\definecolor{grey}{rgb}{0.75,0.75,0.75}
\definecolor{orange}{rgb}{1.0,0.5,0.5}
\definecolor{brown}{rgb}{0.5,0.25,0.0}
\definecolor{pink}{rgb}{1.0,0.5,0.5}
\newenvironment{example}{\begin{varexample}\em}{\em\end{varexample}}
\newcommand{\tmop}[1]{\ensuremath{\operatorname{#1}}}
\newcommand{\nin}{\not\in}
\newcommand{\tmmathbf}[1]{\ensuremath{\boldsymbol{#1}}}
\newcommand{\longrightarrowlim}{\mathop{\longrightarrow}\limits}
\newenvironment{note}{\begin{varnote}\em}{\em\end{varnote}}  
\newcommand{\tmdummy}{$\mbox{}$}
\newcommand{\OO}{\mathcal{O}}
\newcommand{\e}{\'e}
\newcommand{\ii}{\"i}
\newtheorem{definition}{Definition}[section]
\newtheorem{theorem}[definition]{Theorem}
\newtheorem{proposition}[definition]{Proposition}
\newtheorem{varnote}[definition]{Note}
\newtheorem{varexample}[definition]{Example}
\newtheorem{corollary}[definition]{Corollary}
\newtheorem{lemma}[definition]{Lemma}  
\newtheorem{varremark}[definition]{Remark}
\numberwithin{equation}{section}
\author{Kenneth Chan}
\title{Log terminal orders are numerically rational}
\begin{document}

\bibliographystyle{alpha}

\begin{abstract}
Noncommutative surfaces finite over their centres can be realised as 
orders
over surfaces. The aim of this paper is to present a noncommutative
generalisation of rational singularities, which we call numerical
rationality, for such orders. We show that numerical rationality is
independent of the choice of resolution. Our main result is that the log
terminal orders arising from the noncommutative minimal model program
\cite{CI1}, in particular, the canonical orders defined in \cite{CI2}, are
numerically rational. Both of these generalise well known facts about
rational singularities in commutative algebraic geometry.

\end{abstract}
\maketitle

\section{Introduction}

The noncommutative minimal model program (c.f. {\cite{CI1}}) resolves the
birational classification problem for noncommutative surfaces which are finite
over their centres. Such noncommutative surfaces are called orders. Many
concepts from Mori theory carry over to the noncommutative setting. Of
particular interest to us are the noncommutative counterparts of canonical and
log terminal singularities, called canonical and log terminal orders.
Canonical orders were studied extensively in {\cite{CI2}} as invariant rings
and in {\cite{ChanMcKay}} in the context a noncommutative version of the McKay
correspondence. More generally, log terminal orders can be viewed as
noncommutative quotient singularities. Absent from the noncommutative theory
is a notion of rational singularities for orders, and the main objective of
the present paper is to address this point.

Rational singularities are characterised by the property that their cohomology
does not change after passing to the resolution. On surfaces, rational
singularities were studied extensively in {\cite{lipman}}. A theorem of Artin
gives a numerical characterisation of rationality of surface singularities in
terms of the intersection theory on their resolutions (c.f.
{\cite{artinrational}}). Many naturally occurring singularities are rational,
examples include singularities of toric varieties and quotient singularities.
The log terminal singularities of the Mori program are also rational
singularities.

The distinguishing feature of our approach to studying noncommutative
singularities is the use of resolutions of singularities (c.f. {\cite{CI1}},
Corollary 3.6). It provides the necessary technology to investigate
rationality for singularities of orders. We define resolution of singularities
of orders in Section \ref{section-numericalrationality}. The most important
ingredient for resolutions are terminal orders, which are the smooth models in
the noncommutative Mori program. These are defined in {\cite{CI1}}, in terms
of discrepancies, and we review an equivalent definition in Section
\ref{setupforadjform}. Also important is the notion of a birational morphism
of orders, defined in {\cite{CI2}}. If we think of an order as a
noncommutative surface $X$, these concepts allow us to consider a resolution
of singularities as consisting of a smooth noncommutative surface $\tilde{X}$
together with a (noncommutative) birational morphism $\tilde{X}
\longrightarrow X$.

To continue the discussion, we first review the definition of an order over a
surface. Let $Z$ be a normal integral $k$-scheme of dimension $2$. An
$\OO_Z$-order $A$ is a coherent, torsion-free sheaf of $\OO_Z$-algebras which
is generically a $k ( Z )$-central simple algebra. Intuitively, we think of
$A$ as a sheaf of functions on some ``noncommutative space.'' We restrict our
attention mostly to maximal orders; this is analogous to considering normal
varieties.

In section \ref{section-numericalrationality}, we define numerical
rationality (c.f. Definition \ref{numericalrationalitydef}) by mimicking
Artin's numerical condition for rational singularities (c.f.
{\cite{artinrational}}, Proposition 1), which states that $Z$ is a rational
singularity if and only if for some resolution $\tilde{Z} \longrightarrow Z$,
every exceptional effective divisor on $\tilde{Z}$ has positive Euler
characteristic. In the noncommutative generalisation, we are led to consider
the Euler characteristic of $\tilde{A}$ restricted to divisors, and arrive at
a similar condition. We show in Proposition \ref{stabilitynum} that numerical
rationality does not depend on the choice of resolution.

Our main result, found in section
\ref{section-logterminalimpliesnumericallyrational}, is that log terminal
orders are numerically rational (c.f. Theorem
\ref{logterminalimpliesnumericallyrational}). The proof of this theorem
depends on an analysis of the dual resolution graphs of the minimal
resolutions of log terminal singularities. The main tool we use is a formula
which computes the Euler characteristic $\chi ( \tilde{A} \otimes \OO_E )$
where $\tilde{A}$ is a terminal order on $\tilde{Z}$ and $E$ is a divisor
whose underlying variety is projective (c.f Theorem \ref{orderadjunction}). We
learned this from an unpublished manuscript, {\cite{artindejong}}. of M. Artin
and A. J. de Jong. We find that $\chi ( \tilde{A} \otimes \OO_E )$ has a nice
expression which resembles the adjunction formula for $\chi ( \OO_E )$, thus
reducing the computation of $\chi ( \tilde{A} \otimes \OO_E )$ to intersection
theory on $\tilde{Z}$. A proof of this is provided in Section
\ref{section-adjunctionformulafororders}.

\subsection{Definitions}

We set up some basic assumptions which will be in force throughout the paper
and briefly review the essential ingredients from the theory of orders on
surfaces. Let $k$ be an algebraically closed field of characteristic zero. All
objects below will be defined over $k$. We assume, once and for all, that all
orders are normal (c.f. {\cite{CI1}}, Definition 2.3). We will not require the
precise definition for normal orders, it is a technical condition arising from
the fact that {\e}tale localisations of a maximal order are in general not
maximal. The normality criterion is a relaxation of maximality which is stable
under {\e}tale localisations. The important point for us is that the
noncommutative Mori program is carried out for normal orders, and in
particular, resolution of singularities for such orders exist.

The most useful invariant for an order in this paper is its canonical divisor.
Let $A$ be an order on a normal surface $Z$ and $D$ be an irreducible curve on
$Z$. We denote by $A_D$ the localisation of $A$ at $D$ and $J ( A_D )$ its
Jacobson radical. The centre $Z ( A_D / J ( A_D ) )$ of $A_D / J ( A_D )$ is a
product of field extensions of $k ( D )$. Define $e_D = \dim_{k ( D )} Z ( A_D
/ J ( A_D ) )$ the ramification index of $A$ at $D$. Also define the
ramification divisor $\Delta_A$ to be the $\mathbbm{Q}$-divisor $\sum_{D \in
Z^1} \left( 1 - 1 / e_D \right) D$ and the canonical divisor $K_A$ of $A$ by
$K_Z + \Delta_A$. There is a finer invariant for an order called the
ramification data, where instead of just remembering the numbers $e_D$, it
remembers the extension $Z ( A_D / J ( A_D ) )$ of $k ( D )$.

\subsection{Acknowledgements}

I take this opportunity to thank my teacher Daniel Chan, who introduced me to
noncommutative algebraic geometry. He asked me to find out whether there is a
good theory of rational singularities for orders, and this paper grew out of
that investigation. I also thank Colin Ingalls for sending me a preprint on
log terminal orders.

\section{\label{section-numericalrationality}Numerical Rationality}

The aim of this section is to define and study a generalisation of rational
singularities for orders. We will outline some of the difficulties involved in
extending such a notion noncommutatively, and hopefully convince the reader
that our proposed definition is interesting. Many naturally occurring
singularities in birational geometry are rational. Recall that log terminal
surface singularities are simply quotients of $\mathbbm{A}^2$ by a finite
subgroup of $G L_2$, and these are rational singularities. Our point of view
is that a version of rational singularities for orders should include the log
terminal orders arising from the noncommutative minimal model program of
{\cite{CI1}}.

The definition for rational singularity for varieties makes essential use of
the existence of a resolution of singularities $\sigma : \tilde{Z}
\longrightarrow Z$. A resolution $\sigma : \tilde{Z} \longrightarrow Z$ is
rational if $\sigma_{\ast} \OO_{\tilde{Z}} = \OO_Z$ and $R^i \sigma_{\ast}
\OO_{\tilde{Z}} = 0$ for $i > 0$; and $Z$ has rational singularities if there
exists a rational resolution $\sigma : \tilde{Z} \longrightarrow Z$. This
definition does not depend on the resolution, since if $Z$ has a rational
resolution, then all resolutions of $Z$ are rational. Orders on surfaces have
resolutions of singularities (c.f. {\cite{CI1}}, Corollary 3.6); a resolution
of an order $A$ consists of a pair $( \sigma : \tilde{Z} \longrightarrow Z,
\tilde{A} )$, where $\sigma : \tilde{Z} \longrightarrow Z$ is a resolution of
varieties and $\tilde{A}$ is a terminal order on $\tilde{Z}$ with
$\sigma^{\ast} A \subset \tilde{A}$ satisfying
\begin{enumerate}
  \item for any exceptional curve $E$ of $\sigma$, the
  $\mathcal{O}_{\tilde{Z}, E}$-order $\tilde{A}_E$ is maximal
  
  \item for any non-exceptional curve $D$ on $\tilde{Z}$, we have $(
  \sigma^{\ast} A )_D = \tilde{A}_D$.
\end{enumerate}
Note that the terminal order $\tilde{A}$ is not unique, since in general, we
can choose different maximal orders $\tilde{A}_E$ containing $( \sigma^{\ast}
A )_E$ for each $E$, and every such choice $\{ \tilde{A}_E \}_E$ produces a
bona fide resolution of $A$ on $\tilde{Z}$. However, by the Artin-Mumford
sequence, the ramification data of $A$ determines the ramification data of its
resolutions (c.f. {\cite{CI1}}, Lemma 3.4). In particular, the ramification
data of $\tilde{A}$ is independent of the choices of maximal orders at
exceptional curves. Armed with this technology, we can explore what it means
for an order to have rational singularities.

The most na{\ii}ve procedure is to replace $\OO_{\tilde{Z}}$ by $\tilde{A}$
and say that $( \sigma, \tilde{A} )$ is a rational resolution if
$\sigma_{\ast} \tilde{A} = A$ and $R^i \sigma_{\ast} \tilde{A} = 0$ for $i >
0$. We see easily that this runs into problems. Firstly, as the following
example shows, such a definition depends on the choice of maximal orders in
blowing up, hence is not Morita invariant (c.f. {\cite{ChanMcKay}},
Proposition 4.1). From the point of view of noncommutative geometry, this is
rather discouraging.

\begin{example}
  \label{negativeexample}Let $Z = \tmop{Spec} k [ [ u, v ] ]$ and
  \begin{eqnarray*}
    A & = & \left(\begin{array}{cc}
      \OO_Z & \OO_Z\\
      ( u^3 - v^2 ) \OO_Z & \OO_Z
    \end{array}\right)
  \end{eqnarray*}
  be a canonical order of type $B L_1$. The order $A$ is ramified on the curve
  $D$ defined by the equation $u^3 - v^2$ and can be resolved by a single
  blowup $\sigma : \tilde{Z} \longrightarrow Z$ at the cusp $p$ of $D$ (c.f.
  {\cite{CI2}}, Figure 1). Let $\tilde{D}$ denote the strict transform of $D$
  and $E$ be the exceptional curve of $\sigma$. There are three non-isomorphic
  terminal orders on $\tilde{Z}$
  \begin{eqnarray*}
    \tilde{A}_m & = & \left(\begin{array}{cc}
      \OO_{\tilde{Z}} & \OO_{\tilde{Z}} ( m E )\\
      \OO_{\tilde{Z}} ( - \tilde{D} - m E ) & \OO_{\tilde{Z}}
    \end{array}\right)
  \end{eqnarray*}
  for $m = 0, 1, 2$ which are maximal at $E$ and contain $\sigma^{\ast} A$, so
  $R^1 \sigma_{\ast} \tilde{A}_m$ vanishes if and only if $m \neq 2$. 
\end{example}

Moreover, the same example shows that there exists a resolution of a canonical
order that is not rational in the na{\ii}ve sense above. This transgresses our
requirement that the canonical orders (which are log terminal) of the
noncommutative Mori program should be rational. An alternative is to use the
dual formulation of the definition, that is say that a resolution $( \sigma,
\tilde{A} )$ is rational if $\omega_A$ is a Cohen-Macaulay sheaf and
$\sigma_{\ast} \omega_{\tilde{A}} = \omega_A$. Unfortunately, this too is
susceptible to the same objections as above. We note here that due to the
absence of a Grauert-Riemenschneider vanishing theorem, the above two
formulations for rational resolutions for orders are not equivalent.

To get a good notion for rational resolutions for orders, we generalise
Artin's numerical criterion for rational singularities on varieties, which
states that $Z$ has rational singularities if and only if for some resolution
$\sigma : \tilde{Z} \longrightarrow Z$, we have $\chi ( \OO_E ) > 0$ for all
exceptional divisors $E > 0$ on $\tilde{Z}$.

\begin{definition}
  \label{numericalrationalitydef}Let $A$ be a normal order on a surface $Z$. A
  resolution $( \sigma : \tilde{Z} \longrightarrow Z, \tilde{A} )$ of $A$ is
  numerically rational if $\chi ( \tilde{A} \otimes \OO_E ) > 0$ holds for all
  exceptional divisors $E > 0$ on $\tilde{Z}$. The order $A$ is numerically
  rational if every resolution is numerically rational.
\end{definition}

We prove below that the above definition has the nice property that if
numerical rationality holds for some resolution, then it holds for all
resolutions. This generalises the corresponding fact for rational resolutions
for varieties. The adjunction formula proved in Theorem \ref{orderadjunction}
will be used in the proof of the next proposition, and we state the result for
the reader's convenience: let $\tilde{A}$ be a terminal order of rank $n^2$ on
$\tilde{Z}$ and $E$ be an effective exceptional divisor, then
\begin{eqnarray*}
  \chi ( \tilde{A} \otimes_{\tilde{Z}} \OO_E ) & = & - \frac{n^2}{2} (
  K_{\tilde{A}} + E ) E.
\end{eqnarray*}
We see immediately that $\chi ( \tilde{A} \otimes_{\tilde{Z}} \OO_E )$ depends
only on the ramification data. In particular, if $\sigma$ is a rational
resolution, then numerical rationality is a Morita invariant property (c.f.
{\cite{ChanMcKay}}, Proposition 4.1).

\begin{proposition}
  \label{stabilitynum}Let $( \sigma : \tilde{Z} \longrightarrow Z, \tilde{A}
  )$, $( \tau : \tilde{Y} \longrightarrow Z, \tilde{B} )$ be resolutions of
  $A$ and suppose $\tau = \sigma \beta$ where $\beta$ is a blowup at a point
  $p \in \tilde{Z}$. Then $( \sigma, \tilde{A} )$ is numerically rational if
  and only if $( \tau, \tilde{B} )$ is numerically rational. 
\end{proposition}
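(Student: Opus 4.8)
The plan is to follow the classical argument of Artin for rational singularities (c.f. \cite{artinrational}), transplanted to orders via the adjunction formula of Theorem \ref{orderadjunction}. First I would reduce to understanding how $\chi(\tilde{B}\otimes\OO_{E'})$ for exceptional divisors $E'>0$ on $\tilde{Y}$ relates to the quantities $\chi(\tilde{A}\otimes\OO_E)$ on $\tilde{Z}$. Write $F$ for the exceptional curve of the blowup $\beta\colon\tilde{Y}\to\tilde{Z}$ at $p$, so $F^2=-1$, and decompose any exceptional divisor $E'>0$ of $\tau$ uniquely as $E'=\beta^{*}E + cF$ with $E\geq 0$ an exceptional divisor of $\sigma$ (possibly zero) and $c\in\mathbbm{Z}$; here $\beta^{*}E$ denotes the total transform. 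Using the adjunction formula and the projection formula together with $K_{\tilde{Y}}=\beta^{*}K_{\tilde{Z}}+F$ and $K_{\tilde{B}}=\beta^{*}K_{\tilde{A}}+F$ (the latter because the ramification divisor is unchanged under $\beta$ away from $F$, and $\tilde B$ is maximal, hence unramified, along the exceptional $F$ by condition (1) in the definition of a resolution, so $\Delta_{\tilde B}$ receives no contribution from $F$), one computes
\begin{eqnarray*}
 \chi(\tilde{B}\otimes\OO_{E'}) & = & -\frac{n^2}{2}(K_{\tilde{B}}+E')E' \\
  & = & -\frac{n^2}{2}\Bigl((\beta^{*}(K_{\tilde{A}}+E)+(c+1)F)(\beta^{*}E+cF)\Bigr)\\
  & = & -\frac{n^2}{2}\Bigl((K_{\tilde{A}}+E)E - c(c+1)\Bigr)\\
  & = & \chi(\tilde{A}\otimes\OO_E) + \frac{n^2}{2}c(c+1).
\end{eqnarray*}
Since $c(c+1)\geq 0$ for every integer $c$, this identity shows $\chi(\tilde{B}\otimes\OO_{E'})\geq \chi(\tilde{A}\otimes\OO_{\beta_{*}E'})$, where $\beta_{*}E'=E$ is the pushforward (the strict-transform part).

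\textbf{The two directions.}
For the direction ``$(\sigma,\tilde{A})$ numerically rational $\Rightarrow$ $(\tau,\tilde{B})$ numerically rational'': given $E'>0$ exceptional for $\tau$, write $E'=\beta^{*}E+cF$. If $E\neq 0$ then $\chi(\tilde{A}\otimes\OO_E)>0$ by hypothesis and $\tfrac{n^2}{2}c(c+1)\geq 0$, so $\chi(\tilde{B}\otimes\OO_{E'})>0$. If $E=0$ then $E'=cF$ with $c\geq 1$ (for $E'$ to be effective and nonzero), and the formula gives $\chi(\tilde{B}\otimes\OO_{cF})=\tfrac{n^2}{2}c(c+1)>0$ directly. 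This settles one implication without using the hypothesis on $F$ at all.

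\textbf{The harder converse.}
The converse --- $(\tau,\tilde{B})$ numerically rational $\Rightarrow$ $(\sigma,\tilde{A})$ numerically rational --- is the main obstacle, exactly as in Artin's setting, because a priori an effective exceptional divisor $E>0$ on $\tilde{Z}$ need not be of the form $\beta_{*}E'$ for an \emph{effective} $E'$ on $\tilde{Y}$: the naive lift $\beta^{*}E$ has fractional or, rather, non-minimal behaviour only through $F$, but in fact $\beta^{*}E$ \emph{is} effective, so taking $E'=\beta^{*}E$ (i.e. $c=0$) gives $\chi(\tilde{B}\otimes\OO_{\beta^{*}E})=\chi(\tilde{A}\otimes\OO_E)$, and the right-hand side is $>0$ by hypothesis applied to $E'=\beta^{*}E>0$. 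So the implication is in fact immediate from the displayed identity with $c=0$. The one genuine subtlety to nail down carefully is the claim $K_{\tilde{B}}=\beta^{*}K_{\tilde{A}}+F$ with coefficient exactly $+1$ on $F$: this requires that $\tilde B$ be \emph{unramified} along $F$, which is where condition (1) (maximality of $\tilde B_F$, hence, since $F$ is a smooth curve on a smooth surface and $\tilde B$ terminal, $e_F=1$) enters, together with the fact (\cite{CI1}, Lemma 3.4, via the Artin--Mumford sequence) that the ramification data of a resolution over $\tilde Z$ is pinned down. I would verify this, then assemble the two implications above into the statement of the Proposition. A remark worth including: combined with the standard fact that any two resolutions of $A$ are dominated by a common one obtained by a sequence of point blowups, this Proposition yields that numerical rationality is independent of the resolution, which is Proposition \ref{stabilitynum}'s intended application.
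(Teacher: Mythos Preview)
Your overall strategy is exactly the paper's: apply Theorem \ref{orderadjunction}, decompose an exceptional divisor on $\tilde Y$ as $\beta^{*}\tilde E + mF$, and compare the two Euler characteristics. The logical structure of both implications is also the same. The problem is your computation of $K_{\tilde B}$.

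You assert $K_{\tilde B}=\beta^{*}K_{\tilde A}+F$, justified by ``$\tilde B$ is maximal along $F$, hence unramified, so $\Delta_{\tilde B}$ receives no contribution from $F$''. Two things go wrong here. First, maximality of $\tilde B_F$ does \emph{not} force $e_F=1$: when $p$ is a node of the ramification divisor $R=\operatorname{supp}\Delta_{\tilde A}$, the Artin--Mumford sequence (\cite{CI1}, Lemma 3.4) shows $\tilde B$ is ramified along $F$ with index $e_F=e_2>1$ (the smaller of the two indices meeting at $p$). Second, even when $e_F=1$ (e.g.\ $p$ on a single ramification curve $D$ of index $e_1$), you still do not get coefficient $+1$: one has $\beta^{*}\Delta_{\tilde A}\neq\Delta_{\tilde B}$ because $\beta^{*}D=\tilde D+F$ contributes $(1-1/e_1)F$ to $\beta^{*}\Delta_{\tilde A}$ while $\Delta_{\tilde B}$ has no $F$-term. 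Carrying this through, in every case
\[
K_{\tilde B}=\beta^{*}K_{\tilde A}+\tfrac{1}{e_1}\,F
\]
for a suitable integer $e_1\geq 1$ (with $e_1=1$ precisely when $p\notin R$), and hence
\[
\chi(\tilde B\otimes\OO_E)=\chi(\tilde A\otimes\OO_{\tilde E})+\tfrac{n^2}{2}\,m\!\left(m+\tfrac{1}{e_1}\right),
\]
not $\tfrac{n^2}{2}m(m+1)$. The paper obtains this via a short case analysis on whether $p$ lies off $R$, on a smooth point of $R$, or at a node of $R$. Once you have the correct formula, your argument goes through verbatim: $m(m+1/e_1)\geq 0$ for every integer $m$, with equality only at $m=0$ or at $(m,e_1)=(-1,1)$, neither of which yields a nonzero effective $E$ when $\tilde E=0$.
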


\begin{proof}
  Since both $\tilde{A}$ and $\tilde{B}$ are terminal orders, we can use
  Theorem \ref{orderadjunction} to compute their Euler characteristics when
  restricted to divisors. Let $R$ denote the ramification divisor of
  $\tilde{A}$ on $\tilde{Z}$. We have two cases to consider, depending on
  whether $p$ belongs to the singular locus of $\tmop{supp} R$.
  
  We denote by $E_0 = \tmop{Ex} ( \beta )$ the exceptional curve on
  $\tilde{Y}$ contracted by $\beta$. If $p$ is not in the singular locus of
  $\tmop{supp} R$, we see that $E_0$ is unramified. If, in addition, $p \in
  \tmop{supp} R$ then $\beta^{\ast} \Delta_{\tilde{A}} - \left( 1 - 1 / e_1
  \right) E_0 = \Delta_{\tilde{B}}$ where $e_1$ is the ramification index of
  the irreducible component of $\tmop{supp} R$ containing $p$. If $p \nin
  \tmop{supp} R$, then $\beta^{\ast} \Delta_{\tilde{A}} = \Delta_{\tilde{B}}$.
  For convenience, we will write this as $\beta^{\ast} \Delta_{\tilde{A}} - (
  1 - 1 / e_1 ) E_0 = \Delta_{\tilde{B}}$ for $e_1 = 1$.
  
  Now suppose $p \in ( \tmop{supp} R )_{\tmop{sing}}$. Note that since
  $\tilde{A}$ is terminal, $R$ only has nodal singularities. Let $R_1, R_2$ be
  irreducible components of $R$ intersecting transversely at $p$, and denote
  by $e_i$ the ramification index of $R_i$. Then $e_1 = s e_2$ for some
  integer $s$. The Artin-Mumford sequence can be used to show that $\tilde{A}$
  is totally ramified at $E_0$ with $e_0 = e_2$ (c.f. Lemma 3.4,
  {\cite{CI1}}). Hence $\beta^{\ast} \Delta_{\tilde{A}} - \left( 1 - 1 / e_1
  \right) E_0 = \Delta_{\tilde{B}}$.
  
  We can write a general effective divisor $E$ on $\tilde{Y}$ as
  $\beta^{\ast} \tilde{E} + m E_0$ where $\tilde{E}$ is some effective divisor
  on $\tilde{Z}$ and $m \in \mathbbm{Z}$. Since $\beta$ is the blowup of a
  smooth point, we know that $\beta^{\ast} K_{\tilde{Z}} + E_0 =
  K_{\tilde{Y}}$. In each case, we get the following,
  \begin{eqnarray*}
    \chi ( \tilde{B} \otimes_{\tilde{Y}} \mathcal{O}_E ) & = & \chi (
    \tilde{A} \otimes_{\tilde{Z}} \mathcal{O}_{\tilde{E}} ) + \frac{n^2}{2} m
    \left( m + \frac{1}{e_1} \right)
  \end{eqnarray*}
  for some $e_1 > 0$. If $\chi ( \tilde{B} \otimes_{\tilde{Y}} \mathcal{O}_E )
  > 0$ for all $E > 0$, then putting $m = 0$ gives $\chi ( \tilde{A}
  \otimes_{\tilde{Z}} \mathcal{O}_{\tilde{E}} ) > 0$ for all $\tilde{E} > 0$.
  Conversely, if $\chi ( \tilde{A} \otimes_{\tilde{Z}} \mathcal{O}_{\tilde{E}}
  )$ is positive for all $\tilde{E} > 0$, then we can conclude that $\chi (
  \tilde{B} \otimes_{\tilde{Y}} \mathcal{O}_E ) \geqslant 0$ for all
  $\tilde{E} > 0$ and $m \in \mathbbm{Z}$. To see that $\chi ( \tilde{B}
  \otimes_{\tilde{Y}} \mathcal{O}_E ) > 0$, we find that the only nontrivial
  solution of $\chi ( \tilde{B} \otimes_{\tilde{Y}} \mathcal{O}_E ) = 0$
  occurs when $e_1 = 1$, $\tilde{E} = 0$ and $m = - 1$. This does not
  correspond to an effective divisor on $\tilde{Y}$. Hence $\chi ( \tilde{B}
  \otimes_{\tilde{Y}} \mathcal{O}_E ) > 0$ for all $E > 0$. 
\end{proof}

We say that a resolution $( \sigma : \tilde{Z} \longrightarrow Z, \tilde{A} )$
of an order $A$ is minimal if the canonical divisor $K_{\tilde{A}}$ is
$\sigma$-nef (c.f {\cite{CI1}}, Theorem 3.10). 

\begin{corollary}
  \label{onlyminimalneeded}An order $A$ is numerically rational if and only if
  any resolution is numerically rational.
\end{corollary}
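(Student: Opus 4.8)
The plan is to deduce this from Proposition \ref{stabilitynum}. Since resolutions of $A$ exist (\cite{CI1}, Corollary 3.6), the implication ``$A$ numerically rational $\Rightarrow$ any prescribed resolution is numerically rational'' is immediate from Definition \ref{numericalrationalitydef}, and only the converse requires argument: if some resolution of $A$ is numerically rational, then every resolution is. So fix a numerically rational resolution $(\sigma : \tilde{Z} \longrightarrow Z, \tilde{A})$ and an arbitrary resolution $(\tau : \tilde{Y} \longrightarrow Z, \tilde{B})$ of $A$. By \cite{CI1}, Theorem 3.10, $A$ has a minimal resolution $(\sigma_0 : \tilde{Z}_0 \longrightarrow Z, \tilde{A}_0)$, and every resolution of $A$ factors through it; as a proper birational morphism of smooth surfaces is a finite composition of blowups at closed points, both $\tilde{Z}$ and $\tilde{Y}$ are reached from $\tilde{Z}_0$ by such compositions, say $\tilde{Z}_0 \leftarrow \tilde{Z}_1 \leftarrow \cdots \leftarrow \tilde{Z}_\ell = \tilde{Z}$ and similarly for $\tilde{Y}$.

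On each intermediate surface $\tilde{Z}_i$ choose a maximal order over the pullback of $A$ at every exceptional curve; this produces a resolution $(\sigma_i, \tilde{A}_i)$ of $A$, using that a terminal order remains terminal after blowing up a point (with such a choice) and that conditions (1) and (2) in the definition of a resolution persist. Since the ramification data of any resolution of $A$ on $\tilde{Z}_i$ is forced by that of $A$ through the Artin--Mumford sequence (\cite{CI1}, Lemma 3.4), each consecutive pair $(\sigma_i, \tilde{A}_i)$, $(\sigma_{i+1}, \tilde{A}_{i+1})$ is exactly the configuration of Proposition \ref{stabilitynum}: two resolutions of $A$, one obtained from the other by a blowup at a point. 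Iterating that proposition up the chain shows $(\sigma, \tilde{A})$ is numerically rational if and only if $(\sigma_0, \tilde{A}_0)$ is; running the same argument down the other chain, $(\tau, \tilde{B})$ is numerically rational if and only if $(\sigma_0, \tilde{A}_0)$ is. Hence $(\tau, \tilde{B})$ is numerically rational, and as it was arbitrary, $A$ is numerically rational. In particular numerical rationality can be tested on the minimal resolution alone.

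The step I expect to demand the most care --- everything else being either cited or routine bookkeeping --- is the construction of the intermediate resolutions $(\sigma_i, \tilde{A}_i)$: one must check that choosing maximal orders at the newly created exceptional curves keeps the order terminal at each stage and leaves conditions (1) and (2) intact, so that Proposition \ref{stabilitynum} is legitimately applicable at every link of the two chains. This rests on the structural results of \cite{CI1} on resolutions of orders and on the behaviour of terminal orders under blowing up a point; granting these, the corollary follows formally.
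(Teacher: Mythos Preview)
Your argument is correct and rests on the same two ingredients as the paper's proof: Proposition \ref{stabilitynum} applied along a chain of single blowups, together with the fact that numerical rationality depends only on the ramification data (hence is insensitive to which terminal order one chooses on a fixed centre). The organisation, however, is dual to the paper's. You fix a minimal resolution and climb \emph{up} to an arbitrary one by blowing up and choosing maximal orders at each new exceptional curve; the paper instead starts from an arbitrary resolution and contracts \emph{down} to a minimal one, taking the reflexive hull of the pushforward at each step (this is exactly what \cite{CI1}, Theorem 3.10 supplies). The descent has the advantage that the intermediate terminal orders are produced canonically by the contraction theorem, so the step you flag as delicate---checking that the intermediate $(\sigma_i,\tilde{A}_i)$ are genuine resolutions---never arises. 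The paper then closes with \cite{CI2}, Theorem 2.15 (minimal resolutions share centres and ramification data) where you invoke the Artin--Mumford sequence more generally; both suffice. One small point: your appeal to \cite{CI1}, Theorem 3.10 for ``every resolution factors through the minimal one'' is really the combination of that contraction theorem with the uniqueness of the minimal centre from \cite{CI2}, Theorem 2.15, so it would be cleaner to cite both.
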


\begin{proof}
  Suppose $( \sigma, \tilde{A} )$ is a resolution of $A$. If $( \sigma,
  \tilde{A} )$ is not minimal, then there exists a $K_{\tilde{A}}$-negative
  curve $E$ with $E^2 < 0$. By {\cite{CI1}}, Theorem 3.10, we can factor
  $\sigma = \tau' \beta'$ through a blowup $\beta'$ at a point which contracts
  $E$, and there exists a terminal order $A'_1$ such that $( \tau', A'_1 )$ is
  a resolution of $A$. The terminal order $A_1'$ is obtained by taking the
  reflexive hull of $\beta'_{\ast} \tilde{A}$. Repeating this until we reach a
  minimal resolution allows us to factor $\sigma = \tau \beta$ where $\beta$
  is a sequence of blowups centred at closed points, and obtain a terminal
  order $A_1$ such that $( \tau, A_1 )$ is a minimal resolution of $A$.
  
  By Proposition \ref{stabilitynum}, $( \sigma, \tilde{A} )$ is numerically
  rational if and only if $( \tau, A_1 )$ is numerically rational. According
  to Theorem 2.15 of {\cite{CI2}}, minimal resolutions of $A$ have the same
  centres and ramification data. Since numerical rationality depends only on
  the ramification data, the result follows.
\end{proof}

Recall that canonical orders have crepant minimal resolutions (c.f.
{\cite{CI2}}, Proposition 6.1), that is, if $( \sigma, \tilde{A} )$ is a
minimal resolution of the canonical order $A$, then $K_{\tilde{A}} =
\sigma^{\ast} K_A$. It is easy to show that canonical orders are numerically
rational.

\begin{corollary}
  \label{canonicalimpliesnumericallyrational}Let $( \sigma : \tilde{Z}
  \longrightarrow Z, \tilde{A} )$ be a crepant resolution of the $\OO_Z$-order
  $A$. Then $( \sigma, \tilde{A} )$ is a numerically rational resolution. In
  particular, canonical orders are numerically rational. 
\end{corollary}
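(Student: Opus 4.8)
The plan is to exploit the adjunction formula from Theorem~\ref{orderadjunction}, which reduces everything to intersection theory on $\tilde{Z}$, together with the crepancy hypothesis $K_{\tilde{A}} = \sigma^{\ast} K_A$. First I would take an arbitrary effective exceptional divisor $E > 0$ on $\tilde{Z}$ and write, using the adjunction formula,
\begin{eqnarray*}
  \chi ( \tilde{A} \otimes_{\tilde{Z}} \OO_E ) & = & - \frac{n^2}{2} ( K_{\tilde{A}} + E ) E \;=\; - \frac{n^2}{2} ( \sigma^{\ast} K_A + E ) E.
\end{eqnarray*}
Since $E$ is $\sigma$-exceptional, the projection formula gives $( \sigma^{\ast} K_A ) E = 0$, so this collapses to $\chi ( \tilde{A} \otimes \OO_E ) = - \frac{n^2}{2} E^2$. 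Thus it suffices to show $E^2 < 0$ for every effective exceptional divisor, and $n^2/2 > 0$ finishes the inequality.

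The remaining point is the classical fact that the intersection form on the exceptional locus of a resolution of a normal surface singularity is negative definite (Mumford), so $E^2 < 0$ for all $E > 0$ supported on the exceptional fibres; I would simply cite this. Combining, $\chi ( \tilde{A} \otimes \OO_E ) > 0$ for all exceptional $E > 0$, so by Definition~\ref{numericalrationalitydef} the resolution $( \sigma, \tilde{A} )$ is numerically rational.

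For the final assertion, recall from {\cite{CI2}}, Proposition~6.1, that a canonical order $A$ admits a crepant minimal resolution $( \sigma, \tilde{A} )$. By the first part this resolution is numerically rational, and by Corollary~\ref{onlyminimalneeded} numerical rationality of one resolution implies it for every resolution, so $A$ is numerically rational. The only mild subtlety — hardly an obstacle — is making sure the projection formula applies in the $\mathbbm{Q}$-divisor setting, since $K_A = K_Z + \Delta_A$ is only a $\mathbbm{Q}$-divisor; but $(\sigma^{\ast} D) E = 0$ holds for any $\mathbbm{Q}$-Cartier pullback and any exceptional curve $E$, so this causes no trouble.
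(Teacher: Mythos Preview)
Your proof is correct and follows the same approach as the paper: apply the adjunction formula, use crepancy and $(\sigma^{\ast} K_A)E = 0$ to reduce $\chi(\tilde{A}\otimes\OO_E)$ to $-n^2 E^2/2$, then invoke negative definiteness of the exceptional intersection form. The paper's argument is identical but terser, leaving the projection-formula step and the appeal to Corollary~\ref{onlyminimalneeded} implicit where you spell them out.
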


\begin{proof}
  If $( \sigma, \tilde{A} )$ is crepant, then $\chi ( \tilde{A}
  \otimes_{\tilde{Z}} \OO_E ) = - n^2 E^2 / 2$, which is positive for any
  exceptional divisor $E$, so $( \sigma, \tilde{A} )$ is numerically rational.
  Let $A$ be a canonical order. The minimal resolution of a canonical order
  $A$ is crepant, hence $A$ is numerically rational.
\end{proof}

Our definition of numerical rationality is weaker than the na{\ii}ve
generalisation of rational resolutions to orders: if $R^1 \sigma_{\ast}
\tilde{A} = 0$, then we can see by taking the long exact sequence in
cohomology associated to the short exact sequence
\[ 0 \longrightarrow \tilde{A} ( - E ) \longrightarrow \tilde{A}
   \longrightarrow \tilde{A} \otimes_{\tilde{Z}} \OO_E \longrightarrow 0 \]
that $h^1 ( \tilde{A} \otimes_{\tilde{Z}} \OO_E ) = 0$. Since $\OO_E \subset
\tilde{A} \otimes_{\tilde{Z}} \OO_E$, we have $h^0 ( \tilde{A}
\otimes_{\tilde{Z}} \OO_E ) > 0$ hence $\chi ( \tilde{A} \otimes_{\tilde{Z}}
\OO_E ) > 0$. Example \ref{negativeexample} and Corollary
\ref{canonicalimpliesnumericallyrational} shows that numerical rationality is
strictly weaker than the na{\ii}ve generalisation of rationality.

We conclude this section with an example of an order which is not numerically
rational.

\begin{example}
  Consider the simple elliptic singularity of type $\tilde{E}_6$, which is
  given by the equation $f_{\lambda} = 0$ where $f_{\lambda} = u^3 + v^3 + w^3
  + \lambda u v w$ (c.f. {\cite{dimca}}, (4.9)) for some $\lambda \in k$. We
  construct below an order $A$ with centre $Z = \tmop{Spec} k [ [ u, v, w ] ]
  / ( f_{\lambda} )$ whose minimal resolution is not numerically rational. Let
  $A$ be the $k [ [ u, v, w ] ] / ( f_{\lambda} )$-algebra generated by $x, y$
  with relations $x^2 = u$, $y^2 = v$ and $x y + y x = 0$. Then $A$ is a
  maximal order of rank $4$ over $Z$.
  
  Let $\sigma : \tilde{Z} \longrightarrow Z$ be the minimal resolution of
  $Z$, then $E = \tmop{Ex} ( \sigma )$ is an elliptic curve with $E^2 = - 3$.
  Let $( \sigma, \tilde{A} )$ be a blowup of $A$. Since $A$ is maximal,
  $\tilde{A}$ is a maximal order on $\tilde{Z}$ containing $\sigma^{\ast} A$.
  A local computation shows that $( \sigma^{\ast} A )_E$ is contained in a
  unique maximal order, hence $\tilde{A}$ is the unique blowup of $A$ along
  $\sigma$. We can describe $\tilde{A}$ as follows: on the open affine set $U
  = \tmop{Spec} k [ [ u, v, w ] ] [ v / u, w / u ] / ( f_{\lambda} u^{- 3} )$,
  we have
  \begin{eqnarray*}
    \tilde{A} ( U ) & = & \sigma^{\ast} A ( U ) \left\langle x y u^{- 1}
    \right\rangle
  \end{eqnarray*}
  and similarly for the other standard open affine sets $V = \tmop{Spec} k [ [
  u, v, w ] ] [ u / v, w / v ] / ( f_{\lambda} )$ and $W = \tmop{Spec} k [ [
  u, v, w ] ] [ u / w, v / w ] / ( f_{\lambda} )$ of $\tilde{Z}$. One can
  check that $\tilde{A}$ is a terminal order ramified on $E$ and two divisors
  $D_1, D_2$ transverse to $E$, each with ramification index $2$. The
  equations for $D_1$ and $D_2$ on $W$ are $u / w = 0$ and $v / w = 0$.
  
  We show that the resolution $( \sigma, \tilde{A} )$ is not numerically
  rational. The simple elliptic singularity is log canonical, so
  $K_{\tilde{Z}} = \sigma^{\ast} K_Z - E$. This gives $K_{\tilde{A}} =
  \sigma^{\ast} K_Z - E / 2 + D / 2$ so by the adjunction formula for orders,
  we have
  \begin{eqnarray*}
    \chi ( \tilde{A} \otimes_{\tilde{Z}} \OO_{m E} ) & = & 3 m \left( 2 m - 3
    \right)
  \end{eqnarray*}
  which is negative for $m = 1$. 
\end{example}

\section{\label{section-logterminalimpliesnumericallyrational}Log terminal
implies numerically rational}

In this section, we prove a noncommutative version of the following result:
log terminal singularities are rational singularities. There is a notion of
log terminal orders developed in the context of the noncommutative Mori theory
of {\cite{CI1}}, and the analogue for rational singularities is provided by
our notion of numerical rationality (c.f. Definition
\ref{numericalrationalitydef}). The noncommutative version of the above result
has the following pleasant statement.

\begin{theorem}
  \label{logterminalimpliesnumericallyrational}If $A$ is a log terminal order
  on $Z$, then $A$ is numerically rational.
\end{theorem}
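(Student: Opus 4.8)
The plan is to reduce the statement to a family of intersection inequalities on the minimal resolution and then verify them using the explicit dual graphs of log terminal singularities together with the ramification data carried by the resolved order.

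\emph{Reduction to the minimal resolution.} By Corollary \ref{onlyminimalneeded} it is enough to produce one numerically rational resolution, and by \cite{CI1} a log terminal order admits a \emph{minimal} resolution $(\sigma:\tilde Z\to Z,\tilde A)$, one for which $K_{\tilde A}$ is $\sigma$-nef; I would work with this resolution. Let $E_1,\dots,E_r$ be the $\sigma$-exceptional curves. By the classification of log terminal orders in \cite{CI1}, the dual graph of $\sigma$ is a tree of smooth rational curves of one of an explicit list of shapes (cyclic, dihedral, and the three exceptional types), and $\tilde A$ is ramified along a prescribed sub-configuration of these together with finitely many horizontal divisors, with ramification indices determined by the type.

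\emph{Translation into intersection theory.} Since $\tilde A$ is terminal, Theorem \ref{orderadjunction} gives, for every effective exceptional divisor $E>0$,
\[
  \chi(\tilde A\otimes\OO_E)=-\tfrac{n^2}{2}(K_{\tilde A}+E)\,E ,
\]
so $A$ is numerically rational precisely when $(K_{\tilde A}+E)E<0$ for all such $E$. Expanding this quadratic expression also yields the additivity relation $\chi(\tilde A\otimes\OO_{E+E'})=\chi(\tilde A\otimes\OO_E)+\chi(\tilde A\otimes\OO_{E'})-n^2(E\cdot E')$, which is the order-theoretic analogue of the identity Artin uses for varieties. Writing $E=\sum_i m_iE_i$ and $K_{\tilde A}=\sigma^{\ast}K_A+\sum_i a_iE_i$, the log terminal hypothesis says $a_i>-1$ for all $i$, and since $\sigma^{\ast}K_A\cdot E_j=0$ the inequality to be proved becomes $\sum_i(a_i+m_i)(E_i\cdot E)<0$.

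\emph{Primitive cycles and propagation.} I would first settle the case $E=E_i$: here the inequality reads $K_{\tilde A}\cdot E_i+E_i^2<0$, equivalently $\tfrac12\,\Delta_{\tilde A}\cdot E_i<1$ after subtracting the ordinary adjunction identity $K_{\tilde Z}\cdot E_i+E_i^2=-2$; since $\tilde A$ is terminal its ramification curve has only nodal singularities, which limits how much ramification crosses $E_i$, and combined with the bounded ramification of $\tilde A$ along $E_i$ the log terminal constraint on the indices makes the bound strict. To pass from primitive cycles to an arbitrary $E>0$, suppose for contradiction that $\chi(\tilde A\otimes\OO_E)\le 0$ for some effective exceptional $E$, and pick $E$ minimal with this property. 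For each component $E_j\le E$, the additivity relation together with minimality and the primitive-cycle case forces $\chi(\tilde A\otimes\OO_{E-E_j})>0$ and $\chi(\tilde A\otimes\OO_{E_j})>0$, hence $(E-E_j)\cdot E_j>0$ for \emph{every} component $E_j$ of $E$. On the trees occurring in the log terminal classification this is extremely restrictive, and a Laufer-type analysis of such cycles, now weighted by the ramification data of $\tilde A$, should pin $E$ down to a short explicit list of cycles for which $\chi(\tilde A\otimes\OO_E)>0$ can be checked directly from the adjunction formula, giving the contradiction.

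\emph{The main obstacle.} The essential difficulty is this last step, and the reason it cannot be done formally is that the bound $a_i>-1$ by itself is not enough: in $\sum_i(a_i+m_i)(E_i\cdot E)$ the intersection numbers $E_i\cdot E$ change sign as the support of $E$ varies, so there is no termwise estimate, and using that $K_{\tilde A}$ is $\sigma$-nef only supplies an inequality of the wrong sign. One really has to combine Artin's combinatorial study of fundamental cycles of (log terminal) surface singularities with the extra ramification weighting coming from $\tilde A$, treating the cyclic case first, which is Hirzebruch--Jung continued-fraction bookkeeping, and then the dihedral and exceptional cases on their respective trees; this is the part for which the explicit list of dual graphs from \cite{CI1} is indispensable.
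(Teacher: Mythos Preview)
Your reduction to the minimal resolution and the translation via the adjunction formula are correct and match the paper. The primitive case $E=E_i$ is also correct, though cleaner via the discrepancy expression than via $\Delta_{\tilde A}$: writing $K_{\tilde A}=\sigma^*K_A+\sum_j a_jE_j$ one has $(K_{\tilde A}+E_i)\cdot E_i=(1+a_i)E_i^2+\sum_{j\ne i}a_j(E_j\cdot E_i)<0$ since $a_i>-1$, $E_i^2<0$, and $a_j\le 0$ (the last from $\sigma$-nefness of $K_{\tilde A}$, cf.\ \cite{KoMo}, Lemma~3.41).

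The gap is the propagation step. You correctly deduce that a minimal counterexample $E$ satisfies $(E-E_j)\cdot E_j>0$ for every component $E_j$, but then defer to an unspecified ``Laufer-type analysis'' over a classification of log terminal orders, which you yourself flag as the main obstacle. This is not merely unfinished bookkeeping: \cite{CI1} does not supply a classification of log terminal orders in the form you invoke, and your diagnosis that ``the bound $a_i>-1$ by itself is not enough'' is in fact wrong---the paper shows it \emph{is} enough (together with $a_j\le 0$), provided one organises the argument differently.

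The paper avoids any classification of orders. It factors $\sigma=\pi_0\pi$ through the minimal resolution $\pi_0:Z_0\to Z$ of the \emph{centre}, and a change of basis $\{N_i\}$ of the exceptional lattice (Proposition~\ref{diagonalisationlemma}) splits $f_{\sigma,K_{\tilde A}}(E)$ into $f_{\pi_0,\pi_*K_{\tilde A}}(\pi_*E)$ plus terms that are nonnegative because $0\le N_{r+j}K_{\tilde A}<1$ (Proposition~\ref{smoothterms}); this disposes of all the extra blowups the order forces beyond $Z_0$. One is left with a function $g(E)=-E^2+\ell(E)$ on the exceptional lattice of $Z_0$ with $\ell(E_i)\le 0$. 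The key reduction (Theorem~\ref{technicaltheorem}) is that such a $g$ is positive on all effective $E$ iff it is positive on the \emph{special} divisors, namely the numerical cycles of connected subconfigurations; on these, the bound $a_i>-1$ gives positivity in one line (Proposition~\ref{logterminalordersandspecialdivisors}). The reduction to special divisors is proved by induction on the multiplicity $m(D)$ of the contracted singularity via a decomposition $D=D_1+D_2$ with $D_1D_2\le 0$ (Proposition~\ref{decompositionprop}), and only here does any classification enter---of log terminal \emph{surface} singularities, not orders, and only through the mild Proposition~\ref{numericalcycleoflogterminalsingularities}.
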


Note that if $A$ is log terminal, the associated log pair $( Z, \Delta_A )$ of
$A$ is klt ({\cite{CI1}}, Proposition 3.15). It follows from {\cite{KoMo}},
Corollary 2.35 that $Z$ has log terminal singularities. So we assume below
that $Z$ is the spectrum of a local ring with log terminal singularities. To
prove that log terminal orders are numerically rational, by Corollary
\ref{onlyminimalneeded} we need only study their minimal resolutions. Since
all minimal resolutions have the same ramification data ({\cite{CI2}}, Theorem
2.5) we will use the following characterisation of log terminal orders, which
is equivalent to the definition in {\cite{CI1}}. Let $A$ be an order on $Z$
and $( \sigma : Z' \longrightarrow Z, A' )$ be any minimal resolution. We can
write
\begin{eqnarray*}
  K_{A'} & = & \sigma^{\ast} K_A + \sum_i a_i E_i
\end{eqnarray*}
where the $E_i$'s range over the exceptional curves on $Z'$. Then $A$ is log
terminal if and only if $\min \{ a_i e_i \} > - 1$, where $e_i$ is the
ramification index of $A'$ at $E_i$.

Let $A$ be a log terminal order on $Z$ and $( \sigma : \tilde{Z}
\longrightarrow Z, \tilde{A} )$ be any resolution. Denote by $\tmmathbf{E}$
the subgroup $\tmop{Pic} \tilde{Z}$ generated by the exceptional curves. The
intersection product on exceptional curves is well defined, and it endows
$\tmmathbf{E}$ with the structure of a quadratic $\mathbbm{Z}$-module. Given a
$D \in \tmmathbf{E} \otimes_{\mathbbm{Z}} \mathbbm{Q}$, we define the function
$f_{\sigma, D} : \tmmathbf{E} \longrightarrow \mathbbm{Q}$ by $E \longmapsto -
( D + E ) E$. Let $\tmmathbf{E}^+ = \{ \sum a_i E_i \in \tmmathbf{E} \mid a_i
\geqslant 0 \}$ denote the effective cone of $\tmmathbf{E}$. By Theorem
\ref{orderadjunction}, $( \sigma, \tilde{A} )$ is a numerically rational
resolution if and only if $f_{\sigma, K_{\tilde{A}}} ( E ) > 0$ for all $E \in
\tmmathbf{E}^+ \backslash \{ 0 \}$.

The function $f_{\sigma, K_{\tilde{A}}}$ is the sum of a positive definite
quadratic form $q ( E ) = - E^2$ and a linear function $\ell ( E ) = - D E$ on
$\tmmathbf{E}$. The log terminal condition on $K_{\tilde{A}}$ puts constraints
on the coefficients of $\ell$. To get some information out of these
constraints, it is profitable to choose a different $\mathbbm{Z}$-basis $\{
N_i \}$ for $\tmmathbf{E}$. We define $N_i$ as follows: for an exceptional
curve $E_i$ on $\tilde{Z}$, there is a unique factorisation $\tilde{Z}
\longrightarrowlim^{\tau_i} Z' \longrightarrowlim^{\tau'_i} Z$ of $\sigma$
satisfying the following properties
\begin{enumerate}
  \item $Z'$ is smooth,
  
  \item $E_i$ is not contracted by $\tau_i$, so $\tau_{i \ast} E_i$ is a curve
  on $Z'$, and
  
  \item there are no $( - 1 )$-curves on $Z'$ except for possibly $\tau_{i
  \ast} E_i$.
\end{enumerate}
Let $N_i = \tau^{\ast}_i \tau_{i \ast} E_i$. It is easy to see that one
obtains the factorisation above by sequentially contracting $( - 1 )$-curves
except for the pushforwards of $E_i$.

We will adopt the following notation for the exceptional curves on
$\tilde{Z}$: the resolution $\sigma$ factors through a minimal resolution
$\pi_0 : Z_0 \longrightarrow Z$ of $Z$ so that $\sigma = \pi_0 \pi$, we denote
by $E_1, \ldots, E_r$ the exceptional curves not contracted by $\pi$ and label
the rest by $E_{r + 1}, \ldots, E_{r + \ell}$.

\begin{proposition}
  \label{diagonalisationlemma}Let $( \sigma : \tilde{Z} \longrightarrow Z,
  \tilde{A} )$ be a minimal resolution of an order $A$ on $Z$ and let $\sigma
  = \pi_0 \pi$ be as above. Then
  \begin{eqnarray}
    f_{\sigma, K_{\tilde{A}}} ( E ) & = & f_{\pi_0, \pi_{\ast} K_{\tilde{A}}}
    ( \pi_{\ast} E ) + \left( \sum_{j = 1}^{\ell} \left( N_{r + j} E \right)
    N_{r + j} \right) ( E + K_{\tilde{A}} ) .  \label{diagonalisationeq}
  \end{eqnarray}
\end{proposition}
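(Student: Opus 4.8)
The plan is to decompose $f_{\sigma,K_{\tilde A}}$ along the orthogonal splitting of $\tmmathbf{E}\otimes_{\mathbbm{Z}}\mathbbm{Q}$ determined by $\pi$, and then to identify the two terms of $(\ref{diagonalisationeq})$ via the projection formula. Throughout write $D=K_{\tilde A}$, regarded as a $\mathbbm{Q}$-divisor on $\tilde Z$; only its intersection numbers with exceptional curves will be used, so no property of $\tilde A$ beyond this enters.

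First I would record the intersection-theoretic input. Let $\tmmathbf{E}_0\subseteq\operatorname{Pic}Z_0$ be the rank-$r$ lattice generated by the $\pi_0$-exceptional curves, and recall that the birational morphism $\pi\colon\tilde Z\to Z_0$ of smooth surfaces is a composition of blowups at smooth (possibly infinitely near) points. Then $\pi^{\ast}$ is injective, $\ker(\pi_{\ast}\colon\tmmathbf{E}\otimes\mathbbm{Q}\to\tmmathbf{E}_0\otimes\mathbbm{Q})$ is spanned by the $\pi$-exceptional curves $E_{r+1},\dots,E_{r+\ell}$, and by the projection formula $\pi^{\ast}(\tmmathbf{E}_0\otimes\mathbbm{Q})$ is orthogonal to $\ker\pi_{\ast}$; since $r+\ell=\operatorname{rank}\tmmathbf{E}$ this gives an orthogonal direct sum $\tmmathbf{E}\otimes\mathbbm{Q}=\pi^{\ast}(\tmmathbf{E}_0\otimes\mathbbm{Q})\oplus\ker\pi_{\ast}$. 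The key claim is that $N_{r+1},\dots,N_{r+\ell}$ is a basis of $\ker\pi_{\ast}\otimes\mathbbm{Q}$ with $N_{r+j}\cdot N_{r+k}=-\delta_{jk}$ and $N_{r+j}\cdot\pi^{\ast}\gamma=0$ for every $\gamma\in\operatorname{Pic}Z_0$. Since $Z_0$ is the minimal resolution of $Z$ and we work over a local base, no resolution of $Z$ dominating $Z_0$ carries a $(-1)$-curve that is not exceptional over $Z_0$ — otherwise its image on $Z_0$ would be a complete curve of self-intersection $\ge-1$, contradicting minimality. Hence all the contractions used to build the factorisation $\tilde Z\xrightarrow{\tau_{r+j}}Z'\xrightarrow{\tau'_{r+j}}Z$ take place over $Z_0$, so $\pi$ factors as $\tilde Z\xrightarrow{\tau_{r+j}}Z'\xrightarrow{\rho_j}Z_0$. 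Applying the projection formula to $\tau_{r+j}$ and to $\rho_j$ gives $N_{r+j}\cdot\pi^{\ast}\gamma=(\tau_{r+j\ast}E_{r+j})\cdot\rho_j^{\ast}\gamma=(\pi_{\ast}E_{r+j})\cdot\gamma=0$. Also $\rho_j$ is not an isomorphism (else $\pi=\tau_{r+j}$ would fail to contract $E_{r+j}$), so $Z'$ carries a $(-1)$-curve among the $\rho_j$-exceptional curves; condition (3) in the construction of the $N_i$ forces this curve to be $\tau_{r+j\ast}E_{r+j}$. Blowing $\tau_{r+j\ast}E_{r+j}$ down and pulling back to $\tilde Z$ identifies $N_{r+j}$ with the total-transform class of one of the points blown up by $\pi$, distinct $j$ giving distinct points; the standard orthonormality up to sign of such classes then yields $N_{r+j}\cdot N_{r+k}=-\delta_{jk}$. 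Consequently, writing $E=\pi^{\ast}(\pi_{\ast}E)+E'$ with $E'\in\ker\pi_{\ast}\otimes\mathbbm{Q}$ and expanding $E'$ in the basis $\{N_{r+j}\}$ while using $\pi^{\ast}(\pi_{\ast}E)\cdot N_{r+j}=0$, we get $E'=-\sum_{j=1}^{\ell}(N_{r+j}\cdot E)\,N_{r+j}$.

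Granting this, the rest is a direct computation. Since $\pi^{\ast}(\pi_{\ast}E)\cdot E'=0$, expanding $(D+E)E$ gives
\[
 f_{\sigma,D}(E)=-(D+E)E=-(D+\pi^{\ast}\pi_{\ast}E)\cdot\pi^{\ast}\pi_{\ast}E-(D+E')\cdot E'.
\]
By the projection formula $D\cdot\pi^{\ast}\pi_{\ast}E=\pi_{\ast}D\cdot\pi_{\ast}E$ and $(\pi^{\ast}\pi_{\ast}E)^2=(\pi_{\ast}E)^2$, so the first term equals $-(\pi_{\ast}D+\pi_{\ast}E)\cdot\pi_{\ast}E=f_{\pi_0,\pi_{\ast}K_{\tilde A}}(\pi_{\ast}E)$. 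For the second term, substitute $E'=-\sum_j(N_{r+j}\cdot E)N_{r+j}$ and use $E'\cdot E=(E')^2$ (orthogonality again); after collecting terms one finds $-(D+E')\cdot E'=\big(\sum_{j}(N_{r+j}\cdot E)N_{r+j}\big)\cdot(E+K_{\tilde A})$, which is precisely the second summand of $(\ref{diagonalisationeq})$.

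The main obstacle is the intersection-theoretic claim of the second paragraph: one must check carefully that the $N_{r+j}$ really behave like an orthonormal (up to sign) basis of the $\pi$-exceptional part of $\tmmathbf{E}$ and are orthogonal to $\pi^{\ast}\operatorname{Pic}Z_0$. This is exactly where the explicit description of the $N_i$ by successive contraction of $(-1)$-curves, together with the fact that no extraneous $(-1)$-curves can appear over the minimal resolution $Z_0$, is essential; once each $N_{r+j}$ is recognised as a total-transform class of a blown-up point, the identity for $E'$ — and hence the proposition — is formal bookkeeping.
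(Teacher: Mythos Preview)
Your argument is correct and follows essentially the same route as the paper: both proofs reduce the identity to the divisor decomposition $E=\pi^{\ast}\pi_{\ast}E-\sum_{j}(N_{r+j}\cdot E)\,N_{r+j}$ and then read off \eqref{diagonalisationeq} using the projection formula. The only real difference is in how that decomposition is verified: the paper computes the intersection numbers $N_{r+j}\cdot E_i$ directly from the factorisations $\tau_i$, whereas you recognise the $N_{r+j}$ as the total-transform classes of the points blown up by $\pi$ and invoke their standard orthonormality; your identification is correct (each $N_{r+j}$ lies in $\ker\pi_{\ast}$, is effective, and has square $-1$, hence must be one of those classes, and the assignment is injective since $E_{r+j}$ is the unique exceptional curve meeting $N_{r+j}$ negatively), though you might state this last injectivity point explicitly rather than leaving it as ``distinct $j$ giving distinct points''.
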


\begin{proof}
  Since $f_{\pi_0, \pi_{\ast} K_{\tilde{A}}} ( \pi_{\ast} E ) = - \pi^{\ast}
  \pi_{\ast} E ( E + K_{\tilde{A}} )$, it suffices to show that
  \begin{eqnarray}
    E & = & \pi^{\ast} \pi_{\ast} E - \left( \sum_{j = 1}^{\ell} \left( N_{r +
    j} E \right) N_{r + j} \right) .  \label{divisoreq}
  \end{eqnarray}
  A simple computation gives
  \begin{eqnarray*}
    N_{r + j} E_i & = & \left\{\begin{array}{ll}
      - 1 & \text{if } i = r + j\\
      1 & \text{if } \tau_i = \alpha \tau_{r + j} \text{ where } \alpha \text{
      is a blowup centered at a point on } \tau_{i \ast} E_i\\
      0 & \text{otherwise}
    \end{array}\right. .
  \end{eqnarray*}
  Let $E_{r + j_1}, \ldots, E_{r + j_s}$ be components of $N_i$ which
  intersect $E_i$, then $E_i = N_i - N_{r + j_1} - \cdots - N_{r + j_s}$. The
  curve $E_{r + j_i}$ is contracted by $\tau_i$, hence we can factor $\tau_i =
  \alpha \tau_{r + j}$ for some birational map $\alpha$. Since $E_{r + j_i}
  E_i \neq 0$, $\alpha$ must be a single blowup centered at a point on
  $\tau_{i \ast} E_i$. Hence (\ref{divisoreq}) follows from the above
  computation for $N_{r + j} E_i$. 
\end{proof}

We can deduce from (\ref{diagonalisationeq}) that the inequalities $N_{r + j}
K_{\tilde{A}} < 1$ for $j = 1, \ldots, \ell$ are necessary conditions for $(
\sigma, \tilde{A} )$ to be numerically rational, since if $N_{r + j}
K_{\tilde{A}} \geqslant 1$, we have
\begin{eqnarray*}
  f_{\sigma, K_{\tilde{A}}} ( N_{r + j} ) & = & - ( N_{r + j} K_{\tilde{A}} -
  1 ) \leqslant 0.
\end{eqnarray*}
\begin{proposition}
  \label{smoothterms}Suppose $A$ is log terminal. Then $0 \leqslant N_{r + j}
  K_{\tilde{A}} < 1$ for $j = 1, \ldots, \ell$.
\end{proposition}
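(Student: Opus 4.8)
The plan is to establish the two inequalities $N_{r+j}K_{\tilde A}\ge 0$ and $N_{r+j}K_{\tilde A}<1$ separately, both by relating the divisor $N_{r+j}$ and the canonical divisor $K_{\tilde A}$ back to data on the minimal resolution $Z_0\to Z$ of the underlying surface, where the log terminal hypothesis enters through the characterisation $\min\{a_ie_i\}>-1$ recalled in the excerpt. Recall $N_{r+j}=\tau_{r+j}^{\ast}\tau_{r+j\,*}E_{r+j}$, so $N_{r+j}$ is the total transform of a curve on the intermediate smooth surface $Z'$; its key property (used already in the proof of Proposition~\ref{diagonalisationlemma}) is that $N_{r+j}E_i\in\{-1,0,1\}$, with value $-1$ only at $i=r+j$. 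The first step is to write $K_{\tilde A}=\sigma^{\ast}K_A+\sum_i a_iE_i$ and intersect with $N_{r+j}$. Since $\sigma^{\ast}K_A$ is pulled back from $Z$ and $N_{r+j}$ is $\sigma$-exceptional, the term $\sigma^{\ast}K_A\cdot N_{r+j}$ vanishes, so $N_{r+j}K_{\tilde A}=\sum_i a_i(N_{r+j}E_i)$; by the intersection computation this is $-a_{r+j}$ plus a nonnegative combination (coefficients $0$ or $1$) of the $a_i$ for those $E_i$ that are contracted by the blowup $\alpha$ in the factorisation $\tau_i=\alpha\tau_{r+j}$.

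For the upper bound $N_{r+j}K_{\tilde A}<1$, I would argue by induction on the number of blowups in $\pi$ (equivalently on $\ell$), contracting one $(-1)$-curve at a time. For the \emph{innermost} curves $E_{r+j}$ — those such that $\tau_{r+j\,*}E_{r+j}$ is itself the only $(-1)$-curve to contract, i.e. $N_{r+j}=E_{r+j}$ — the claim reduces to $a_{r+j}e_{r+j}>-1$ combined with $e_{r+j}\ge 1$, giving $a_{r+j}>-1$, i.e. $N_{r+j}K_{\tilde A}=-a_{r+j}<1$. For a general $E_{r+j}$, one uses the recursive structure: $N_{r+j}$ decomposes (as in the proof of Proposition~\ref{diagonalisationlemma}) as $E_{r+j}$ plus the $N_{r+j_k}$ for the "later" exceptional curves meeting it, and the corresponding identity $N_{r+j}K_{\tilde A}= -a_{r+j} + \sum_k(\text{contribution of }N_{r+j_k})$ lets me bootstrap from curves blown up later to curves blown up earlier. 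Throughout, the discrepancies $a_i$ for curves \emph{not} contracted by $\pi$ (the $E_1,\dots,E_r$, which survive on $Z_0$) are controlled by the log terminal hypothesis on the minimal resolution of $A$ itself, while the $a_i$ for $i>r$ arising from further blowups are computed by the usual blowup formula for $K$, together with the ramification-index bookkeeping: when $\beta$ blows up a node of $\operatorname{supp}R$ with ramification indices $e_1=se_2$, the new exceptional curve is totally ramified with index $e_2$, and $\Delta_{\tilde B}=\beta^{\ast}\Delta_{\tilde A}-(1-1/e_1)E_0$, exactly as in the proof of Proposition~\ref{stabilitynum}.

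For the lower bound $N_{r+j}K_{\tilde A}\ge 0$, equivalently $a_{r+j}\le\sum_k(\dots)$, I expect to use that $\tilde A$ is obtained from a terminal order on $Z_0$ by a sequence of blowups, so each discrepancy $a_i$ for $i>r$ can be written explicitly in terms of $K_{Z_0}$-discrepancies on the surface and the ramification index at the centre of the blowup. Since the surface $Z_0\to Z$ is the minimal resolution of a log terminal (hence rational) singularity, all surface discrepancies of $\sigma$ over $Z_0$ are $\ge 0$; combining this with the formula for $K_{\tilde A}$ in terms of $K_{\tilde Z}$ and $\Delta_{\tilde A}$, and noting $\Delta_{\tilde A}$ is effective, gives the nonnegativity. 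Concretely: $N_{r+j}(K_{\tilde Z}+\Delta_{\tilde A})=N_{r+j}K_{\tilde Z}+N_{r+j}\Delta_{\tilde A}$, where $N_{r+j}K_{\tilde Z}\ge 0$ because $N_{r+j}$ is a total transform of a curve on a surface dominating $Z_0$ and $K_{\tilde Z}$ is $\pi_0$-exceptionally $\pi_0^\ast K_{Z_0}$ plus an effective (by rationality) combination of exceptional curves, while $N_{r+j}\Delta_{\tilde A}$ — though $\Delta_{\tilde A}$ has a negative contribution along the non-exceptional ramification — still evaluates $\ge 0$ because $N_{r+j}$ only meets those components with the nonnegative coefficients recorded above.

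The main obstacle I anticipate is the bookkeeping of ramification indices along chains of blowups: the inequality $N_{r+j}K_{\tilde A}<1$ is an equality-of-fractions statement where the "$1$" is genuinely sharp (it is attained in the limiting situation $e_1=1$, as the proof of Proposition~\ref{stabilitynum} shows), so the argument must track not just signs but the precise rational values of the $a_ie_i$ through the Artin–Mumford-controlled behaviour of ramification under blowup. In particular one must check that the "$+1$" contributions in $N_{r+j}E_i$ always pair with curves $E_i$ whose discrepancy $a_i$ is large enough (in the appropriate weighted sense) to absorb a possible $a_{r+j}$ close to $-1/e_{r+j}$ without the total reaching $1$; this is where the structure of the dual graph of a log terminal surface singularity — the fact that it is a tree of rational curves with bounded configurations — will do the real work, exactly as foreshadowed in the introduction's remark that the proof "depends on an analysis of the dual resolution graphs of the minimal resolutions of log terminal singularities."
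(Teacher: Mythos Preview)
You have correctly set up the key identity $N_{r+j}K_{\tilde A}=-a_{r+j}+\sum_i a_i$, with the sum running over those $E_i$ (outside the support of $N_{r+j}$) for which $N_{r+j}E_i=1$. But you have missed the one ingredient that makes both inequalities almost immediate: in this context $(\sigma,\tilde A)$ is a \emph{minimal} resolution, so $K_{\tilde A}$ is $\sigma$-nef. The lower bound $N_{r+j}K_{\tilde A}\ge 0$ then follows in one line, since $N_{r+j}$ is effective and $\sigma$-exceptional. For the upper bound, $\sigma$-nefness of $K_{\tilde A}$ together with the negativity lemma (\cite{KoMo}, Lemma~3.41) forces $a_i\le 0$ for \emph{all} $i$; hence $\sum_i a_i\le 0$ and $N_{r+j}K_{\tilde A}\le -a_{r+j}<1$, the last inequality being exactly the log terminal condition $a_{r+j}e_{r+j}>-1$ combined with $e_{r+j}\ge 1$. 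No induction, no ramification bookkeeping, and no dual-graph analysis is needed here; that analysis enters only later, in Theorem~\ref{technicaltheorem}.

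Your inductive scheme for the upper bound has a genuine gap at the base case. Even for an ``innermost'' curve with $N_{r+j}=E_{r+j}$, the sum $\sum_i a_i$ is typically \emph{not} empty: the curves $E_i$ with $N_{r+j}E_i=1$ are precisely the ``parents'' through whose proper transforms one blows up to create $E_{r+j}$, and these contribute their discrepancies $a_i$. So the equality $N_{r+j}K_{\tilde A}=-a_{r+j}$ you assert for the base case is not correct; one really needs $a_i\le 0$ to discard those terms, which is exactly the input from nefness that you omitted. Likewise, your argument for the lower bound via $N_{r+j}K_{\tilde Z}+N_{r+j}\Delta_{\tilde A}$ is working much harder than necessary (and the claim about the sign of $N_{r+j}\Delta_{\tilde A}$ would itself require care, since $E_{r+j}$ may be a ramification curve and $N_{r+j}E_{r+j}=-1$).
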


\begin{proof}
  Since $( \sigma, \tilde{A} )$ is a minimal resolution, $K_{\tilde{A}}$ is
  $\sigma$-nef (c.f. {\cite{CI1}}, Theorem 3.10). Since $N_{r + j}$ is
  effective, we get the first inequality. Note that $N_{r + j} E_{r + j} = (
  \tau_{r + j \ast} E_{r + j} )^2 = - 1$ and $N_{r + j} E_k = 0$ for any other
  exceptional curve $E_k \subseteq \tmop{supp} N_{r + j}$. Clearly the
  intersection numbers of $N_{r + j}$ with exceptional curves away from its
  support are non-negative, in fact, if $E_i$ is not an irreducible component
  of $N_{r + j}$, then $E_i N_{r + j} = 0$ or $1$. This gives
  \begin{eqnarray*}
    N_{r + j} K_{\tilde{A}} & = & - a_{r + j} + \sum_i a_i
  \end{eqnarray*}
  where the summation ranges over all $i$ where $E_i$ intersects $N_{r + j}$.
  Now since $K_{\tilde{A}}$ is $\sigma$-nef, we have by {\cite{KoMo}}, Lemma
  3.41 that $a_i \leqslant 0$ for all $i$. Moreover, since $A$ is log
  terminal, we have $a_{r + j} > - 1$, hence $N_{r + j} K_{\tilde{A}} < 1$.
\end{proof}

The above propositions shows that log terminal orders with smooth centres are
numerically rational, and that in general, a minimal resolution $( \sigma,
\tilde{A} )$ of a log terminal order is numerically rational if and only if
$f_{\pi_0, \pi_{\ast} K_{\tilde{A}}} ( \pi_{\ast} E ) > 0$ for all $E \in
\tmmathbf{E}^+ \backslash \{ 0 \}$ such that $\pi_{\ast} E > 0$. This allows
us to work directly with the minimal resolution $\pi_0 : Z_0 \longrightarrow
Z$. Henceforth, we will drop the $\pi_{\ast}$ and refer to the exceptional
curves on $Z_0$ by $E_1, \ldots, E_r$ and denote by $\tmmathbf{E}_0 \subseteq
\tmop{Pic} Z_0$ the subgroup generated by $E_1, \ldots, E_r$.

Recall that the numerical cycle $Z_{\tmop{num}}$ with respect to the
birational morphism $\pi_0 : Z_0 \longrightarrow Z$ is defined to be the
minimal effective exceptional divisor $E$ on $Z_0$ such that $- E$ is
$\pi_0$-nef. Given a connected effective exceptional divisor $D$, we can
contract $\tmop{supp} D$ to get a birational morphism $\pi_D : Z_0
\longrightarrow Z_D$. We define $D_{\tmop{num}}$ to be the numerical cycle
with respect to Ÿ$\pi_D$, and call it the numerical cycle of the support of
$D$. We call a connected effective exceptional divisor $D$ on $Z_0$ special if
$D = D_{\tmop{num}}$. In particular, the numerical cycle $Z_{\tmop{num}}$ of
$Z_0$ is a special divisor.

As we shall prove in Theorem \ref{technicaltheorem}, $f_{\pi_0, \pi_{\ast}
K_{\tilde{A}}}$ is positive for all $E \in \tmmathbf{E}^+_0 \backslash \{ 0
\}$ if its values at special divisors are positive. Theorem
\ref{logterminalimpliesnumericallyrational} follows from the next two results.

\begin{proposition}
  \label{logterminalordersandspecialdivisors}Let $A$ be a log terminal order
  on $Z$. Then $f_{\pi_0, \pi_{\ast} K_{\tilde{A}}} ( E ) > 0$ for all special
  divisors $E \in \tmmathbf{E}_0$.
\end{proposition}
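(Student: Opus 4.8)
The plan is to reduce the statement to an explicit inequality on the dual resolution graph of a log terminal singularity and then verify it case by case, using the classification of such singularities. First I would unwind the definitions: a special divisor $E$ on $Z_0$ is by construction the numerical cycle $D_{\tmop{num}}$ attached to contracting some connected subconfiguration $\tmop{supp} E$ of exceptional curves, so $-E$ is $\pi_D$-nef, i.e. $E E_i \leqslant 0$ for every $E_i \subseteq \tmop{supp} E$. Writing $D = \pi_{\ast} K_{\tilde A}$, we have $f_{\pi_0, D}(E) = -(D+E)E = -DE - E^2$. Since $K_{\tilde A} = \sigma^{\ast} K_A + \sum a_i E_i$ with $a_i \leqslant 0$ (by $\sigma$-nefness, {\cite{KoMo}}, Lemma 3.41) and the log terminal bound $a_i e_i > -1$, I would first bound $-DE$ from below in terms of the $a_i$ and the intersection numbers $E E_i$, and $-E^2$ from below using that $E$ is the numerical cycle of its support.

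The key computation is to expand, with $E = \sum_{E_i \subseteq \tmop{supp} E} n_i E_i$ and the $n_i > 0$,
\begin{eqnarray*}
  f_{\pi_0, D}(E) & = & -\sum_i n_i\, (D + E) E_i \;=\; \sum_i n_i\bigl(-K_{\tilde A} E_i - E E_i\bigr) + \sum_i n_i\, (\sigma^{\ast}K_A - K_{\tilde A}) E_i ,
\end{eqnarray*}
where I have split $D E_i = K_{\tilde A} E_i - (K_{\tilde A} - \sigma^{\ast}K_A)E_i$. The first sum is controlled termwise: for each $E_i$ in the support, $-E E_i \geqslant 0$ (numerical cycle), and $-K_{\tilde A}E_i = -(K_{Z_0} + \Delta_{\tilde A})E_i$, where by adjunction $-K_{Z_0} E_i = E_i^2 + 2 - 2g(E_i) + (\text{conductor terms})$; since the minimal resolution $Z_0$ of a log terminal singularity is a tree of rational curves with all $E_i^2 \leqslant -2$, one gets $-K_{Z_0} E_i \leqslant 0$ but the $\Delta_{\tilde A}$ contribution (the ramification of $\tilde A$, with indices $e_i$) pushes things the right way. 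The remaining term $\sum_i n_i (\sigma^{\ast}K_A - K_{\tilde A})E_i = -\sum_i n_i(\sum_j a_j E_j)E_i$ is where the log terminal inequality $a_i > -1/e_i$ enters decisively, together with $(\sum_j a_j E_j)E_i \geqslant 0$ for $E_i$ not in... no: I would instead invoke $a_j \leqslant 0$ and the tree structure to sign each contribution. The upshot should be that $f_{\pi_0,D}(E)$ is a sum of manifestly nonnegative terms plus a defect bounded below by the log terminal slack $\min\{1 + a_i e_i\}/e_i > 0$, forcing strict positivity.

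The main obstacle I anticipate is that the termwise bound above will \emph{not} be strictly positive term by term — the $-K_{Z_0}E_i$ contributions vanish exactly on the $(-2)$-curves, which are precisely the ones present in the du Val part of any log terminal configuration, and the numerical-cycle inequality $-EE_i \geqslant 0$ can also be an equality on interior curves. So the strictness has to come from somewhere global. The right move is: if every term vanished, then $E$ would be supported on a $(-2)$-configuration on which $\tilde A$ is unramified and $\sigma^{\ast}K_A = K_{\tilde A}$ there, i.e. $E$ would be (a multiple of) the fundamental cycle of a du Val configuration with $f = -(D+E)E$; but then $D$ restricted to that configuration is the pullback of a nef class, and one computes directly (as in the crepant case, Corollary \ref{canonicalimpliesnumericallyrational}) that $f_{\pi_0,D}(E) = -n^2 E^2/2 > 0$ anyway — contradiction. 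So I would organize the proof as: (i) reduce to $E$ a special divisor and expand as above; (ii) show each grouped term is $\geqslant 0$; (iii) analyze the equality locus and show it forces $E$ to lie in a sub-configuration where the crepant/ADE estimate applies, giving strict positivity. Handling the bookkeeping of ramification indices $e_i$ across the (possibly non-minimal-over-$Z$) curves $E_{r+1},\dots,E_{r+\ell}$ — already partly dispatched by Proposition \ref{smoothterms} — is the fiddly part, but Proposition \ref{diagonalisationlemma} has already isolated those contributions, so here I only need the $E_1,\dots,E_r$ over the minimal resolution of $Z$.
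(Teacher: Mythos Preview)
Your plan is far more complicated than what is needed, and in its present form it does not close. The paper's argument is three lines and uses no classification, no $K_{Z_0}$, no ramification indices $e_i$, and no equality-case analysis.

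Here is the idea you are missing. On $Z_0$ the divisor $D = \pi_{\ast} K_{\tilde A}$ is numerically $\pi_0$-equivalent to $\sum_{i=1}^r a_i E_i$, since the $\pi_0^{\ast} K_A$ part pairs to zero with every exceptional curve. So for a special divisor $E = E_{\tmop{num}}$ one has
\[
f_{\pi_0, D}(E_{\tmop{num}}) \;=\; -E_{\tmop{num}}^2 \;-\; \sum_i a_i\,(E_i \cdot E_{\tmop{num}}).
\]
Split the sum by whether $E_i \subseteq \tmop{supp} E$. For $E_j \nsubset \tmop{supp} E$ one has $E_{\tmop{num}}\cdot E_j \geqslant 0$ and $a_j \leqslant 0$, so those terms are $\geqslant 0$ and may be dropped. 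For $E_i \subseteq \tmop{supp} E$ the log terminal bound in the weak form $a_i > -1$ (which already follows from $a_i e_i > -1$ and $e_i \geqslant 1$) together with $-E_{\tmop{num}}\cdot E_i \geqslant 0$ gives
\[
-\sum_{E_i \subseteq \tmop{supp} E} a_i\,(E_i \cdot E_{\tmop{num}}) \;>\; \sum_{E_i \subseteq \tmop{supp} E} (E_i \cdot E_{\tmop{num}}) \;=\; E_{\tmop{red}}\cdot E_{\tmop{num}},
\]
strict because $E_{\tmop{num}}^2 < 0$ forces at least one $E_i\cdot E_{\tmop{num}} < 0$. Hence $f_{\pi_0,D}(E_{\tmop{num}}) > -E_{\tmop{num}}(E_{\tmop{num}} - E_{\tmop{red}}) \geqslant 0$, the last inequality because $E_{\tmop{num}} - E_{\tmop{red}}$ is effective with support in $\tmop{supp} E$ and $-E_{\tmop{num}}$ is nef there. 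That is the entire proof.

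Against this, your sketch has concrete problems. The splitting $D E_i = K_{\tilde A} E_i - (K_{\tilde A} - \sigma^{\ast}K_A)E_i$ conflates intersection numbers on $Z_0$ with those on $\tilde Z$; $D$ lives on $Z_0$ and must be paired there. Introducing $K_{Z_0}$, adjunction, and the individual $e_i$ is irrelevant: only the coarse bound $a_i > -1$ is used, never the sharper $a_i > -1/e_i$, so your ``log terminal slack'' $\min\{1 + a_i e_i\}/e_i$ is not where strictness comes from. Finally, your proposed equality-locus reduction to a du Val subconfiguration is not actually implied by the vanishing of the terms you wrote down, and is unnecessary anyway --- strictness falls out directly from $E_{\tmop{num}}^2 < 0$. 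The classification of log terminal resolution graphs does get used in this paper, but only later (in the proof of Theorem \ref{technicaltheorem}); for the present proposition it is overkill.
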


\begin{proof}
  Let $E$ be a special divisor. Suppose $E_j$ is not contained in $\tmop{supp}
  E$, then $E_{\tmop{num}} E_j \geqslant 0$. Since $a_i \leqslant 0$ (c.f.
  proof of Proposition \ref{smoothterms}), we have
  \begin{eqnarray*}
    f_{\pi_0, \pi_{\ast} K_{\tilde{A}}} ( E_{\tmop{num}} ) & \geqslant & -
    E_{\tmop{num}} \left( E_{\tmop{num}} + \sum_{E_i \subseteq \tmop{supp} E}
    a_i E_i \right) .
  \end{eqnarray*}
  Now $A$ is log terminal, so $a_i > - 1$ for all $i$. Moreover, by the
  definition of $E_{\tmop{num}}$, we have $- E_{\tmop{num}} E_i \geqslant 0$
  for any $E_i \subseteq \tmop{supp} E$, so
  \begin{eqnarray*}
    f_{\pi_0, \pi_{\ast} K_{\tilde{A}}} ( E_{\tmop{num}} ) & > & -
    E_{\tmop{num}} ( E_{\tmop{num}} - E_{\tmop{red}} ),
  \end{eqnarray*}
  where $E_{\tmop{red}}$ denotes the reduced exceptional divisor with the same
  support as $E_{\tmop{num}}$. Since $E_{\tmop{num}}$ is the numerical cycle
  of its support, we see that $E_{\tmop{num}} - E_{\tmop{red}}$ is an
  effective divisor with support contained in $\tmop{supp} E$. This gives
  $f_{\pi_0, \pi_{\ast} K_{\tilde{A}}} ( E_{\tmop{num}} ) > 0$.
\end{proof}

\begin{theorem}
  \label{technicaltheorem}Let $\pi_0 : Z_0 \longrightarrow Z$ be the minimal
  resolution of a log terminal singularity and $g : \tmmathbf{E}
  \longrightarrow \mathbbm{Q}$ be a function $g ( E ) = - E^2 + \ell ( E )$
  with $\ell$ linear and $\ell ( E_i ) \leqslant 0$ for $i = 1, \ldots, r$.
  Then $g ( E ) > 0$ for all $E \in \tmmathbf{E}^+_0 \backslash \{ 0 \}$ if
  and only if $g ( E ) > 0$ for all special $E \in \tmmathbf{E}_0$.
\end{theorem}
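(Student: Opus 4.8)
The plan is to prove the non-trivial implication --- that positivity of $g$ on special divisors forces positivity on all of $\tmmathbf{E}^+_0\setminus\{0\}$ --- by induction on $\deg F:=\sum_i a_i$, where $F=\sum_i a_iE_i\in\tmmathbf{E}^+_0$; the converse is immediate, since special divisors are nonzero and effective. The two facts used throughout are the polarization identity $g(P+Q)=g(P)+g(Q)-2(P\cdot Q)$, which follows from bilinearity of the intersection form together with linearity of $\ell$, and negative-definiteness of the intersection form on $\tmmathbf{E}_0$, so that $-F^2>0$ for $F\neq0$. First I would reduce to the case $S:=\tmop{supp}F$ connected: if $\tmop{supp}F=S_1\sqcup S_2$ with $S_1,S_2$ disjoint, then $F=F_1+F_2$ with $F_1\cdot F_2=0$, hence $g(F)=g(F_1)+g(F_2)$ with $\deg F_i<\deg F$, and the inductive hypothesis applies. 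For connected $S$, let $N:=F_{\tmop{num}}$ be the numerical cycle of $S$, a special divisor supported on all of $S$, and dispense with the case $F=N$ by hypothesis. The base case $\deg F=1$ is immediate, since a single curve is its own numerical cycle and hence special.

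The induction then divides according to whether $F\geq N$. Suppose first that $F\geq N$; by minimality of $N$ this includes, in particular, every $F$ with $F\cdot E_i\leq0$ for all $E_i\subseteq S$. Then $G:=F-N$ is effective, nonzero (as $F\neq N$), of degree $<\deg F$, so $g(G)>0$ by induction and $g(N)>0$ by hypothesis; moreover $N\cdot G\leq0$, since $N\cdot E_i\leq0$ for every $E_i\subseteq S$ while $G$ is effective and supported on $S$. The polarization identity then gives $g(F)=g(N)+g(G)-2(N\cdot G)\geq g(N)+g(G)>0$, settling this case with no further input.

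The remaining case, $F\not\geq N$, is where the real work lies and, I expect, the main obstacle. Here $F$ is not $\pi_S$-anti-nef, so some $E_k\subseteq S$ satisfies $F\cdot E_k>0$, and $N$ is necessarily non-reduced; geometrically $F$ sits strictly below the numerical cycle of its support, so one cannot simply peel $N$ off. Instead one must compare $g(F)$ with the value of $g$ at a suitable special divisor obtained by enlarging $F$ --- running the upward numerical-cycle algorithm from $F$ (a computation sequence in the sense of Artin and Laufer) and iterating the polarization identity in the form $g(F^{(j)})=g(F^{(j+1)})+2\bigl(F^{(j+1)}\cdot E_{k_j}\bigr)-g(E_{k_j})$, and then showing the accumulated correction is non-negative. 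Controlling this correction is exactly where log-terminality enters: the dual graph of the minimal resolution of a log terminal surface singularity is a tree of smooth rational curves with at most one branch vertex, which is trivalent and, precisely in the situations where $N$ fails to be reduced, has self-intersection $-2$; this structural input, together with the positivity $g(E_i)>0$ of the individual (special) curves and the non-negative contribution of the Hirzebruch--Jung chains emanating from the branch vertex, is what forces $g(F)>0$. Carrying out this last step by a case analysis over the classification of these dual graphs is the technical heart of the proof, and the only place where more than the elementary formalism above is required.
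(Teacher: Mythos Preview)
Your reductions and Case~1 are fine, but Case~2 is where the argument breaks, and not merely because you defer the case analysis. Two concrete problems. First, the endpoint of the upward computation sequence starting at $F$ is \emph{not} a special divisor: running the Laufer algorithm on $S=\tmop{supp}F$ terminates at the minimal $F'\geqslant F$ with $-F'$ nef on $S$, and this $F'$ equals $N=F_{\tmop{num}}$ only when $F\leqslant N$, which you have not assumed (you only have $F\not\geqslant N$). In general $F'$ is some divisor with $F'\geqslant N$ and $F'\neq N$, hence not special, so the hypothesis gives you nothing about $g(F')$. Second, even if you could reach a special divisor, your induction variable $\deg F$ is incompatible with going upward: $\deg F'>\deg F$, so you cannot invoke the inductive hypothesis on $F'$, and you cannot fall back on Case~1 for $F'$ either, since Case~1 itself relies on the same induction. (There is also a small slip: the correct iteration is $g(F^{(j)})=g(F^{(j+1)})-g(E_{k_j})+2\,F^{(j)}\!\cdot E_{k_j}$, not $2\,F^{(j+1)}\!\cdot E_{k_j}$.) So as written the scheme does not close.

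The paper's proof avoids both issues by inducting on a different quantity and using a different decomposition. For connected $D$ with support contracting to a singularity of multiplicity $m(D)$, one sets $n=\min\{n_i:-E_i^2>2\}$ and takes $D_1=\gcd(nD_{\tmop{num}},D)$, $D_2=D-D_1$. Then $D_1D_2\leqslant 0$; each connected component of $D_2$ has strictly smaller $m$, which drives the induction; and $g(D_1)$ is bounded below \emph{directly}, without recursion, by the estimate $-D^2\geqslant 2s+\sum_i(b_i-2)n_i$ (Proposition~\ref{estimateonD2}), where $s$ is the least integer with $D\leqslant sZ_{\tmop{num}}$. This estimate, proved via $h^0(\OO_D)\geqslant s$ and adjunction, is the missing ingredient in your approach: it gives $-D_1^2\geqslant n\,m(D_1)=-n\,D_{\tmop{num}}^2$, whence $g(D_1)\geqslant n\,g(D_{\tmop{num}})>0$. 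The base case $m(D)=2$ is exactly the same estimate specialised to $b_i=2$. Your peeling-off-$N$ idea is morally related to the $m(D)=2$ case, but the general step needs the $\gcd$ truncation and the $-D^2$ bound rather than a computation sequence.
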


\begin{note}
  Note that $f_{\pi_0, \pi_{\ast} K_{\tilde{A}}}$ satisfies the above
  hypotheses for $g$ if $( \sigma, \tilde{A} )$ is a minimal resolution. Since
  in this case $K_{\tilde{A}}$ is $\sigma$-nef, we have $\pi_{\ast}
  K_{\tilde{A}} \pi_{\ast} E_i = K_{\tilde{A}} N_i \geqslant 0$ for $i = 1,
  \ldots, r$, hence $\pi_{\ast} K_{\tilde{A}}$ is $\pi_0$-nef.
\end{note}

The rest of this section is devoted to the proof of the above theorem. We fix
notation for the rest of the section: let $\pi_0 : Z_0 \longrightarrow Z$ be
the minimal resolution of a log terminal singularity. We denote by $E_1,
\ldots, E_r$ the exceptional curves and $Z_{\tmop{num}}$ the numerical cycle
on $Z_0$. Also we will denote by $g ( E ) = - E^2 + \ell ( E )$ a function
$\tmmathbf{E}_0 \longrightarrow \mathbbm{Q}$ satisfying the hypothesis of
Theorem \ref{technicaltheorem}.

\subsection{Modified numerical cycle}

The usual notion of numerical cycle can be modified with respect to a given
effective exceptional divisor $D$ as follows: we define the numerical cycle
$D'$ associated to $D$ to be the minimal effective divisor satisfying $D
\leqslant D'$ and $- D'$ is $\pi_0$-nef. When $D = 0$, then $D'$ is just the
usual numerical cycle $Z_{\tmop{num}}$. To see that $D'$ exists and is unique
for a given $D$, pick an integer $n$ such that $D \leqslant n Z_{\tmop{num}}$.
Then the divisor $D'' = \gcd \{ C \mid D \leqslant C \leqslant n
Z_{\tmop{num}}, - C \text{ is } \pi_0 \text{-nef} \}$ is well defined since
the $\gcd$ is taken over finitely many exceptional divisors, and clearly $D' =
D''$. We can construct $D'$ inductively by the following procedure, which is
modelled on the construction of $Z_{\tmop{num}}$ (c.f. {\cite{reid-chapters}},
Section 4.5).

We start with $D_0 = D$ and define $D_{i + 1}$ recursively as follows. If $-
D_i$ is $\pi_0$-nef, then we are done; otherwise there exists some irreducible
exceptional curve $E$ such that $D_i \cdot E > 0$. Define $D_{i + 1} = D_i +
E$ and repeat. The following lemma shows that the above procedure terminates
at $D'$.

\begin{lemma}
  For each $i$, we have $D_i \leqslant D'$.
\end{lemma}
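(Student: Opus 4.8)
The plan is to prove the lemma by induction on $i$, following closely the classical argument that the Laufer-type construction of the numerical cycle terminates (c.f. \cite{reid-chapters}, Section 4.5), but adapted to the ``modified'' situation where we start from $D_0 = D$ rather than from a single exceptional curve. The base case $i = 0$ is immediate, since $D' $ was defined to satisfy $D = D_0 \leqslant D'$. So suppose inductively that $D_i \leqslant D'$ and that $-D_i$ is not $\pi_0$-nef, so that the construction picks an irreducible exceptional curve $E$ with $D_i \cdot E > 0$ and sets $D_{i+1} = D_i + E$. We must show $D_{i+1} = D_i + E \leqslant D'$, i.e. that $E$ appears in $D' - D_i$ with at least the multiplicity by which we are about to add it.

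The key step is the following dichotomy. Write $D' = D_i + F$ with $F$ effective and exceptional (using the inductive hypothesis). If $E \not\leqslant \operatorname{supp} F$, i.e. $E$ does not occur in $F$, then since $E$ does occur in $D_i$ with some multiplicity and the intersection form restricted to a connected exceptional configuration is negative definite, we would get $F \cdot E = (D' - D_i)\cdot E$. But $-D'$ is $\pi_0$-nef by definition of $D'$, so $D' \cdot E \leqslant 0$, while $D_i \cdot E > 0$ by the choice of $E$; hence $F \cdot E = D'\cdot E - D_i \cdot E < 0$. On the other hand, if $E$ is \emph{not} a component of $F$, then $F\cdot E \geqslant 0$ because $F$ is an effective exceptional divisor none of whose components is $E$ (distinct irreducible curves meet non-negatively). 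This contradiction forces $E$ to be a component of $F$, i.e. the coefficient of $E$ in $F = D' - D_i$ is at least $1$, which is exactly the statement that $D_{i+1} = D_i + E \leqslant D'$. This completes the induction.

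The main thing to be careful about — and the step I expect to be the only real obstacle — is the reduction to the connected, negative-definite situation so that the intersection-theoretic inequalities above are valid. Concretely, one should first observe that it suffices to treat each connected component of the exceptional locus separately; on each such component the intersection form is negative definite (this is standard for exceptional configurations of a resolution of a normal surface singularity), and the $\pi_0$-nef conditions $-D_i\cdot E \leqslant 0$ etc. are imposed only against exceptional curves, so everything takes place inside $\tmmathbf{E}_0$ where the form is negative definite. One also needs that the construction is actually well-defined, i.e. that at each stage $D_i \leqslant n Z_{\tmop{num}}$ for the fixed $n$ chosen above; but this follows from the very inequality $D_i \leqslant D'$ we are proving together with $D' \leqslant n Z_{\tmop{num}}$, so the induction is self-contained. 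Finally, termination of the procedure at $D'$ (as opposed to merely at \emph{some} divisor $\geqslant D$ with $\pi_0$-nef negative) then follows: the sequence $D_0 \leqslant D_1 \leqslant \cdots$ is bounded above by $D'$, hence stabilizes, and it can only stabilize at a $D_i$ with $-D_i$ being $\pi_0$-nef, which by minimality of $D'$ and the bound $D_i \leqslant D'$ must be $D_i = D'$.
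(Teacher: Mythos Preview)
Your argument is correct and is essentially the same as the paper's: both proceed by induction, and the inductive step is the observation that if $E$ were not a component of $D'-D_i$ then $(D'-D_i)\cdot E\geqslant 0$, contradicting $D'\cdot E\leqslant 0$ and $D_i\cdot E>0$. One small remark: the appeal to negative definiteness and to ``$E$ does occur in $D_i$'' is unnecessary (and the latter need not even hold); the only fact used is that distinct irreducible curves intersect non-negatively, which you state correctly a few lines later.
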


\begin{proof}
  Suppose $D_i \leqslant D'$ and let $E$ be any exceptional curve. If the
  effective divisor $D' - D_i$ is supported away from $E$, then $D_i E
  \leqslant D' E \leqslant 0$. Hence $D_{i + 1} = D_i + \tilde{E}$ where
  $\tilde{E}$ is an exceptional curve whose multiplicity in $D_i$ is strictly
  less than its multiplicity in $D'$. This shows that $D_{i + 1} \leqslant
  D'$.
\end{proof}

The following inequality will be useful for bounding $- D^2$ below.

\begin{lemma}
  \label{firstlemma}Let $D$ and $D'$ be as above. Then $h^0 ( \OO_D )
  \geqslant h^0 ( \OO_{D'} )$.
\end{lemma}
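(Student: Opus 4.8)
The plan is to compare the two divisors $D$ and $D'$ by the intermediate divisors $D = D_0 \le D_1 \le \cdots \le D_N = D'$ produced by the inductive construction described just above, where $D_{i+1} = D_i + E^{(i)}$ for some exceptional curve $E^{(i)}$ with $D_i \cdot E^{(i)} > 0$. Since $h^0$ is not obviously monotone under adding a curve, I would instead track cohomology along this chain using the short exact sequence of sheaves
\[
0 \longrightarrow \OO_{E^{(i)}}(-D_i) \longrightarrow \OO_{D_{i+1}} \longrightarrow \OO_{D_i} \longrightarrow 0,
\]
where $\OO_{E^{(i)}}(-D_i)$ denotes the restriction to $E^{(i)} \cong \mathbbm{P}^1$ of $\OO_{Z_0}(-D_i)$ twisted so that it has degree $-D_i \cdot E^{(i)}$. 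The key point is that at each step $D_i \cdot E^{(i)} > 0$ by construction, so $\deg_{E^{(i)}} \OO_{E^{(i)}}(-D_i) < 0$, hence $h^0(\OO_{E^{(i)}}(-D_i)) = 0$. Taking the long exact sequence in cohomology then yields $h^0(\OO_{D_{i+1}}) \ge h^0(\OO_{D_i})$ — in fact, $h^0(\OO_{D_i}) \ge h^0(\OO_{D_{i+1}})$ is what drops out directly, since the map $H^0(\OO_{D_{i+1}}) \to H^0(\OO_{D_i})$ has kernel $H^0(\OO_{E^{(i)}}(-D_i)) = 0$, giving an injection $H^0(\OO_{D_{i+1}}) \hookrightarrow H^0(\OO_{D_i})$.

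Chaining these inequalities along $i = 0, \ldots, N-1$ gives $h^0(\OO_{D'}) = h^0(\OO_{D_N}) \le \cdots \le h^0(\OO_{D_0}) = h^0(\OO_D)$, which is exactly the claim. I should check the edge case $D = 0$: then $\OO_D = 0$ and $h^0 = 0$, while $D' = Z_{\tmop{num}} > 0$ has $h^0(\OO_{D'}) \ge 0$; the inequality $0 \ge h^0(\OO_{Z_{\tmop{num}}})$ would fail unless $h^0(\OO_{Z_{\tmop{num}}}) = 0$, but for the minimal resolution of a rational — hence in particular log terminal — singularity one has $h^0(\OO_{Z_{\tmop{num}}}) = \dim_k(\OO_{Z}/\m) = 1$ via $\pi_{0*}\OO_{Z_0} = \OO_Z$... so the intended reading is presumably $D$ effective and nonzero, or $h^0$ of the structure sheaf of a nonzero cycle; I would state the lemma for $D \ne 0$ (which is the only case used, since $-D^2$ needs bounding for effective $D > 0$) so that every $D_i$ in the chain is a genuine nonzero exceptional cycle.

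The main obstacle is bookkeeping rather than conceptual: one must make sure that at every step the curve $E^{(i)}$ being added genuinely satisfies $D_i \cdot E^{(i)} > 0$ (this is built into the recursive procedure, so it is free) and that the twisted restriction $\OO_{E^{(i)}}(-D_i)$ really is a line bundle of the asserted negative degree on $E^{(i)} \cong \mathbbm{P}^1$ — here I use that $Z_0$ is a smooth surface, so every exceptional curve is a smooth rational curve and the restriction of an invertible sheaf to it is $\OO_{\mathbbm{P}^1}(\deg)$ with $\deg = (-D_i)\cdot E^{(i)}$. Since the construction of $D'$ terminates (by the previous lemma, $D_i \le D'$ for all $i$, and the multiplicities are bounded), the chain is finite and the argument closes. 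No vanishing theorem beyond the elementary $h^0(\OO_{\mathbbm{P}^1}(d)) = 0$ for $d < 0$ is needed.
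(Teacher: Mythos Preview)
Your proof is correct and follows the same route as the paper: step through the chain $D = D_0 \leqslant D_1 \leqslant \cdots \leqslant D'$ using the exact sequence $0 \to \OO_E(-D_i) \to \OO_{D_{i+1}} \to \OO_{D_i} \to 0$ at each step. The one difference is that you extract the inequality directly from the injection on $H^0$ (since $H^0(\OO_{\mathbbm{P}^1}(-D_i \cdot E)) = 0$ for $D_i \cdot E > 0$), whereas the paper applies $\chi$ to the sequence and invokes rationality of $\pi_0$ to identify $\chi$ with $h^0$; your version is slightly more elementary and does not need that hypothesis. One small correction: the exceptional curves are $\mathbbm{P}^1$'s not merely because $Z_0$ is smooth, but because $Z$ has rational (indeed log terminal) singularities --- smoothness of the resolution alone says nothing about the genus of an exceptional curve.
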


\begin{proof}
  It suffices to show that $h^0 ( \OO_{D_i} ) \geqslant h^0 ( \OO_{D_{i + 1}}
  )$. By construction $D_{i + 1} = D_i + E$ for some exceptional curve $E$
  with $E \cdot D_i > 0$. Applying $\chi$ to the exact sequence $0
  \longrightarrow \OO_E ( - D_i ) \longrightarrow \OO_{D_{i + 1}}
  \longrightarrow \OO_{D_i} \longrightarrow 0$ and using the fact that $\pi_0$
  is a rational resolution, we obtain
  \begin{eqnarray*}
    h^0 ( \OO_{D_{i + 1}} ) & = & \chi ( \OO_E ( - D_i ) ) + h^0 ( \OO_{D_i} )
    .
  \end{eqnarray*}
  Since $E \simeq \mathbbm{P}^1$ and $E \cdot D_i > 0$, we have
  \begin{eqnarray*}
    h^0 ( \OO_{D_{i + 1}} ) & = & 1 - E \cdot D_i + h^0 ( \OO_{D_i} )
    \leqslant h^0 ( \OO_{D_i} ) .
  \end{eqnarray*}
\end{proof}

\subsection{Bounding $- D^2$}

We gather here a few facts about numerical invariants of singularities. Let
$\pi_0 : Z_0 \longrightarrow Z$ be a resolution of a rational surface
singularity. We denote by $b_i = - E_i^2$ for exceptional curves $E_1, \ldots,
E_r$. Recall that its multiplicity can be expressed in terms of the numerical
cycle by the formula $m = - Z^2_{\tmop{num}}$ (c.f. {\cite{reid-chapters}},
section 4.17). If, in addition, $Z$ has log terminal singularities, then $m =
- Z^2_{\tmop{num}}$ simplifies to
\begin{eqnarray}
  m & = & 2 + \sum_{i = 1}^r ( b_i - 2 ) .  \label{multiplicityformula}
\end{eqnarray}
This was observed in {\cite{brieskorn}}, proof of Satz 2.11, and can be
deduced from the following proposition, which we will also need for the proof
of Proposition \ref{decompositionprop}.

\begin{proposition}
  \label{numericalcycleoflogterminalsingularities}Let $\pi_0 : Z_0
  \longrightarrow Z$ be the minimal resolution of a log terminal singularity,
  with exceptional curves $E_1, \ldots, E_r$ and numerical cycle
  $Z_{\tmop{num}}$. If $b_i > 2$, then the multiplicity of $E_i$ in
  $Z_{\tmop{num}}$ is $1$.
\end{proposition}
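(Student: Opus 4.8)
The plan is to prove that if $b_i = -E_i^2 > 2$ then the coefficient of $E_i$ in the numerical cycle $Z_{\tmop{num}}$ equals $1$, by exploiting the structure theory of log terminal (equivalently, quotient) surface singularities: their minimal resolutions are exactly the dual graphs that arise as trees consisting of a central node (or a chain, in the cyclic case) together with chains of $(-2)$-curves emanating from it, with strong numerical constraints on where curves of self-intersection $< -2$ can sit. So the curves $E_i$ with $b_i > 2$ are rather special, and I expect them to be ``extremal'' enough in the weighted dual graph that the numerical cycle cannot pick up a coefficient larger than $1$ on them.

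\textbf{Key steps.} First I would recall the defining property: $Z_{\tmop{num}} = \sum_i z_i E_i$ is the minimal effective exceptional divisor with $Z_{\tmop{num}} \cdot E_j \le 0$ for all $j$, and it can be built by the Laufer-type algorithm in Section 4.5 of \cite{reid-chapters}, starting from $\sum E_i$ and repeatedly adding a curve $E$ whenever the current cycle has strictly positive intersection with $E$. The all-ones cycle $E_{\tmop{red}} = \sum E_i$ satisfies $E_{\tmop{red}} \cdot E_i = b_i - (\text{number of neighbours of } E_i)$; since the dual graph of a log terminal singularity is a tree, the number of neighbours of $E_i$ is small, and when $b_i > 2$ one checks (using the classification/connectedness) that $E_{\tmop{red}} \cdot E_i < 0$ already, so the algorithm, once it has produced any cycle $D$ with $D \ge E_{\tmop{red}}$, never has cause to increase the coefficient of such an $E_i$. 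The second step is to make this monotonicity precise: I would show by induction along the algorithm that every cycle $D_k$ produced satisfies: the coefficient of any $E_i$ with $b_i > 2$ stays equal to $1$. For the inductive step, if the algorithm adds $E$ to $D_k$ because $D_k \cdot E > 0$, then $E$ itself has $b_E = 2$ (it lies in a $(-2)$-chain) — because if $b_E > 2$ we would need $D_k \cdot E > 0$ with the coefficient of $E$ in $D_k$ already $1$, and a short computation with the tree structure shows $D_k \cdot E \le 0$ in that case. So only $(-2)$-curves ever get their coefficients bumped, and the coefficients on the high-$b$ curves are frozen at $1$. The third step is to invoke formula (\ref{multiplicityformula}) / the forward reference in Proposition \ref{decompositionprop} consistently, or rather to note that this proposition is what \emph{proves} (\ref{multiplicityformula}): once the coefficients on $b_i > 2$ curves are $1$, a direct computation of $-Z_{\tmop{num}}^2 = -Z_{\tmop{num}} \cdot Z_{\tmop{num}} = \sum_i z_i (-Z_{\tmop{num}} \cdot E_i) $ combined with $Z_{\tmop{num}} \cdot E_i = 0$ on the ``interior'' gives $m = 2 + \sum (b_i - 2)$ directly.

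\textbf{Main obstacle.} The delicate point is the inductive claim that the numerical-cycle algorithm never needs to add a curve $E$ with $b_E > 2$ once we are above $E_{\tmop{red}}$ — equivalently, that $D \cdot E \le 0$ whenever $E$ has self-intersection $< -2$ and $E$ appears with coefficient exactly $1$ in an effective cycle $D$ lying between $E_{\tmop{red}}$ and $Z_{\tmop{num}}$. This is really a statement about the local shape of the log terminal dual graph near a high-valence or high-weight vertex, and proving it cleanly will require either (a) the explicit classification of log terminal (quotient) dual graphs — the cyclic $A_{n}$-type chains, the $D$- and $E$-type stars with one central node — and a case check that $D\cdot E \le 0$ there, or (b) a weighted-graph argument: if $E$ is a $b_E > 2$ vertex with neighbours $F_1,\dots,F_t$ appearing in $D$ with coefficients $d_1,\dots,d_t$, then $D\cdot E = -b_E + \sum d_j \le -b_E + (\text{something bounded by the neighbours' coefficients in } Z_{\tmop{num}})$, and one must show this is $\le 0$. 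Because log terminal singularities have the property that along each $(-2)$-chain hanging off the central node the numerical-cycle coefficients decrease step by step down to $1$ at the leaves, the neighbours of a high-weight $E$ carry coefficients at most comparable to, and in the relevant configurations strictly less than contributing more than $b_E$, so the inequality holds; turning this into a clean uniform argument — ideally without a full case analysis — is where the real work lies. I would try the weighted-graph route first and fall back on the classification only if needed.
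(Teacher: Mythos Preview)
Your outline is correct and ultimately converges on the same argument the paper gives, which is a direct case check against the classification of log terminal (quotient) dual graphs from \cite{nikulin}. The paper first makes your ``first step'' observation: if every vertex satisfies $v(i)\le b_i$ then $E_{\mathrm{red}}\cdot E_i = v(i)-b_i\le 0$, so $Z_{\mathrm{num}}=E_{\mathrm{red}}$ is reduced and there is nothing to prove. It then lists the finitely many log terminal graphs that have both a vertex with $v(i)>b_i$ and some vertex with $b_j>2$, writes down $Z_{\mathrm{num}}$ explicitly in each case, and reads off that the $b_j>2$ curves appear with multiplicity~$1$.

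Your Laufer-algorithm framing is a nice conceptual wrapper --- it explains \emph{why} the $b>2$ curves never get bumped rather than just verifying it --- but the inductive step you correctly flag as the main obstacle (that a curve $E$ with $b_E>2$ and coefficient~$1$ in $D$ always has $D\cdot E\le 0$) is precisely the statement that requires the classification. It amounts to knowing that such a vertex has valence at most~$2$ and that its neighbours' coefficients in $Z_{\mathrm{num}}$ sum to at most $b_E$; neither fact is available without knowing the admissible graph shapes. So your route~(b) collapses into route~(a), and the paper simply does~(a) from the start. If you want to present the argument your way, it works, but expect to invoke the classification at the key step rather than avoid it.
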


\begin{proof}
  We refer the reader to {\cite{nikulin}}, Figure 1, for the intersection
  graphs of the exceptional curves on minimal resolutions of log terminal
  singularities. Let $\Gamma$ be such a graph and let $v ( i )$ denote the
  number of edges incident on a vertex $i$. The first observation is if $v ( i
  ) \leqslant b_i$ for all vertices $i$, then the numerical cycle
  $Z_{\tmop{num}}$ is reduced. In particular, the proposition holds for such
  graphs $\Gamma$. The graphs $\Gamma$ for which there exist vertices $i, j$
  such that $v ( i ) < b_i$ and $b_j > 2$ have the following forms
  
  \vspace{0.5cm}
$ \begin{array}[c]{lll}
1.&  \begin{tikzpicture}  
    \draw[very thick,blue] (-1,-1) -- (0,0);
    \draw[very thick,blue] (-1,1) -- (0,0);
    \draw[very thick,blue] (0,0) -- (1,0);
    \draw[very thick,dotted,blue] (1,0) -- (2,0);
    \draw[very thick,blue] (2,0) -- (3,0);
    \draw[fill=red] (-1,1) circle (0.08) node[above] {$b_{r-1}$};
    \draw[fill=red] (-1,-1) circle (0.08) node[below] {$b_{r}$};
    \draw[fill=red] (0,0) circle (0.08) node[above] {$2$};
    \draw[fill=red] (1,0) circle (0.08) node[above] {$b_2$};
    \draw[fill=red] (2,0) circle (0.08) node[above] {$b_{r-3}$};
    \draw[fill=red] (3,0) circle (0.08) node[above] {$b_{r-2}$};
\end{tikzpicture}
& \text{where } b_i \geq 2 \text{ for } i=1,\ldots,r.\\
2.&  \begin{tikzpicture}  
    \draw[very thick,blue] (-2,0) -- (-1,0);
    \draw[very thick,blue] (-1,0) -- (0,0);
    \draw[very thick,blue] (0,0) -- (1,0);
    \draw[very thick,blue] (1,0) -- (2,0);
    \draw[very thick,blue] (0,0) -- (0,-1);
    \draw[fill=red] (-2,0) circle (0.08) node[above] {$b_1$};
    \draw[fill=red] (-1,0) circle (0.08) node[above] {$b_2$};
    \draw[fill=red] (0,0) circle (0.08) node[above] {$2$};
    \draw[fill=red] (1,0) circle (0.08) node[above] {$2$};
    \draw[fill=red] (2,0) circle (0.08) node[above] {$2$};
    \draw[fill=red] (0,-1) circle (0.08) node[below] {$2$};
  \end{tikzpicture}
& \text{where }(b_1,b_2) = (2,3),(3,2).
\end{array} $\\
\vspace{0.5cm}

  In case 1, let $j$ be the minimal integer such that $b_i = 2$ for all $i <
  j$. Then the numerical cycle is $2 ( E_1 + \cdots + E_{j - 1} ) + E_j +
  \cdots + E_r$. For case 2, the numerical cycles are, respectively,
  
  \begin{center}
  \begin{tikzpicture}
    \draw[very thick,blue] (-2,0) -- (-1,0);
    \draw[very thick,blue] (-1,0) -- (0,0);
    \draw[very thick,blue] (0,0) -- (1,0);
    \draw[very thick,blue] (1,0) -- (2,0);
    \draw[very thick,blue] (0,0) -- (0,-1);

    \draw[fill=red] (-2,0) circle (0.08) node[above] {$1$};
    \draw[fill=red] (-1,0) circle (0.08) node[above] {$1$};
    \draw[fill=red] (0,0) circle (0.08) node[above] {$2$};
    \draw[fill=red] (1,0) circle (0.08) node[above] {$2$};
    \draw[fill=red] (2,0) circle (0.08) node[above] {$1$};

    \draw[fill=red] (0,-1) circle (0.08) node[below] {$1$};
  \end{tikzpicture}\quad
and
\quad
  \begin{tikzpicture}
    \draw[very thick,blue] (-2,0) -- (-1,0);
    \draw[very thick,blue] (-1,0) -- (0,0);
    \draw[very thick,blue] (0,0) -- (1,0);
    \draw[very thick,blue] (1,0) -- (2,0);
    \draw[very thick,blue] (0,0) -- (0,-1);

    \draw[fill=red] (-2,0) circle (0.08) node[above] {$1$};
    \draw[fill=red] (-1,0) circle (0.08) node[above] {$2$};
    \draw[fill=red] (0,0) circle (0.08) node[above] {$3$};
    \draw[fill=red] (1,0) circle (0.08) node[above] {$2$};
    \draw[fill=red] (2,0) circle (0.08) node[above] {$1$};

    \draw[fill=red] (0,-1) circle (0.08) node[below] {$2$};
  \end{tikzpicture}
\end{center}

  where the numbers above a vertex indicate its multiplicity in
  $Z_{\tmop{num}}$. We have thus shown that for any graph $\Gamma$, $b_i > 2$
  implies that the multiplicity of $E_i$ in $Z_{\tmop{num}}$ is $1$.
\end{proof}

We denote by $\alpha = ( \alpha_1, \ldots, \alpha_r )$ the vector whose
entries are the discrepancies $\alpha_i$ of the exceptional curves of $\pi_0 :
Z_0 \longrightarrow Z$. The vector $\alpha$ can be expressed in terms of the
intersection matrix $I = ( E_i E_j )$ and the vector $v_I = ( E_1^2 + 2,
\ldots, E_r^2 + 2 )$ as
\begin{eqnarray}
  \alpha & = & - I^{- 1} v_I .  \label{discrepancyI}
\end{eqnarray}
Note that these $\alpha_i$'s are different from the discrepancies of the order
$a_i$ introduced earlier.

\begin{proposition}
  \label{estimateonD2}Let $D = \sum_{i = 1}^r n_i E_i$ be an effective
  exceptional divisor on $Z_0$ and $s$ be the minimal integer such that $D
  \leqslant s Z_{\tmop{num}}$. Then
  \begin{eqnarray*}
    - D^2 & \geqslant & 2 s + \sum_{i = 1}^r ( b_i - 2 ) n_i .
  \end{eqnarray*}
\end{proposition}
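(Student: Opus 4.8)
The plan is to reduce the estimate to an induction on $s$, the minimal integer with $D \leqslant s Z_{\tmop{num}}$, using the recursive construction of the modified numerical cycle $D'$ together with the multiplicity formula \eqref{multiplicityformula}. The base case $s=1$ corresponds to $D \leqslant Z_{\tmop{num}}$, and here I would first prove the bound for $D = Z_{\tmop{num}}$ itself (where it is an equality, since $-Z^2_{\tmop{num}} = m = 2 + \sum(b_i-2)$ and every coefficient $n_i$ of $Z_{\tmop{num}}$ that multiplies a $b_i > 2$ equals $1$ by Proposition \ref{numericalcycleoflogterminalsingularities}, so $\sum (b_i - 2) n_i = \sum(b_i-2)$ over the relevant indices), and then handle a general $D \leqslant Z_{\tmop{num}}$ by passing to the modified numerical cycle.

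The key observation is the effect of a single step $D_i \mapsto D_{i+1} = D_i + E$ in the recursive construction, where $E = E_k$ is an exceptional curve with $D_i \cdot E_k > 0$. We compute $-D_{i+1}^2 = -D_i^2 - 2(D_i \cdot E_k) - E_k^2 = -D_i^2 + b_k - 2(D_i\cdot E_k) \leqslant -D_i^2 + b_k - 2$, since $D_i \cdot E_k \geqslant 1$. Meanwhile the coefficient of $E_k$ increases by $1$, so the term $\sum_j (b_j - 2) n_j$ increases by exactly $b_k - 2$. Thus the quantity $-D_i^2 - \sum_j (b_j-2) n_j^{(i)}$ is \emph{non-increasing} along the recursion, so $-D^2 - \sum (b_i - 2) n_i \geqslant -(D')^2 - \sum (b_i - 2) n_i'$ where $D'$ is the modified numerical cycle associated to $D$. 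This reduces the inequality for $D$ to the inequality for $D'$, which satisfies $D \leqslant D'$ (so its $s$-value is at least that of $D$, but I will need them comparable — see below) and has $-D'$ being $\pi_0$-nef.

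The remaining work is to prove the bound when $-D$ is $\pi_0$-nef. Here I would induct on $s$: write $D = D_0$ with $D_0 \leqslant s Z_{\tmop{num}}$ and $s$ minimal. If $s = 1$, the support-coefficient argument above (using Proposition \ref{numericalcycleoflogterminalsingularities} to control the coefficients of curves with $b_i > 2$, which are still $1$ in any $\pi_0$-nef divisor bounded by $Z_{\tmop{num}}$) gives $-D^2 \geqslant 2 + \sum(b_i-2)n_i$; actually one gets $-D^2 \geqslant -D\cdot Z_{\tmop{num}} \geqslant$ the right quantity by a direct intersection estimate, since $-D\cdot E_i \geqslant 0$ and pairing against $Z_{\tmop{num}} = \sum m_i E_i$ produces $-D \cdot Z_{\tmop{num}} = \sum m_i(-D\cdot E_i)$. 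For $s > 1$, write $D = (D - Z_{\tmop{num}}) + Z_{\tmop{num}}$ where $D - Z_{\tmop{num}}$ is effective (since $D \leqslant s Z_{\tmop{num}}$ and $-D$ nef forces $Z_{\tmop{num}} \leqslant D$ by minimality of the numerical cycle), apply $-D^2 = -D\cdot(D - Z_{\tmop{num}}) - D \cdot Z_{\tmop{num}}$, bound $-D\cdot Z_{\tmop{num}} \geqslant 2 + \sum(b_i - 2)m_i$ as in the $s=1$ case, and bound $-D\cdot(D-Z_{\tmop{num}}) \geqslant -(D-Z_{\tmop{num}})^2$ using $-D$ nef, then apply the inductive hypothesis to $D - Z_{\tmop{num}}$ whose $s$-value drops by $1$ while its coefficients drop by exactly $m_i$.

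The main obstacle I anticipate is the bookkeeping in the inductive step: making sure that when I subtract $Z_{\tmop{num}}$, the quantity $\sum(b_i-2)n_i$ decreases by precisely $\sum(b_i-2)m_i$ and that this matches the $2$ dropped from $2s$ to $2(s-1)$ plus the $\sum(b_i-2)m_i$ supplied by the $-D\cdot Z_{\tmop{num}}$ term — in other words, verifying that the two contributions add up exactly rather than leaving a gap. This hinges on the sharp identity $-Z^2_{\tmop{num}} = 2 + \sum(b_i-2)$, i.e.\ that \eqref{multiplicityformula} holds with the full sum over all $i$ (equivalently that $\sum(b_i-2)m_i = \sum(b_i - 2)$, which is exactly the content of Proposition \ref{numericalcycleoflogterminalsingularities}), so the log terminal hypothesis enters precisely here.
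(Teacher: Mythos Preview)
Your approach is structurally the same as the paper's: both pass from $D$ to its modified numerical cycle $D'$ and then peel off $Z_{\tmop{num}}$ to induct on $s$. The paper packages this cohomologically, writing $-D^2 - \sum_i (b_i-2)n_i = 2h^0(\OO_D)$ via adjunction and rationality, and then proving $h^0(\OO_D) \geqslant s$; Lemma~\ref{firstlemma} is precisely your Step~1 observation, since the change $2h^0(\OO_{D_{i+1}}) - 2h^0(\OO_{D_i}) = 2 - 2D_i\cdot E_k$ computed there is exactly your computed change in $-D_i^2 - \sum_j(b_j-2)n_j^{(i)}$. Your version is the same argument done purely with intersection numbers, bypassing the cohomological intermediary; this is a little more elementary but leans on the log-terminal identity \eqref{multiplicityformula} and Proposition~\ref{numericalcycleoflogterminalsingularities} where the paper gets by with rationality alone.

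One point to tighten: you set up the induction only for $\pi_0$-nef $D$, but $D - Z_{\tmop{num}}$ need not be nef, so your inductive hypothesis as written does not apply to it. The fix is to run the induction on $s$ for arbitrary effective $D$, folding your Step~1 reduction into each inductive step; this works because $s(D') = s(D)$ (indeed $sZ_{\tmop{num}} \geqslant D$ is itself anti-nef, so $D' \leqslant sZ_{\tmop{num}}$ by minimality of $D'$), and then $s(D' - Z_{\tmop{num}}) = s-1$. The bookkeeping you flag as the main obstacle is in fact fine: $-D'\cdot Z_{\tmop{num}} \geqslant -Z_{\tmop{num}}^2 = 2 + \sum_i(b_i-2)m_i$ follows from $-Z_{\tmop{num}}$ nef and $D' - Z_{\tmop{num}}$ effective, and the accounting closes exactly. (Minor correction: the inequality $-D\cdot(D-Z_{\tmop{num}}) \geqslant -(D-Z_{\tmop{num}})^2$ uses $-Z_{\tmop{num}}$ nef, not $-D$ nef.)
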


\begin{proof}
  The adjunction formula for a divisor on a surface gives
  \begin{eqnarray*}
    - D^2 & = & 2 h^0 ( \OO_D ) + K_{\tilde{Z}} D,
  \end{eqnarray*}
  and from equation (\ref{discrepancyI}), we compute $K_{\tilde{Z}} D =
  \sum_{i = 1}^r ( b_i - 2 ) n_i$. We show by induction that if $s$ is the
  minimal integer such that $D \leqslant s Z_{\tmop{num}}$ then $h^0 ( \OO_D )
  \geqslant s$, which completes the proof. The implication is trivial for $s =
  0$, and we suppose that it holds for $s - 1$. By Lemma \ref{firstlemma}, we
  have $h^0 ( \OO_D ) \geqslant h^0 ( \OO_{D'} )$ where $D'$ is the numerical
  cycle associated to $D$. Now $- D'$ is $\sigma$-nef, so $D' -
  Z_{\tmop{num}}$ is effective. Taking Euler characteristics of the exact
  sequence, $0 \longrightarrow \OO_{D' - Z_{\tmop{num}}} ( - Z_{\tmop{num}} )
  \longrightarrow \OO_{D'} \longrightarrow \OO_{Z_{\tmop{num}}}
  \longrightarrow 0$, we obtain $h^0 ( \OO_{D'} ) = \chi ( \OO_{D' -
  Z_{\tmop{num}}} ( - Z_{\tmop{num}} ) ) + h^0 ( \OO_{Z_{\tmop{num}}} )$.
  Since $- Z_{\tmop{num}}$ is $\sigma$-nef and $h^0 ( \OO_{Z_{\tmop{num}}} )
  \geqslant 1$ we have
  \begin{eqnarray*}
    h^0 ( \OO_{D'} ) & \geqslant & h^0 ( \OO_{D' - Z_{\tmop{num}}} ) + 1.
  \end{eqnarray*}
  Now $s$ is the minimal integer such that $D' \leqslant s Z_{\tmop{num}}$, so
  $s - 1$ is the minimal integer such that $D' - Z_{\tmop{num}} \leqslant ( s
  - 1 ) Z_{\tmop{num}}$. By the induction hypothesis, we conclude that $h^0 (
  \OO_D ) \geqslant h^0 ( \OO_{D'} ) \geqslant s - 1 + 1 = s$.
\end{proof}

With this result, we can show that log terminal orders whose centres have
canonical singularities are numerically rational.

\begin{proposition}
  \label{fboundforcanonical}Let $\pi_0 : Z_0 \longrightarrow Z$ be a minimal
  resolution of a canonical surface singularity and $g : \tmmathbf{E}
  \longrightarrow \mathbbm{Q}$ be a function satisfying the hypotheses of
  Theorem \ref{technicaltheorem}. Then $g ( E ) > 0$ for all $E \in
  \tmmathbf{E}^+_0 \backslash \{ 0 \}$ if and only if $g ( Z_{\tmop{num}} ) >
  0$.
\end{proposition}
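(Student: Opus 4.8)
The plan is to combine the numerical description of canonical surface singularities with the self-intersection estimate of Proposition~\ref{estimateonD2}. First I would record that on the minimal resolution $\pi_0 : Z_0 \longrightarrow Z$ of a canonical singularity every exceptional curve is a $(-2)$-curve: the minimal resolution of a canonical surface singularity is crepant, so the discrepancy vector $\alpha = (\alpha_1, \ldots, \alpha_r)$ vanishes, and by (\ref{discrepancyI}) this forces $v_I = 0$, i.e.\ $b_i := -E_i^2 = 2$ for all $i$. The multiplicity formula (\ref{multiplicityformula}) then gives $-Z_{\tmop{num}}^2 = 2$, so that $g(Z_{\tmop{num}}) = 2 + \ell(Z_{\tmop{num}})$.

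The forward implication is immediate, since $Z_{\tmop{num}} \in \tmmathbf{E}^+_0 \backslash \{0\}$. For the converse, assume $g(Z_{\tmop{num}}) > 0$, equivalently $\ell(Z_{\tmop{num}}) > -2$. Let $D = \sum_i n_i E_i$ be an arbitrary element of $\tmmathbf{E}^+_0 \backslash \{0\}$ and let $s \geqslant 1$ be the least integer with $D \leqslant s Z_{\tmop{num}}$; such $s$ exists because the exceptional locus is connected and hence $Z_{\tmop{num}}$ has strictly positive multiplicity along every $E_i$. By Proposition~\ref{estimateonD2} together with $b_i = 2$ we get $-D^2 \geqslant 2s$. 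To bound the linear part, write $s Z_{\tmop{num}} - D = \sum_i f_i E_i$ with all $f_i \geqslant 0$; since $\ell(E_i) \leqslant 0$ by hypothesis, $\ell(D) = s\,\ell(Z_{\tmop{num}}) - \sum_i f_i \ell(E_i) \geqslant s\,\ell(Z_{\tmop{num}})$. Combining,
\begin{eqnarray*}
  g ( D ) & = & - D^2 + \ell ( D ) \;\geqslant\; 2 s + s\, \ell ( Z_{\tmop{num}} ) \;=\; s\, g ( Z_{\tmop{num}} ) \;\geqslant\; g ( Z_{\tmop{num}} ) \;>\; 0 .
\end{eqnarray*}

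The argument is short, and the only external inputs are Proposition~\ref{estimateonD2} and equations (\ref{discrepancyI}), (\ref{multiplicityformula}); I do not expect a genuine obstacle here. The point requiring care is the bookkeeping around the integer $s$: Proposition~\ref{estimateonD2} has an induction on $s$ built into it which uses \emph{minimality} of $s$, so one must take $s$ least with $D \leqslant s Z_{\tmop{num}}$, and one must check that $s Z_{\tmop{num}} - D$ is genuinely effective so that the estimate $\ell(D) \geqslant s\,\ell(Z_{\tmop{num}})$ uses nothing beyond $\ell(E_i) \leqslant 0$.
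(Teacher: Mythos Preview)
Your proof is correct and follows essentially the same route as the paper: pick the minimal $s$ with $D \leqslant s Z_{\tmop{num}}$, use Proposition~\ref{estimateonD2} together with $b_i = 2$ to get $-D^2 \geqslant 2s$, use $\ell(E_i) \leqslant 0$ to get $\ell(D) \geqslant s\,\ell(Z_{\tmop{num}})$, and conclude $g(D) \geqslant s\,g(Z_{\tmop{num}}) > 0$. Your extra justification for $b_i = 2$ via (\ref{discrepancyI}) and the explicit check that $s \geqslant 1$ are reasonable additions, but the argument is the paper's own.
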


\begin{proof}
  Let $E$ be an effective divisor on $Z_0$ and $s$ be the minimal integer such
  that $E \leqslant s Z_{\tmop{num}}$. The above proposition applied to the
  case of a canonical surface singularity yields $- E^2 \geqslant 2 s$. For
  the linear term $\ell$ in $g ( E ) = - E^2 + \ell ( E )$, recall that $\ell
  ( E_i ) \leqslant 0$ for all $i$, hence $\ell ( E ) \geqslant \ell ( s
  Z_{\tmop{num}} )$. It follows then
  \begin{eqnarray*}
    g ( E ) & \geqslant & 2 s + \ell ( s Z_{\tmop{num}} ) = s ( -
    Z_{\tmop{num}}^2 + \ell ( Z_{\tmop{num}} ) ) = s g ( Z_{\tmop{num}} ) > 0.
  \end{eqnarray*}
\end{proof}

The final task is to generalise Proposition \ref{fboundforcanonical} for log
terminal singularities with higher multiplicities. Our strategy is to
decompose $D$ as the sum of two effective divisors $D = D_1 + D_2$ with $D_1
D_2 \leqslant 0$. Then we have $g ( D ) \geqslant g ( D_1 ) + g ( D_2 ) - 2
D_1 D_2 \geqslant g ( D_1 ) + g ( D_2 )$ and the problem is reduced to showing
$g ( D_i ) \geqslant 0$ for $i = 1, 2$.

Let $D = \sum_{j \in I} n_j E_j$ with $n_j > 0$ where $I \subseteq [ 1, r ]$,
and we assume $\tmop{supp} D$ is connected. Then $\tmop{supp} D$ contracts to
a log terminal singularity, and we define the multiplicity $m ( D )$ of $D$ to
be the multiplicity of the contracted singularity. The number $m ( D )$ can be
computed by modifying equation (\ref{multiplicityformula}) appropriately,
\begin{eqnarray}
  m ( D ) & = & 2 + \sum_{j \in I} ( b_j - 2 ),  \label{multiplicityeq2}
\end{eqnarray}
and by definition of $D_{\tmop{num}}$ we have $m ( D ) = - D^2_{\tmop{num}}$.
Since $\pi_0$ is a minimal resolution, we have $m ( D ) \geqslant 2$. If $m (
D ) = 2$, then let $D_1 = D$ and $D_2 = 0$. If $m ( D ) > 2$, then let $n$ be
the positive integer $n = \min \{ n_i \mid - E_i^2 > 2 \}$ and define $D_1 =
\gcd ( n D_{\tmop{num}}, D )$, $D_2 = D - D_1$.

\begin{proposition}
  \label{decompositionprop}Let $D$ be a connected effective exceptional
  divisor on $Z_0$ and $D = D_1 + D_2$ be the decomposition above. Let $g :
  \tmmathbf{E} \longrightarrow \mathbbm{Q}$ be a function satisfying the
  hypotheses of Theorem \ref{technicaltheorem}. Then
  \begin{enumerate}
    \item $D_1 D_2 \leqslant 0$
    
    \item If $g ( D_{\tmop{num}} ) > 0$, then $g ( D_1 ) > 0$
    
    \item for each connected component $C$ of $D_2$, we have $m ( C ) < m ( D
    )$.
  \end{enumerate}
\end{proposition}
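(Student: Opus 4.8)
The plan is to argue entirely in terms of the coefficient vectors. Write $D_{\tmop{num}} = \sum_{j\in I} c_j E_j$ and $D = \sum_{j\in I} n_j E_j$, so by construction $D_1 = \sum_{j} \min(nc_j, n_j)\,E_j$ and $D_2 = \sum_{j} \max(n_j - nc_j, 0)\,E_j$. The structural input I would record at the outset is that, by Proposition \ref{numericalcycleoflogterminalsingularities} applied to the contraction $\pi_D : Z_0 \to Z_D$, one has $c_j = 1$ whenever $b_j > 2$. When $m(D) = 2$ all $b_j = 2$, so $\tmop{supp} D$ is a connected negative definite configuration of $(-2)$-curves, $D_1 = D$ and $D_2 = 0$, and parts (1), (3) are vacuous while part (2) is Proposition \ref{fboundforcanonical} applied to the canonical singularity $Z_D$; so assume henceforth $m(D) > 2$ and fix an index $i_0$ with $b_{i_0} > 2$ and $n_{i_0} = n$. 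Two consequences are immediate: for every $j$ with $b_j > 2$ the coefficient of $E_j$ in $D_1$ is $\min(nc_j, n_j) = \min(n, n_j) = n$ (using $n \le n_j$, which is how $n$ was chosen), and the coefficient of $E_{i_0}$ in $D_2$ is $\max(n_{i_0} - nc_{i_0}, 0) = 0$. Everything below flows from these.

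For part (1) I would rewrite $D_1 = n D_{\tmop{num}} - F$, where $F = \sum_{j}\max(nc_j - n_j, 0)\,E_j$ is effective and $\tmop{supp} F$ is disjoint from $\tmop{supp} D_2$ (they are cut out by the conditions $nc_j > n_j$ and $n_j > nc_j$). Then for each component $E_i$ of $D_2$,
\[
  D_1 \cdot E_i \;=\; n\,(D_{\tmop{num}} \cdot E_i) \;-\; F\cdot E_i \;\le\; 0 ,
\]
the first term being $\le 0$ because $-D_{\tmop{num}}$ is $\pi_D$-nef and $E_i$ is $\pi_D$-exceptional, and $F\cdot E_i \ge 0$ because $E_i$ is not a component of $F$. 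Pairing this against $D_2 = \sum_{E_i \subseteq \tmop{supp} D_2}(n_i - nc_i)E_i$, whose coefficients are positive, gives $D_1 D_2 \le 0$.

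For part (3), use $m(C) = 2 + \sum_{E_j \subseteq C}(b_j - 2)$ and the identical formula for $m(D)$, all summands being nonnegative. The curve $E_{i_0}$ lies in $\tmop{supp} D$ but, by the computation above, not in $\tmop{supp} D_2$, hence in no connected component $C$ of $D_2$; dropping its contribution gives $m(C) \le m(D) - (b_{i_0} - 2) < m(D)$, since $b_{i_0} > 2$.

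Part (2) is the crux. Let $s_1$ be the least positive integer with $D_1 \le s_1 D_{\tmop{num}}$; since $D_1 \le n D_{\tmop{num}}$ we have $1 \le s_1 \le n$. Because $\tmop{supp} D_1 = \tmop{supp} D$ is connected and contracts through $\pi_D$ to a log terminal, hence rational, singularity, the argument proving Proposition \ref{estimateonD2} applies to $\pi_D$ and $D_1$ and gives $-D_1^2 \ge 2 s_1 + K_{Z_0} D_1$; moreover $K_{Z_0} D_1 = \sum_j (b_j - 2) m_j$ with $m_j$ the coefficient of $E_j$ in $D_1$ (as in that proof), which by the observation above equals $n(m(D) - 2)$. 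Since $s_1 D_{\tmop{num}} - D_1$ is effective and $\ell(E_i) \le 0$ for all $i$, also $\ell(D_1) \ge s_1 \ell(D_{\tmop{num}})$. Combining these with $n \ge s_1$, $m(D) - 2 \ge 0$ and $-D_{\tmop{num}}^2 = m(D)$,
\[
  g(D_1) = -D_1^2 + \ell(D_1) \;\ge\; 2 s_1 + n(m(D)-2) + s_1 \ell(D_{\tmop{num}}) \;\ge\; s_1\bigl(-D_{\tmop{num}}^2 + \ell(D_{\tmop{num}})\bigr) = s_1\, g(D_{\tmop{num}}) \;>\; 0 ,
\]
since $s_1 \ge 1$ and $g(D_{\tmop{num}}) > 0$. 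The one genuinely delicate point --- and the reason Proposition \ref{numericalcycleoflogterminalsingularities} is needed --- is that the coefficient of each curve with $b_j > 2$ in $D_1$ is exactly $n$: this pins $K_{Z_0} D_1$ to the value $n(m(D)-2)$, which the bound $s_1 \le n$ then trades down to the term $s_1(m(D)-2)$ required to recombine into $s_1\, g(D_{\tmop{num}})$.
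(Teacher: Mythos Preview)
Your proof is correct and follows essentially the same route as the paper's: part (1) via the disjointness of the supports of $nD_{\tmop{num}}-D_1$ and $D_2$, part (3) via the absence of $E_{i_0}$ from $\tmop{supp} D_2$, and part (2) via Proposition~\ref{estimateonD2} applied to the contraction $\pi_D$ together with the key fact (from Proposition~\ref{numericalcycleoflogterminalsingularities}) that every component with $b_j>2$ appears in $D_1$ with coefficient exactly $n$. The only cosmetic difference is in part (2): the paper observes directly that $s_1=n$ (since the coefficient of $E_{i_0}$ in $D_1$ equals that in $nD_{\tmop{num}}$, namely $n$) and hence obtains $g(D_1)\ge n\,g(D_{\tmop{num}})$; you instead keep $s_1\le n$ and absorb the slack via $(n-s_1)(m(D)-2)\ge 0$ to reach $g(D_1)\ge s_1\,g(D_{\tmop{num}})$, which is equivalent.
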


\begin{proof}
  The proposition is trivial if $m ( D ) = 2$, so we assume $m ( D ) > 2$.
  First note that the effective divisors $n D_{\tmop{num}} - D_1$ and $D_2$
  have no common components, hence $n D_{\tmop{num}} D_2 \geqslant D_1 D_2$.
  Since $D_2$ is supported on $\bigcup_{i \in I} E_i$ and $- D_{\tmop{num}}
  E_i \geqslant 0$ for any $i \in I$, we have $D_{\tmop{num}} D_2 \leqslant
  0$. This proves part 1 of the proposition.
  
  Note that $D_1 \leqslant n D_{\tmop{num}}$, and we now show that $n$ is the
  minimal integer with this property. By definition of $n$, there exists an
  irreducible component $E_s$ of $D$ of multiplicity $n$ and $- E_s^2 > 2$. By
  Proposition \ref{numericalcycleoflogterminalsingularities}, the multiplicity
  of $E_s$ in $D_{\tmop{num}}$ is $1$. Hence the multiplicities of $E_s$ in $n
  D_{\tmop{num}}$ and $D_1$ are equal, so we can conclude that $n$ is the
  minimal integer such that $D_1 \leqslant n D_{\tmop{num}}$. Moreover, the
  multiplicity of $E_j$ in $D_1$ is equal to $n$ whenever $- E_j^2 > 2$. Now
  we can apply Proposition \ref{estimateonD2} and obtain the inequality
  \begin{eqnarray*}
    - D_1^2 & \geqslant & 2 n + \sum_{j \in I} ( b_j - 2 ) n
  \end{eqnarray*}
  and by (\ref{multiplicityformula}) the last expression is equal to $n m (
  D_1 )$. Since $D$ is connected, the same is true for $D_1$, so $m ( D_1 ) =
  - D_{\tmop{num}}^2$. Then by the same argument as in the proof of
  Proposition \ref{fboundforcanonical}, we have
  \begin{eqnarray*}
    g ( D_1 ) & \geqslant & n m ( D_1 ) + \ell ( n D_{\tmop{num}} ) \geqslant
    n ( - D_{\tmop{num}}^2 + \ell ( D_{\tmop{num}} ) ) = n g ( D_{\tmop{num}}
    ) > 0.
  \end{eqnarray*}
  This proves part 2 of the proposition.
  
  Since the multiplicities of $E_s$ in $D_1$ and $D$ are equal, the effective
  divisor $D_2$ is supported away from $E_s$. In particular, any connected
  component $C$ of $D_2$ is supported away from $E_s$. Since $- E_s^2 > 2$, we
  see from (\ref{multiplicityeq2}) that $m ( C )$ must be strictly less than
  $m ( D )$. This proves part 3 of the proposition.
\end{proof}

\begin{note}
  Recall that the divisor $D_{\tmop{num}}$ is a special divisor. 
\end{note}

\subsection{Proof of theorem \ref{technicaltheorem}}

We assume as in the hypothesis of Theorem \ref{technicaltheorem} that $g$ is
positive on special divisors. We wish to show that for all $D \in
\tmmathbf{E}^+_0 \backslash \{ 0 \}$, $g ( D ) > 0$. Clearly we can assume $D$
is connected. Suppose that $m ( D ) = 2$, then since $g ( D_{\tmop{num}} ) >
0$ by hypothesis, we can conclude from Proposition \ref{fboundforcanonical}
that $g ( D ) > 0$. Otherwise, let $D = D_1 + D_2$ be the decomposition from
Proposition \ref{decompositionprop}, we can conclude from parts 1 and 2 of the
same proposition that $g ( D ) \geqslant g ( D_1 ) + g ( D_2 )$, and $g ( D_1
) > 0$. It remains to show that $g ( D_2 ) > 0$. To this end, we repeat the
above argument on each connected component of $D_2$. This procedure terminates
since by part 3 of Proposition \ref{decompositionprop}, the connected
components of $D_2$ have multiplicities strictly less than that of $D$.

\section{\label{section-adjunctionformulafororders}Adjunction formula for
orders}

The adjunction formula for a divisor $D$ on a smooth surface $Z$ expresses the
Euler characteristic of $\OO_D$ in terms of intersection numbers involving $D$
and $K_Z$, (c.f. 4.11, {\cite{reid-chapters}})
\begin{eqnarray}
  \chi ( \OO_D ) & = & - \frac{1}{2} ( K_Z + D ) D.  \label{usualadjunctioneq}
\end{eqnarray}
Note that for the intersection product above to be well-defined, we require
$\tmop{supp} D$ to be a projective variety, and we keep this assumption below.
The aim of this section is to derive a similar adjunction formula for a
terminal order $A$ on $Z$, which expresses the Euler characteristic of $A$
restricted to some divisor $D$ in terms of intersection numbers involving $D$
and $K_A$. As mentioned in the introduction, the following result appears in
the unpublished work of M. Artin and A. J. de Jong.

\begin{theorem}
  \label{orderadjunction}Let $A$ be a terminal order on $Z$ of rank  $r^2$ and
  $D$ be an effective divisor whose support is projective. Then
  \begin{eqnarray}
    \chi ( A \otimes_Z \OO_D ) & = & - \frac{r^2}{2} ( K_A + D ) D 
    \label{adjunctionformulaeq}
  \end{eqnarray}
  where $K_A = K_Z + \Delta_A$ is the canonical divisor of $A$.
\end{theorem}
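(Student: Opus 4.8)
The plan is to reduce to the case where $D$ is a prime divisor, prove the formula there by combining ordinary adjunction on the surface with an identification of the first Chern class of $A$ regarded as a locally free $\OO_Z$-module, and then return to an arbitrary effective $D$ by d\'evissage. Two preliminary observations: a terminal order has smooth centre, so $Z$ is smooth; and $A$, being normal, is reflexive, hence a locally free $\OO_Z$-module, necessarily of rank $r^2$.

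For the d\'evissage, write $D = D_1 + D_2$ with the $D_i$ effective. Tensoring $0 \to \OO_{D_2}(-D_1) \to \OO_D \to \OO_{D_1} \to 0$ by the locally free sheaf $A$ stays exact, and twisting the rank-$r^2$ sheaf $A \otimes_Z \OO_{D_2}$ on the curve $D_2$ by the degree $-(D_1 D_2)$ line bundle $\OO_{D_2}(-D_1)$ changes its Euler characteristic by $-r^2 (D_1 D_2)$. Hence both $D \mapsto \chi(A \otimes_Z \OO_D)$ and $D \mapsto -\frac{r^2}{2}(K_A + D) D$ are additive on effective divisors modulo the same correction term $-r^2 (D_1 D_2)$ (for the second, this is the cross term of $-\frac{r^2}{2}(D_1+D_2)^2$); so it suffices to prove (\ref{adjunctionformulaeq}) for $D$ a prime divisor. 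For such $D$, $A \otimes_Z \OO_D$ is locally free of rank $r^2$ on the integral projective curve $D$, so Riemann--Roch on $D$ gives $\chi(A \otimes_Z \OO_D) = \deg(\det A|_D) + r^2 \chi(\OO_D) = (c_1(A) \cdot D) + r^2 \chi(\OO_D)$, where $c_1(A)$ is the first Chern class of $A$, i.e. the class of $\det A$. Combining this with (\ref{usualadjunctioneq}) reduces the theorem to the single numerical identity
\[ c_1(A) \cdot D \;=\; -\frac{r^2}{2}\, \Delta_A \cdot D \qquad\text{for every prime divisor } D \text{ on } Z. \]
(If $Z$ is merely a spectrum of a complete local ring, these Euler characteristics and intersection numbers are to be read on a projective model of a neighbourhood of $\tmop{supp} D$; they depend only on the formal neighbourhood.)

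The heart of the proof is the above identity, and the tool I would use is the reduced trace. It yields an $\OO_Z$-bilinear form $A \otimes_Z A \to \OO_Z$, hence a homomorphism $\varphi : A \to A^{\vee}$ to the $\OO_Z$-linear dual; since the reduced trace form of a central simple algebra is nondegenerate in characteristic zero, $\varphi$ is an isomorphism over the dense open locus where $A$ is Azumaya. Taking top exterior powers, $\det \varphi$ is a nonzero global section of $(\det A)^{\otimes -2}$, and its vanishing divisor is, by definition, the discriminant divisor $\tmop{disc}(A)$; thus $2 c_1(A) = -\tmop{disc}(A)$ as divisor classes. It remains to identify $\tmop{disc}(A) = r^2 \Delta_A$. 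Since this is an equality of divisors, it can be checked at the generic point of each prime divisor $D$, that is, after localising at the discrete valuation ring $\OO_{Z,D}$: there $A_D$ is a normal, hence maximal-in-codimension-one, order in a degree-$r$ central simple $k(Z)$-algebra, tamely ramified because $\tmop{char} k = 0$. The structure theory of such orders over a discrete valuation ring, equivalently the local normal forms for terminal orders along a ramified curve in \cite{CI1}, shows that the $D$-adic valuation of the discriminant of $A_D$ is $r^2(1 - 1/e_D)$ — exactly the coefficient occurring in $\Delta_A$. Summing over the ramification curves, $\tmop{disc}(A) = \sum_D r^2(1 - 1/e_D) D = r^2 \Delta_A$, so $2 c_1(A) = -r^2 \Delta_A$. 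Substituting into the displayed reduction, $\chi(A \otimes_Z \OO_D) = c_1(A) \cdot D - \frac{r^2}{2}(K_Z + D)D = -\frac{r^2}{2}(K_Z + \Delta_A + D)D = -\frac{r^2}{2}(K_A + D)D$, which is (\ref{adjunctionformulaeq}).

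I expect the step carrying the real content to be the computation $\tmop{disc}(A) = r^2 \Delta_A$ — pinning down the multiplicity $r^2(1 - 1/e_D)$ of the discriminant along each ramified curve. This is also where the hypotheses enter: smoothness of $Z$ (so that (\ref{usualadjunctioneq}) and Riemann--Roch apply), characteristic zero (tameness, so that the local structure of $A_D$ is the standard one and $r$ is invertible), and the terminal/normal property of $A$ (so that $A_D$ is maximal in codimension one and governed by the local classification of \cite{CI1}). A secondary technical point is the reduction, when $Z$ is local, of the cohomological and intersection-theoretic assertions to a projective neighbourhood of $\tmop{supp} D$.
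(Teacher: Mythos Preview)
Your proof is correct and takes a genuinely different route from the paper's. Both arguments share the d\'evissage step reducing to an integral curve (your decomposition matches the paper's Lemma~\ref{chiformulaforE}), but diverge sharply after that. The paper filters $A$ along $C$ by powers of the Jacobson radical $J$, computes $\chi(A/J)$ by pushing $A/J$ to the ramified cover $\tilde{C}\to C$, comparing it there with a maximal (hence trivially Azumaya) order, and invoking Riemann--Hurwitz; it then handles the remaining $\chi(J^i/J^{i+1})$ via a bimodule Euler-characteristic identity (Lemma~\ref{bimodulechiformula}). Your argument bypasses all of this by applying Riemann--Roch directly to the locally free sheaf $A|_D$ and reducing the whole computation to the single identity $2c_1(A) = -r^2\Delta_A$, which you extract from the reduced trace discriminant and the \'etale local form of $A$ at a generic ramification point.

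What each approach buys: yours is shorter and more conceptual, and it makes transparent \emph{why} $K_A$ appears --- the ramification term is literally the first Chern class of $A$ (up to the factor $-r^2/2$). It also avoids the case analysis at nodes of the ramification locus and the bimodule machinery entirely. The paper's filtration argument, by contrast, produces finer intermediate data (the individual values $\chi(A/J)$ and $\chi(J^i/J^{i+1})$) and exhibits $A/J$ explicitly as an order on $\tilde{C}$, which may be of independent use. One small point to tighten in your write-up: the discriminant valuation $r^2(1-1/e_D)$ is most cleanly read off not from $A_D$ over $\OO_{Z,D}$ directly (whose residue field is not algebraically closed) but from the \'etale local form at a general closed point of $D$, exactly as listed in \S\ref{setupforadjform}; your appeal to the local classification in \cite{CI1} is doing precisely that.
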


Note that (\ref{usualadjunctioneq}) appears as a special case of
(\ref{adjunctionformulaeq}) (where $A = \OO_Z$), so we feel justified in
calling (\ref{adjunctionformulaeq}) an adjunction formula. As we have already
seen, our motivation for understanding $\chi ( A \otimes_Z \OO_D )$ is to
study the notion of numerical rationality. In that context, the divisor $D$ is
exceptional with respect to some birational morphism, hence its support is
projective. The rest of this section will be devoted to the proof of Theorem
\ref{orderadjunction}.

\subsection{\label{setupforadjform}Setup}

Let $A$ be a terminal order of rank $r^2$ on a surface $Z$ and $C$ be an
irreducible curve in $Z$. Recall that $Z ( A_C / J ( A_C ) )$ is a product of
field extensions of $k ( C )$ which defines a union of cyclic covers of curves
$\pi_C : \tilde{C} \longrightarrow C$. The degree of $\pi_C$ is of course the
ramification index $e_C$ of $A$ at $C$. Terminal orders can be characterised
using ramification data; an order $A$ is terminal if the ramification divisor
$D = \bigcup D_i$ is a normal crossing divisor on a smooth surface $Z$ and the
cyclic covers $\pi_{D_i}$, $\pi_{D_j}$ ramify only at nodes $p \in D_i \cap
D_j$ with $e_i |e_j$ and $\pi_{D_i}$ totally ramified at $p$.

We first compute $\chi ( A \otimes \OO_C )$ by filtering the sheaf $A$ as
follows. Let $J_C$ be the Jacobson radical of $A \otimes k ( C )$ and $J$ be
its inverse image in $A$. Then $J^e = A ( - C )$ where $e = e_C$ and we have a
filtration $J^e = A ( - C ) \subset J^{e - 1} \subset \cdots \subset J \subset
A$, from which we obtain the exact sequences
\[ 0 \longrightarrow J^{i - 1} / J^i \longrightarrow A / J^i \longrightarrow A
   / J^{i - 1} \longrightarrow 0 \]
for $i = 1, \ldots, e$. Hence
\begin{eqnarray}
  \chi ( A \otimes \OO_C ) & = & \chi ( J^{e - 1} / J^e ) + \chi ( J^{e - 2} /
  J^{e - 1} ) + \cdots + \chi ( J / J^2 ) + \chi ( A / J ) . 
  \label{firstformula}
\end{eqnarray}
We first determine $\chi ( A / J )$, and to do this, we need to know the local
structure of $A / J$.

\subsection{Local structure of $A / J$}

We can use the {\e}tale local structure of $A$ (c.f. {\cite{CI1}}, Definition
2.6) to determine the {\e}tale local structure of $A / J$. Let $r^2$ denote
the rank of $A$ as an $\OO_Z$-module and we assume that $A$ is ramified at
$C$. Since $A$ is terminal, any other ramification curve $D$ intersect $C$
transversely at a finite number of points and the ramification indices satisfy
$e_D |e$ or $e|e_D$.
\begin{enumerate}
  \item First suppose $p \in C$ is a nonsingular point of the ramification
  divisor. Let $u \in \mathfrak{m}_C$ be a uniformising parameter for $\OO_{Z,
  C}$ and we denote $\OO = \OO^{s h}_p$. Then
  \begin{eqnarray*}
    A_p^{s h} & = & M^{r / e \times r / e} \text{ where } M =
    \left(\begin{array}{cccc}
      \OO & \OO & \ldots & \OO\\
      u \OO & \ddots & \ddots & \vdots\\
      \vdots & \ddots & \ddots & \OO\\
      u \OO & \ldots & u \OO & \OO
    \end{array}\right) \subset \OO^{e \times e}\\
    J_p^{s h} & = & M^{r / e \times r / e} \text{ where } N =
    \left(\begin{array}{cccc}
      u \OO & \OO & \ldots & \OO\\
      u \OO & \ddots & \ddots & \vdots\\
      \vdots & \ddots & \ddots & \OO\\
      u \OO & \ldots & u \OO & u \OO
    \end{array}\right) \subset \OO^{e \times e},
  \end{eqnarray*}
  so
  \begin{eqnarray*}
    ( A / J )_p^{s h} & = & \left( ( \OO / u \OO )^{r / e \times r / e}
    \right)^e .
  \end{eqnarray*}
  Moreover $J_p^{s h}$ is generated, as a left (or right) $A^{s h}_p$-module
  by the regular normal element
  \[ \left(\begin{array}{ccccc}
       0 & 1 & 0 & \cdots & 0\\
       0 & 0 & \ddots & \ddots & \vdots\\
       \vdots & \ddots & \ddots & \ddots & 0\\
       0 & \ddots & \ddots & 0 & 1\\
       u & 0 & \cdots & 0 & 0
     \end{array}\right) . \]
  \item Now suppose $p \in C \cap D$ where $D$ is a ramification curve with
  ramification index $e_D$. Let $v \in \mathfrak{m}_D$ be a uniformising
  parameter for $\OO_{Z, D}$. Denote by $S = \OO_{Z, p}^{s h} \left\langle x,
  y \right\rangle / ( x^e - u, y^e - v, x y - \zeta_e y x )$ where $\zeta_e$
  is a primitive $e$-th root of unity. If $e|e_D$, then
  \begin{eqnarray*}
    A_p^{s h} & = & M^{r / e_D \times r / e_D} \text{ where } M =
    \left(\begin{array}{cccc}
      S & S & \ldots & S\\
      y S & \ddots & \ddots & \vdots\\
      \vdots & \ddots & \ddots & S\\
      y S & \ldots & y S & S
    \end{array}\right) \subset S^{e_D / e \times e_D / e}\\
    J_p^{s h} & = & N^{r / e_D \times r / e_D} \text{ where } N =
    \left(\begin{array}{cccc}
      x S & x S & \ldots & x S\\
      x y S & \ddots & \ddots & \vdots\\
      \vdots & \ddots & \ddots & x S\\
      x y S & \ldots & x y S & x S
    \end{array}\right) \subset S^{e_D / e \times e_D / e}
  \end{eqnarray*}
  Let $\overline{S} = S / x S \simeq k \{ v \} [ y ] / ( y^e - v )$, where $k
  \{ v \}$ denotes the strict henselisation of $k [ v ]$ at the origin. Then 
  \begin{eqnarray*}
    ( A / J )_{\mathfrak{p}}^{s h} & = & P^{r / e_D \times r / e_D} \text{
    where } P = \left(\begin{array}{cccc}
      \bar{S} & \bar{S} & \ldots & \bar{S}\\
      y \bar{S} & \ddots & \ddots & \vdots\\
      \vdots & \ddots & \ddots & \bar{S}\\
      y \bar{S} & \ldots & y \bar{S} & \bar{S}
    \end{array}\right) \subset \overline{S}^{e_D / e \times e_D / e} .
  \end{eqnarray*}
  The generator for $J_p^{s h}$ in this case is just $x 1_{A_p^{s h}}$.
  
  \item In the case where $e_D |e$, we denote by $S = \OO_{Z, p}^{s h}
  \left\langle x, y \right\rangle / ( x^{e_D} - u, y^{e_D} - v, x y -
  \zeta_{e_D} y x )$. Then

  \begin{eqnarray*}
    A_p^{s h} & = & M^{r / e \times r / e} \text{ where } M =
    \left(\begin{array}{cccc}
      S & S & \ldots & S\\
      x S & \ddots & \ddots & \vdots\\
      \vdots & \ddots & \ddots & S\\
      x S & \ldots & x S & S
    \end{array}\right) \subset S^{e / e_D \times e / e_D}\\
    J_p^{s h} & = & N^{r / e \times r / e} \text{ where } N =
    \left(\begin{array}{cccc}
      x S & S & \ldots & S\\
      x S & \ddots & \ddots & \vdots\\
      \vdots & \ddots & \ddots & S\\
      x S & \ldots & x S & x S
    \end{array}\right) \subset S^{e / e_D \times e / e_D}
  \end{eqnarray*}
  so
  \begin{eqnarray*}
    ( A / J )_p^{s h} & = & ( ( S / x S )^{e / e_D} )^{r / e \times r / e} .
  \end{eqnarray*}
  Again $J^{s h}_p$ is generated by a regular normal element
  \[ \left(\begin{array}{ccccc}
       0 & 1 & 0 & \cdots & 0\\
       0 & 0 & \ddots & \ddots & \vdots\\
       \vdots & \ddots & \ddots & \ddots & 0\\
       0 & \ddots & \ddots & 0 & 1\\
       x & 0 & \cdots & 0 & 0
     \end{array}\right) . \]
\end{enumerate}
Note that in each case above, the ideal $J$ is generated locally by a regular
normal element.

\begin{lemma}
  Let $\pi : \tilde{C} \longrightarrow C$ be the cover of $C$ determined by
  the ramification data. Then
  \begin{eqnarray*}
    \chi ( \OO_{\tilde{C}} ) & = & e \chi ( \mathcal{O}_C ) - \frac{e}{2}
    \sum_{D \in Z^1 \backslash \{ C \}} \left( 1 - \frac{1}{\min \{ e, e_D \}}
    \right) .
  \end{eqnarray*}
\end{lemma}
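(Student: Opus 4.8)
The plan is to identify $\pi : \tilde{C} \longrightarrow C$ as a finite, tamely ramified cover of degree $e = e_C$ between smooth projective curves and to apply the Hurwitz formula, reading off the ramification of $\pi$ directly from the \'etale local structure of $A / J$ established in cases (1)--(3) above. Since strict henselisation is flat it commutes with the formation of centres, so for a closed point $p \in C$ the fibre $\pi^{- 1} ( p )$ and the ramification indices of $\pi$ over $p$ are computed by $Z ( ( A / J )^{s h}_p )$.

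First I would record the global shape of $\pi$. By definition $\tilde{C}$ is the smooth projective curve, in general a disjoint union $\bigsqcup_j \tilde{C}_j$, whose function field is the product of (automatically separable, since we are in characteristic zero) extensions $Z ( A_C / J ( A_C ) )$ of $k ( C )$; hence $\sum_j \deg \pi_j = e$ for $\pi_j = \pi |_{\tilde{C}_j}$, and all ramification of $\pi$ is tame. At the generic point of $C$ the centre $Z ( A \otimes k ( C ) / J_C )$ is a product of fields, so $\pi$ is unramified there; and if $p \in C$ is a smooth point of the ramification divisor, case (1) gives $Z ( ( A / J )^{s h}_p ) = ( \OO^{s h}_{C, p} )^{\oplus e}$, so $\pi$ is \'etale over $p$. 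Thus $\pi$ is unramified outside the finitely many nodes $p \in C \cap D$, with $D$ running over ramification curves $\ne C$.

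Next I would compute the different $\mathfrak{R}$ of $\pi$ at such a node $p \in C \cap D$ from cases (2) and (3). If $e \,|\, e_D$, case (2) gives $Z ( ( A / J )^{s h}_p ) = \overline{S} \cong k \{ v \} [ y ] / ( y^e - v )$, a totally ramified degree-$e$ extension of $\OO^{s h}_{C, p} = k \{ v \}$; so $\pi^{- 1} ( p )$ is a single point of ramification index $e$, contributing $e - 1 = e ( 1 - 1 / \min \{ e, e_D \} )$ to $\deg \mathfrak{R}$. If $e_D \,|\, e$, case (3) gives $Z ( ( A / J )^{s h}_p ) = ( k \{ v \} [ y ] / ( y^{e_D} - v ) )^{\oplus e / e_D}$, that is $e / e_D$ points each of ramification index $e_D$, contributing $( e / e_D ) ( e_D - 1 ) = e ( 1 - 1 / \min \{ e, e_D \} )$. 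For $D$ not meeting $C$, or $D$ unramified, there is no contribution, consistent with the vanishing of $1 - 1 / \min \{ e, e_D \}$ when $e_D = 1$. Summing over nodes gives $\deg \mathfrak{R} = \sum_{D \in Z^1 \setminus \{ C \}} ( C \cdot D ) \, e \, ( 1 - 1 / \min \{ e, e_D \} )$, which equals $e \sum_D ( 1 - 1 / \min \{ e, e_D \} )$ since distinct components of a normal crossing divisor meet at a single point.

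Finally, applying the Hurwitz formula $2 g ( \tilde{C}_j ) - 2 = \deg \pi_j \cdot ( 2 g ( C ) - 2 ) + \deg \mathfrak{R}_j$ on each connected component, summing, and using $\chi ( \OO_{\tilde{C}} ) = \sum_j ( 1 - g ( \tilde{C}_j ) )$ together with $\chi ( \OO_C ) = 1 - g ( C )$ gives $\chi ( \OO_{\tilde{C}} ) = e \, \chi ( \OO_C ) - \tfrac{1}{2} \deg \mathfrak{R}$, which is the asserted formula. The main obstacle is the middle step: one must justify that the ramification of $\pi$ over a node $p$ is legitimately read off from the strict henselian local algebra $( A / J )^{s h}_p$ (this is where flatness of henselisation and the structure results of the previous subsection enter), and then verify that the two divisibility cases $e \,|\, e_D$ and $e_D \,|\, e$ genuinely produce the same uniform contribution $e ( 1 - 1 / \min \{ e, e_D \} )$ to the different; everything else is the classical tame Hurwitz computation for covers of curves.
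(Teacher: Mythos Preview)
Your approach is essentially the same as the paper's: both apply the Riemann--Hurwitz formula to the degree-$e$ cover $\pi$ and compute the local ramification indices over each node $p \in C \cap D$ case by case, obtaining a contribution of $e(1 - 1/\min\{e,e_D\})$ in either divisibility case. One small caution: your claim that ``distinct components of a normal crossing divisor meet at a single point'' is not true in general (normal crossing constrains the local picture at each intersection, not the global number of intersections), so the sum should really carry a factor $C \cdot D$; the paper's own statement of the lemma omits this factor as well, but its subsequent use in the proof of the next proposition reinstates it.
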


\begin{proof}
  The cover $\pi$ has degree $e = e_C$, so by the Riemann-Hurwitz formula, we
  have
  \begin{eqnarray*}
    \chi ( \OO_{\tilde{C}} ) & = & e \chi ( \OO_C ) - \frac{1}{2} \sum_{p \in
    \tilde{C}} ( e_p - 1 ) .
  \end{eqnarray*}
  If $\pi ( p )$ is a nonsingular point of the ramification divisor, then $e_p
  = 1$. Now suppose $\pi ( p ) \in C \cap D$ where $A$ is ramified on $D$ with
  ramification index $e_D$. If $e_D \geqslant e$, then $\pi$ is totally
  ramified at $\pi ( p )$, hence $e_p = e$. If $1 < e_D < e$, then in the
  fibre $\pi^{- 1} ( \pi ( p ) )$ there are $e / e_D$ points each with
  ramification index $e_D - 1$. A simple calculation then yields the above
  formula.
\end{proof}

\begin{lemma}
  The sheaf $A / J$ considered as a sheaf on $C$ is a $\pi_{\ast}
  \OO_{\tilde{C}}$-module.
\end{lemma}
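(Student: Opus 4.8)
The plan is to realise the $\pi_{\ast}\OO_{\tilde{C}}$-module structure as the structure of $A/J$ over its own center, and then to identify that center with $\pi_{\ast}\OO_{\tilde{C}}$. Write $\mathcal{Z}$ for the center of the sheaf of $\OO_C$-algebras $A/J$; it is a coherent sheaf of commutative $\OO_C$-algebras, finite over $\OO_C$. Since any algebra is a module over its center, it suffices to produce a morphism of $\OO_C$-algebras $\pi_{\ast}\OO_{\tilde{C}}\to\mathcal{Z}$, and I will in fact show $\mathcal{Z}=\pi_{\ast}\OO_{\tilde{C}}$. If $A$ is unramified along $C$ the statement is trivial ($e_C=1$, so $J=A(-C)$, $A/J=A\otimes\OO_C$ and $\pi=\mathrm{id}$), so I assume $A$ is ramified along $C$; as $A$ is terminal, $C$ is then a smooth component of the normal crossing ramification divisor.

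First I would pin down the generic stalk. Localisation at the generic point $\eta$ of $C$ is flat, hence commutes with forming the center (the center is the kernel of the $\OO_C$-linear adjoint-action map $A/J\to\tmop{End}_{\OO_C}(A/J)$, and $A/J$ is coherent), so $\mathcal{Z}_{\eta}=Z\bigl((A/J)_{\eta}\bigr)=Z(A_C/J(A_C))=\prod_i L_i$, which is precisely the {\e}tale $k(C)$-algebra defining the cover $\pi\colon\tilde{C}\to C$. Thus $\tilde{C}$ is the normalisation of $C$ in $\prod_i L_i$ and $\pi_{\ast}\OO_{\tilde{C}}=\overline{\OO_C}$, the integral closure of $\OO_C$ in $\prod_i L_i$. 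From the three {\e}tale-local descriptions of $A/J$ obtained in the previous subsection one reads off that $A/J$ is {\e}tale-locally free of rank $r^2/e_C$ over $\OO_C$; in particular $A/J$, hence also $\mathcal{Z}$, is torsion-free over $\OO_C$, so $\mathcal{Z}$ embeds into $\prod_i L_i$. Being finite over $\OO_C$, $\mathcal{Z}$ is integral over $\OO_C$, which already gives the inclusion $\mathcal{Z}\subseteq\overline{\OO_C}$.

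For the reverse inclusion it is enough to show $\mathcal{Z}$ is normal: a normal, module-finite $\OO_C$-subalgebra of $\prod_i L_i$ whose total ring of fractions is $\prod_i L_i$ is integrally closed there and therefore contains $\overline{\OO_C}$. Normality can be checked after strict henselisation at each closed point $p\in C$, and strict henselisation is flat, so it too commutes with forming the center; thus I read $\mathcal{Z}^{\mathrm{sh}}_p=Z\bigl((A/J)^{\mathrm{sh}}_p\bigr)$ off the local models of the previous subsection. In the first model $Z\bigl((M_{r/e}(\OO/u\OO))^{e}\bigr)=(\OO/u\OO)^{e}$, a product of $e$ copies of the regular one-dimensional ring $\OO^{\mathrm{sh}}_{C,p}$; in the second $Z\bigl(P^{r/e_D\times r/e_D}\bigr)=Z(P)=\bar{S}\cdot 1$, where $\bar{S}=k\{v\}[y]/(y^{e}-v)$ is a strictly henselian discrete valuation ring (an element of $Z(P)$ must commute with the diagonal idempotents and the superdiagonal elementary matrices of $P$, forcing it to be a scalar); in the third $Z\bigl(((S/xS)^{e/e_D})^{r/e\times r/e}\bigr)=(S/xS)^{e/e_D}$, a product of the strictly henselian discrete valuation rings $S/xS=k\{v\}[y]/(y^{e_D}-v)$. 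In every case $\mathcal{Z}^{\mathrm{sh}}_p$ is a finite product of regular one-dimensional local rings --- concretely it is $\prod_{\tilde{p}\mapsto p}\OO^{\mathrm{sh}}_{\tilde{C},\tilde{p}}$, matching the splitting of the fibres of $\pi$ recorded in the Riemann--Hurwitz computation above --- and in particular normal. Since normality descends along the faithfully flat extensions $\OO_{C,p}\to\OO^{\mathrm{sh}}_{C,p}$, the sheaf $\mathcal{Z}$ is normal, so $\mathcal{Z}=\overline{\OO_C}=\pi_{\ast}\OO_{\tilde{C}}$, and $A/J$ is a module over $\pi_{\ast}\OO_{\tilde{C}}$.

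The step that genuinely needs care --- the one I expect to be the main obstacle --- is the bookkeeping that lets the strictly henselian computations control the global sheaf: that forming the center of $A/J$ commutes with the flat base changes $\OO_{C,p}\to\OO^{\mathrm{sh}}_{C,p}$ and with localisation at $\eta$ (so that $\mathcal{Z}$ is determined by the local pictures and by $\prod_i L_i$), and that normality descends along those faithfully flat extensions. Once this is in place, the identification of the centers of the three local models, and their matching with the local rings of $\tilde{C}$, is a direct calculation from the tables already displayed in the previous subsection.
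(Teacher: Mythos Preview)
Your argument is correct, and it actually proves more than the lemma asks: you identify the center of $A/J$ with $\pi_{\ast}\OO_{\tilde{C}}$, whereas the paper only shows that $A/J$ is stable under the $\pi_{\ast}\OO_{\tilde{C}}$-action. The paper's proof is shorter and more direct: it observes that generically $A/J\otimes k(C)\simeq M_{r/e}(k(\tilde{C}))$, so $k(\tilde{C})$ (hence $\OO_{\tilde{C}}$) acts centrally there, and then simply checks, from the same three {\'e}tale local pictures you used, that at each closed point $(A/J)^{sh}_{\mathfrak{p}}\cdot\OO^{sh}_{\tilde{C},\mathfrak{p}}\subseteq(A/J)^{sh}_{\mathfrak{p}}$; intersecting back inside $A/J\otimes k(C)$ finishes. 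Your route via normality and integral closure is heavier machinery (descent of normality, compatibility of center with flat base change), but the extra information you extract---that $\mathcal{Z}=\pi_{\ast}\OO_{\tilde{C}}$ exactly---is precisely what the paper needs in the \emph{next} proposition, where it asserts that $A/J$ is an order on $\tilde{C}$. So your approach front-loads work that the paper defers, and makes the subsequent step immediate.
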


\begin{proof}
  Firstly, $A / J \otimes k ( C )$ is isomorphic to $M_n ( k ( \tilde{C} ) )$.
  So it suffices to show that $( A / J )_{\mathfrak{p}}
  \mathcal{O}_{\tilde{C}, \mathfrak{p}} \subseteq ( A / J )_{\mathfrak{p}}$
  for all prime ideals $\mathfrak{\mathfrak{p}} \in \tmop{Spec} C$ where we
  identify everything with their natural images in $A / J \otimes k ( C )$.
  From the {\e}tale local structures for $A / J$ above, we can see that $( A /
  J )_{\mathfrak{p}}^{s h} \mathcal{O}_{\tilde{C}, \mathfrak{p}}^{s h}
  \subseteq ( A / J )_{\mathfrak{p}}^{s h}$ for all $\mathfrak{p} \in
  \tmop{Spec} C$. Intersecting with $A / J \otimes k ( C )$ gives the desired
  result.
\end{proof}

\begin{proposition}
  {\tmdummy}
  
  \begin{eqnarray*}
    \chi ( A / J ) & = & \frac{r^2}{2 e} \left( 2 \chi ( \OO_C ) - C \cdot
    \Delta_A + \left( 1 - \frac{1}{e} \right) C^2 \right)
  \end{eqnarray*}
\end{proposition}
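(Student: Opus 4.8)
The plan is to realise $A/J$ as the pushforward along $\pi_C : \tilde C \to C$ of a vector bundle of rank $(r/e)^2$ on $\tilde C$ and then to compute with Riemann--Roch on $\tilde C$, using the two preceding lemmas. Since $\pi_C$ is finite, the previous lemma supplies a unique coherent $\OO_{\tilde C}$-module $\mathcal G$ with $(\pi_C)_{\ast}\mathcal G = A/J$, and $\chi(A/J) = \chi(\mathcal G)$ because $\pi_C$ has no higher direct images. Reading $\mathcal G$ off from the three {\e}tale-local forms of $A/J$ above, one finds: over a nonsingular point of the ramification divisor, $\mathcal G^{sh} = M_{r/e}(\OO^{sh}_{\tilde C})$ at each of the $e$ points upstairs; over a node $p \in C\cap D$ with $e \mid e_D$, $\mathcal G^{sh}_{\tilde p} = M_{r/e_D}(H)$ at the unique point $\tilde p$ above $p$, where $H \subset M_{e_D/e}(\OO^{sh}_{\tilde C,\tilde p})$ is the standard hereditary order of size $e_D/e$; and over a node with $e_D \mid e$, $\mathcal G^{sh} = M_{r/e}(\OO^{sh}_{\tilde C})$ at each of the $e/e_D$ points above $p$. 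In every case $\mathcal G$ is locally free of rank $(r/e)^2$; since $C$ is smooth (a component of the normal crossing ramification divisor) and projective, and $\pi_C$ is a cyclic cover, $\tilde C$ is a smooth projective (possibly disconnected) curve and $\mathcal G$ a vector bundle on it of rank $(r/e)^2$.

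Next I would compute $\chi(\mathcal G)$. Embed $\mathcal G$ in a sheaf $\mathcal M$ of maximal orders in the split algebra $A/J \otimes k(C) \cong M_{r/e}(k(\tilde C))$, modifying $\mathcal G$ only at the finitely many points where $\mathcal G_{\tilde p}$ is not maximal; there one replaces $M_{r/e_D}(H)$ by the matrix ring $M_{r/e}(\OO^{sh}_{\tilde C,\tilde p})$ containing it. Because the generic algebra is split and $\tilde C$ is a smooth curve, such an $\mathcal M$ has the form $\mathcal{E}nd_{\OO_{\tilde C}}(\mathcal V)$ for a rank-$(r/e)$ vector bundle $\mathcal V$, so $\det \mathcal M$ is trivial and $\chi(\mathcal M) = (r/e)^2\chi(\OO_{\tilde C})$. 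From $0 \to \mathcal G \to \mathcal M \to \mathcal M/\mathcal G \to 0$ we then get
\begin{equation*}
  \chi(A/J) \;=\; \chi(\mathcal G) \;=\; (r/e)^2\,\chi(\OO_{\tilde C}) \;-\; \operatorname{length}(\mathcal M/\mathcal G),
\end{equation*}
and $\mathcal M/\mathcal G$ is supported at the nodes $p \in C \cap D$ with $e_D > e$, where its length is $(r/e_D)^2$ times the colength of the size-$(e_D/e)$ hereditary order inside the full matrix ring, i.e. $(r/e_D)^2\binom{e_D/e}{2} = \tfrac{r^2}{2e}\bigl(\tfrac1e - \tfrac1{e_D}\bigr)$.

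It remains to substitute the Riemann--Hurwitz formula of the previous lemma for $\chi(\OO_{\tilde C})$ and to collect the contributions node by node. A node $p \in C \cap D$ contributes, beyond the main term $(r/e)^2 \cdot e\,\chi(\OO_C)$, the amount $-\tfrac{r^2}{2e}\bigl(1 - \tfrac1{\min\{e,e_D\}}\bigr)$ from the ramification term of $\chi(\OO_{\tilde C})$ and, when $e_D > e$, the further amount $-\tfrac{r^2}{2e}\bigl(\tfrac1e - \tfrac1{e_D}\bigr)$ from $\operatorname{length}(\mathcal M/\mathcal G)$; in both cases ($e_D \le e$ and $e_D > e$) these add up to $-\tfrac{r^2}{2e}\bigl(1 - \tfrac1{e_D}\bigr)$. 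Summing over all nodes yields $\chi(A/J) = \tfrac{r^2}{2e}\bigl(2\chi(\OO_C) - \sum_{D \ne C}(1 - \tfrac1{e_D})(C\cdot D)\bigr)$, and since $A$ is ramified at $C$ with index $e$ one has $C\cdot\Delta_A = (1 - \tfrac1e)C^2 + \sum_{D\ne C}(1 - \tfrac1{e_D})(C\cdot D)$, so the bracket equals $2\chi(\OO_C) - C\cdot\Delta_A + (1 - \tfrac1e)C^2$, which is the claimed identity.

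The delicate step is the local analysis at the nodes: one must extract $\mathcal G$ correctly from the presentations of $A/J$, verify it is locally free of rank $(r/e)^2$, identify the colength $(r/e_D)^2\binom{e_D/e}{2}$ of the hereditary order inside the maximal one, and justify the existence of the global maximal order $\mathcal M$ with trivial determinant --- the last point relying on the splitness of $M_{r/e}(k(\tilde C))$. Granting these, the proof is just Riemann--Roch on $\tilde C$, the previous two lemmas, and elementary arithmetic.
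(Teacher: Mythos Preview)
Your proof is correct and follows essentially the same route as the paper: view $A/J$ as a sheaf on $\tilde C$ via the preceding lemma, embed it in a maximal order (which is $\mathcal{E}nd$ of a vector bundle since the Brauer group of a curve vanishes, hence has degree zero), compute the colength of the inclusion from the local description at nodes with $e\mid e_D$, and then combine with the Riemann--Hurwitz formula for $\chi(\OO_{\tilde C})$ to collapse the node-by-node contributions to $-\tfrac{r^2}{2e}(1-\tfrac1{e_D})$ per intersection point. The only cosmetic difference is that the paper phrases everything as pushforwards to $C$ (writing the exact sequence as $0\to A/J\to\pi_\ast\Omega\to Q\to 0$) rather than pulling back to $\tilde C$, and it packages the two cases $e_D\le e$ and $e_D>e$ into a single formula using $\min$ and $\max$.
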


\begin{proof}
  The previous lemma shows that we can consider $A / J$ as a sheaf on
  $\tilde{C}$. In fact, we can see from the local structure of $A / J$ that it
  is an order on $\tilde{C}$ in the semi-simple algebra $M_{r / e} ( k (
  \tilde{C} ) )$ (semi-simple since $k ( \tilde{C} )$ is a product of fields).
  Hence we can embed $A / J$ in a maximal order $\Omega$. This gives an exact
  sequence of $\OO_C$-modules
  \[ 0 \longrightarrow A / J \longrightarrow \pi_{\ast} \Omega \longrightarrow
     Q \longrightarrow 0 \]
  where $Q$ is a torsion sheaf supported on points where $A / J$ is not a
  maximal order on the corresponding fibre. Since the Brauer group of a curve
  is trivial, $\Omega$ is a maximal order in a matrix algebra, hence is
  trivial Azumaya. This gives $\chi ( \Omega ) = ( r / e )^2 \chi (
  \OO_{\tilde{C}} )$ which is equal to $\chi ( \pi_{\ast} \Omega )$ since
  $\pi$ is a finite morphism. The sheaf $Q$ is supported on points, so $\chi (
  Q )$ is the sum of the lengths of $Q_p$ over $p \in C$. Referring again to
  the local structure of $A / J$, we see that $A / J$ is nonmaximal at $p$ if
  and only if $p$ is a point of intersection of $C$ and a ramification curve
  $D$ where $e|e_D$. A simple computation shows that
  \begin{eqnarray*}
    Q_p & = & P^{r / e_D \times r / e_D} \text{ where } P =
    \left(\begin{array}{cccc}
      0 & 0 & \cdots & 0\\
      k & \ddots & \ddots & \vdots\\
      \vdots & \ddots & \ddots & \vdots\\
      k & \cdots & k & 0
    \end{array}\right) \subset k^{e_D / e \times e_D / e}
  \end{eqnarray*}
  hence
  \begin{eqnarray*}
    \dim ( Q_p ) & = & \frac{r^2}{e_D^2} \sum_{j = 1}^{e_D / e - 1} j =
    \frac{r^2}{2 e} \left( \frac{1}{e} - \frac{1}{e_D} \right) .
  \end{eqnarray*}
  So
  \begin{eqnarray*}
    \chi ( Q ) & = & \sum_{D \in Z^1 \backslash \{ C \}} \frac{r^2}{2 e}
    \left( \frac{1}{e} - \frac{1}{\max \{ e, e_D \}} \right) C \cdot D.
  \end{eqnarray*}
  Combining the expressions for $\chi ( \Omega )$ and $\chi ( Q )$, we obtain
  \begin{eqnarray*}
    \chi ( A / J ) & = & \frac{r^2}{e} \left( \chi ( \mathcal{O}_C ) -
    \frac{1}{2} \sum_{D \in Z^1 \backslash \{ C \}} \left( 1 - \frac{1}{\min
    \{ e, e_D \}} + \frac{1}{e} - \frac{1}{\max \{ e, e_D \}} \right) C \cdot
    D \right)\\
    & = & \frac{r^2}{e} \left( \chi ( \mathcal{O}_C ) - \frac{1}{2} \sum_{D
    \in Z^1 \backslash \{ C \}} \left( 1 - \frac{1}{e_D} \right) C \cdot D
    \right)
  \end{eqnarray*}
  which proves the proposition.
\end{proof}

To finish the computation of $\chi ( A \otimes_Z \OO_C )$ we need the values
of $\chi ( J^i / J^{i + 1} )$. Let $B$ be an $\mathcal{O}_C$-algebra. We say
that $L$ is an invertible $( B - B )$-bimodule if there exists a $( B - B
)$-bimodule $L'$ such that $L \otimes_B L' \simeq B$ as $( B - B )$-bimodules.

\begin{lemma}
  \label{bimodulechiformula}Let $C$ be a projective curve, $B$ be an
  $\OO_C$-algebra which is torsion-free as an $\OO_C$-module, and $L, L'$ be
  invertible $( B - B )$-bimodules. Then
  \begin{eqnarray*}
    \chi ( L \otimes_B L' ) & = & \chi ( L ) + \chi ( L' ) - \chi ( B ) .
  \end{eqnarray*}
\end{lemma}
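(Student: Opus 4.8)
The plan is to deduce the formula from the additivity of $\chi$ on short exact sequences of coherent $\OO_C$-modules, after isolating one key point: tensoring an $\OO_C$-torsion sheaf with an invertible bimodule does not change its length. We may assume $C$ is integral, since $\chi$, $-\otimes_B L'$, and the hypotheses are all compatible with decomposing $C$ into its irreducible components. First I would record the standard properties of invertible bimodules: $L$ and $L'$ are finitely generated and projective both as left and as right $B$-modules, hence $\OO_C$-torsion-free (being summands of a free $B$-module), $L'$ is flat as a left $B$-module so that $F:=-\otimes_B L'$ is exact on right $B$-modules, and $L\otimes_B L'$ is again an invertible bimodule. At the generic point, $L\otimes_{\OO_C}k(C)$ is an invertible bimodule over the finite-dimensional $k(C)$-algebra $\bar B:=B\otimes_{\OO_C}k(C)$, hence isomorphic to $\bar B$ as a right $\bar B$-module (a standard fact: over a base field an invertible bimodule is the regular bimodule twisted by an algebra automorphism, and such a twist is again free of rank one on each side). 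Fixing such an isomorphism, we may regard $L$ and $B$ as coherent right-$B$-submodules of the constant sheaf $\bar B$, each equal to $\bar B$ at the generic point.

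Next I would set $M_0:=L\cap B$ inside $\bar B$, a coherent right $B$-module fitting into $0\to M_0\to L\to T_L\to 0$ and $0\to M_0\to B\to T_B\to 0$ with $T_L,T_B$ being $\OO_C$-torsion (since $L$ and $B$ agree generically). Applying the exact functor $F$ to these sequences, using $F(B)=B\otimes_B L'\cong L'$ and $F(L)=L\otimes_B L'$, and taking $\chi$ throughout, one obtains
\[ \chi(L\otimes_B L')-\chi(L)=\bigl[\chi(FM_0)-\chi(M_0)\bigr]+\bigl[\chi(FT_L)-\chi(T_L)\bigr], \]
\[ \chi(L')-\chi(B)=\bigl[\chi(FM_0)-\chi(M_0)\bigr]+\bigl[\chi(FT_B)-\chi(T_B)\bigr]. \]
Thus the lemma reduces to showing $\chi(FT)=\chi(T)$ for every $\OO_C$-torsion right $B$-module $T$: granting this, both right-hand sides equal $\chi(FM_0)-\chi(M_0)$, so $\chi(L\otimes_B L')-\chi(L)=\chi(L')-\chi(B)$, which is the assertion.

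For the torsion claim I would argue pointwise. Write $T=\bigoplus_p T_p$ over the finitely many closed points of its support, so $(FT)_p\cong T_p\otimes_{B_p}L'_p$ with each $T_p$ of finite $\OO_{C,p}$-length. Because $\OO_{C,p}$ is local, the invertible $(B_p,B_p)$-bimodule $L'_p$ is isomorphic to $B_p$ with its right action twisted by an $\OO_{C,p}$-algebra automorphism $\sigma_p$; consequently $T_p\otimes_{B_p}L'_p$ is just $T_p$ with its right action precomposed with $\sigma_p$, and since $\sigma_p$ is $\OO_{C,p}$-linear this does not change the $\OO_{C,p}$-length. Summing over $p$ yields $\chi(FT)=\chi(T)$.

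The substantive input is that invertible bimodules are ``trivial up to an algebra automorphism'': free of rank one on each side over the base-field algebra $\bar B$ (used to intersect the lattices $L$ and $B$), and a twist of the regular bimodule at each local ring $\OO_{C,p}$ (used for the torsion claim). Both are standard in the Picard theory of orders --- they ultimately reflect the vanishing of the Picard group of the relevant commutative base ring --- and I expect citing these cleanly to be the only delicate point; alternatively, for the orders $A/J$ and $\Omega$ that actually occur, one can read them off the explicit local presentations in Section \ref{setupforadjform}. Everything else is bookkeeping with the exactness of $F$ and the additivity of $\chi$.
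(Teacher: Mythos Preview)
Your approach is correct and reaches the same crux as the paper --- the claim that $\chi(T\otimes_B L')=\chi(T)$ whenever $T$ is $\OO_C$-torsion --- but sets up the reduction differently. The paper manufactures a \emph{single} exact sequence $0\to B\to L\to Q\to 0$ from a global section of $L$ (first twisting by an ample $\OO_C(n)$ so that $L$ is globally generated, then untwisting via a rank count at the end), applies the exact functor $-\otimes_B L'$, and reads off the identity directly. You instead intersect $L$ and $B$ inside the generic fibre $\bar B$ and run two exact sequences through $M_0=L\cap B$; this trades the ample-twist trick for the statement that an invertible bimodule over the finite-dimensional $k(C)$-algebra $\bar B$ is free of rank one on each side. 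Both reductions are short; the paper's single-sequence version is marginally more economical, while yours avoids any projectivity hypothesis beyond what is stated.

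One caveat on the torsion step. Your assertion that $L'_p\cong(B_p)_{\sigma_p}$ ``because $\OO_{C,p}$ is local'' tacitly assumes $B_p$ is semiperfect, but locality of the base alone does not force idempotents of $B_p/\mathfrak m_pB_p$ to lift when $\OO_{C,p}$ is not henselian. The remedy is exactly what you already gesture at: either pass to the completion $\widehat\OO_{C,p}$, over which $\widehat B_p$ is genuinely semiperfect, or --- more simply --- filter $T_p$ by powers of $\mathfrak m_p$ and invoke the twist description of invertible bimodules over the finite-dimensional $k$-algebra $B_p/\mathfrak m_pB_p$, where it is unproblematic. With that small adjustment your argument is complete; on this very point the paper itself just asserts $\chi(Q\otimes_B L')=\chi(Q)$ after noting that $L'$ and $B$ have the same $\OO_C$-rank, so your treatment is in fact more explicit than the original.
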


\begin{proof}
  First we assume that $L$ is generated as an $\OO_C$-module by its sections.
  Suppose $p \in C$ is a closed point and $s_p \in L_p$ is a regular element,
  that is $B_p \longrightarrow L_p$ given by $b \longmapsto s_p b$ is
  injective. Then since $L$ is generated by sections, we can lift this to a
  section $s \in H^0 ( C, L )$. Now $B$ is torsion-free as an
  $\mathcal{O}_C$-module, so the map $B \longrightarrow L$ given by $s$ is
  also injective. This gives an exact sequence $0 \longrightarrow B
  \longrightarrow L \longrightarrow Q \longrightarrow 0$ of right $B$-modules.
  Since $- \otimes_B L'$ induces an equivalence of categories, it is exact, so
  we have the exact sequence
  \[ 0 \longrightarrow L' \longrightarrow L \otimes_B L' \longrightarrow Q
     \otimes_B L' \longrightarrow 0. \]
  Note that since $L'$ is invertible, its rank as an $\OO_C$-module is the
  same as the $\mathcal{O}_C$-rank of $B$. Thus the sheaf $Q$ is supported on
  points, and $\chi ( Q \otimes_B L' ) = \chi ( Q )$. This gives
  \begin{eqnarray*}
    \chi ( L \otimes_B L' ) & = & \chi ( L' ) + \chi ( Q ) = \chi ( L' ) +
    \chi ( L ) - \chi ( B ) .
  \end{eqnarray*}
  In general, let $\OO_C ( 1 )$ be a very ample line bundle on $C$ so that
  $\OO_C ( n ) \otimes_C L$ is generated by sections for some $n \gg 0$. We
  denote by $r$ the rank of $L$ as an $\OO_C$-module, and note that $L
  \otimes_B L'$ has the same rank. Then
  \begin{eqnarray*}
    \chi ( \OO_C ( n ) \otimes_C L \otimes_B L' ) & = & \chi ( \OO_C ( n )
    \otimes_C L ) + \chi ( L' ) - \chi ( B )\\
    & = & r n + \chi ( L ) + \chi ( L' ) - \chi ( B ) .
  \end{eqnarray*}
  But $\chi ( \OO_C ( n ) \otimes_C L \otimes_B L' ) = r n + \chi ( L
  \otimes_B L' )$ so we are done. 
\end{proof}

Recall that $J$ is generated locally by a regular normal element, hence so is
$J^i / J^{i + 1}$. The following lemma follows from the local structure of $A
/ J$.

\begin{lemma}
  \label{Jsheaves}The sheaves $J^i / J^{i + 1}$ are invertible $( A / J - A /
  J )$-bimodules, locally generated by a regular normal element. Moreover $J^i
  / J^{i + 1} \otimes_{A / J} J^k / J^{k + 1} \simeq J^{i + k} / J^{i + k +
  1}$ as $( A / J - A / J )$-bimodules.
\end{lemma}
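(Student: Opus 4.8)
The plan is to reduce everything to the {\e}tale-local descriptions of $A$, $J$ and $A/J$ recorded above. The crucial point — noted at the close of the previous subsection — is that in all three cases $J$ is generated {\e}tale-locally at a point $p\in C$ by a single regular normal element $\theta$ (the displayed permutation-type matrix in cases (1) and (3), the element $x\cdot 1$ in case (2)), so that $J_p^{sh}=\theta A_p^{sh}=A_p^{sh}\theta$ with $\theta$ a non-zero-divisor. Normality gives $A\theta A=\theta A$ locally, and an immediate induction then shows that $J^n$ is generated {\e}tale-locally by the regular normal element $\theta^n$, with $\theta^{n+1}A=\theta^nJ=J\theta^n$ and $J^{n+1}=J\cdot J^n=J^n\cdot J$ locally. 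As these are equalities of subsheaves of $A$ they hold globally, so the left and right $A$-actions on $J^n/J^{n+1}$ both factor through $A/J$; thus $J^n/J^{n+1}$ is an $(A/J-A/J)$-bimodule, and the image of $\theta^n$ is a regular normal generator (regularity being exactly the statement that $\theta^n$ is a non-zero-divisor, via $\theta^{n+1}A=\theta^nJ=J\theta^n$).

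For the multiplicativity statement I would use the multiplication maps $J^i\otimes J^k\to J^{i+k}$. These are $A$-balanced, and since $J^{i+1}J^k\subseteq J^{i+k+1}$, $J^iJ^{k+1}\subseteq J^{i+k+1}$, and $A/J$ is a quotient \emph{ring} of $A$, multiplication descends to a well-defined map of $(A/J-A/J)$-bimodules $J^i/J^{i+1}\otimes_{A/J}J^k/J^{k+1}\to J^{i+k}/J^{i+k+1}$. It is surjective since $J^iJ^k=J^{i+k}$ holds {\e}tale-locally ($\theta^iA\,\theta^kA=\theta^{i+k}A$), and injectivity is again an {\e}tale-local matter: in the free local models it reduces to the isomorphism $\theta^iA/\theta^{i+1}A\otimes_{A/J}\theta^kA/\theta^{k+1}A\simeq\theta^{i+k}A/\theta^{i+k+1}A$, which is checked directly using that each factor is free of rank one as a one-sided $A/J$-module (via left, resp. right, multiplication by the relevant power of $\theta$, which carries $J$ onto the next power of $J$). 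This yields $J^i/J^{i+1}\otimes_{A/J}J^k/J^{k+1}\simeq J^{i+k}/J^{i+k+1}$.

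Invertibility I would then deduce from the multiplicativity formula together with the identity $J^e=A(-C)$ from the Setup (with $e=e_C$): since $J^{e+1}=J\cdot J^e=J(-C)$, one gets $J^e/J^{e+1}\simeq(A/J)\otimes_{\OO_Z}\OO_Z(-C)$, which is $A/J$ twisted by an invertible $\OO_C$-module and hence a manifestly invertible $(A/J-A/J)$-bimodule. The multiplicativity formula then exhibits, for $0\leq i\leq e$, a two-sided inverse $\bigl(J^{e-i}/J^{e-i+1}\bigr)\otimes_{\OO_Z}\OO_Z(C)$ of $J^i/J^{i+1}$, and the remaining exponents reduce to this range via the periodicity $J^{i+e}=J^e\cdot J^i=J^i(-C)$, which only twists by a line bundle. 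The only real work here is bookkeeping — tracking which products of the fractional ideals $J^n$ coincide (everything flows from $A\theta A=\theta A$ locally and from $J^e=A(-C)$) and carrying out the single {\e}tale-local isomorphism check; I do not expect any conceptual obstacle, which is presumably why the text says the lemma ``follows from the local structure of $A/J$''.
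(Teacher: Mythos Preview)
Your proposal is correct and is exactly the approach the paper has in mind: the paper's entire proof is the single remark that $J$ is locally generated by a regular normal element and that the lemma ``follows from the local structure of $A/J$'', and you have simply supplied the routine bookkeeping (the induction on powers of $\theta$, the descent of the multiplication map, and the invertibility via $J^e=A(-C)$) that the paper leaves implicit.
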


\begin{corollary}
  \label{chiformulaforArestrictedtoC} {\tmdummy}
  
  \begin{eqnarray*}
    \chi ( A \otimes_Z \OO_C ) & = & \frac{r^2}{2} \left( 2 \chi ( \OO_C ) - C
    \cdot \Delta_A \right)
  \end{eqnarray*}
\end{corollary}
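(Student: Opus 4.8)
The plan is to evaluate the filtration formula~(\ref{firstformula}) term by term. Write $e = e_C$, so that (\ref{firstformula}) reads $\chi(A\otimes_Z\OO_C) = \sum_{i=0}^{e-1}\chi(J^i/J^{i+1})$, and it suffices to understand the function $i\mapsto\chi(J^i/J^{i+1})$. By Lemma~\ref{Jsheaves} each $J^i/J^{i+1}$ is an invertible $(A/J - A/J)$-bimodule and $J^i/J^{i+1}\otimes_{A/J}J^k/J^{k+1}\simeq J^{i+k}/J^{i+k+1}$, so Lemma~\ref{bimodulechiformula} applied with $B = A/J$ gives $\chi(J^i/J^{i+1}) = \chi(A/J) + i\,\delta$, where $\delta := \chi(J/J^2) - \chi(A/J)$. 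Thus the summands form an arithmetic progression and everything is reduced to computing the single number $\delta$.

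To pin down $\delta$ I would evaluate the progression at $i = e$. Since $J^e = A(-C) = A\otimes_{\OO_Z}\OO_Z(-C)$ and $J^{e+1} = J^e J = J(-C)$ (the local equation of $C$ being central), and $A/J$ is supported on $C$, one has $J^e/J^{e+1}\simeq (A/J)\otimes_{\OO_C}\OO_C(-C)$ as a sheaf on $C$. The $\OO_C$-rank of $A/J$ is $r^2/e$ — it is an $\OO_C$-order in $M_{r/e}(k(\tilde C))$ and $\dim_{k(C)}k(\tilde C) = e$ — so Riemann--Roch on the curve $C$ (exactly as in the proof of Lemma~\ref{bimodulechiformula}) gives
\[
\chi(J^e/J^{e+1}) = \chi(A/J) + \frac{r^2}{e}\deg_C\OO_C(-C) = \chi(A/J) - \frac{r^2}{e}C^2 .
\]
Comparing with $\chi(J^e/J^{e+1}) = \chi(A/J) + e\,\delta$ yields $\delta = -\dfrac{r^2}{e^2}C^2$.

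Finally I would assemble the pieces: $\sum_{i=0}^{e-1}\chi(J^i/J^{i+1}) = e\,\chi(A/J) + \delta\,\dfrac{e(e-1)}{2} = e\,\chi(A/J) - \dfrac{(e-1)r^2}{2e}C^2$. Substituting the formula for $\chi(A/J)$ proved just above, namely $\chi(A/J) = \dfrac{r^2}{2e}\bigl(2\chi(\OO_C) - C\cdot\Delta_A + (1-1/e)C^2\bigr)$, gives $e\,\chi(A/J) = \dfrac{r^2}{2}\bigl(2\chi(\OO_C) - C\cdot\Delta_A\bigr) + \dfrac{(e-1)r^2}{2e}C^2$; the two $C^2$-terms cancel and the corollary follows. (When $A$ is unramified at $C$ one has $e = 1$, the sum has the single term $\chi(A/J) = \chi(A\otimes\OO_C)$, and the assertion is immediate from the formula for $\chi(A/J)$.)

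Beyond this bookkeeping the one point that needs care is the identification $J^e/J^{e+1}\simeq(A/J)\otimes_{\OO_C}\OO_C(-C)$ and, more precisely, the fact that the bimodule isomorphism of Lemma~\ref{Jsheaves} persists when $i + k = e$, i.e.\ that $J^{e-1}/J^e\otimes_{A/J}J/J^2\simeq J^e/J^{e+1}$, so that the arithmetic progression really extends that far. This rests on $J$ being locally generated by a regular normal element — the property recorded in the local computations of the previous subsection — which is what guarantees the tensor powers of $J/J^2$ behave as invertible bimodules; this is the step I would be most careful to spell out.
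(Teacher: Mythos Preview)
Your proof is correct and follows essentially the same approach as the paper: both use Lemma~\ref{Jsheaves} and Lemma~\ref{bimodulechiformula} to see that $\chi(J^i/J^{i+1})$ is an arithmetic progression in $i$, identify $J^e/J^{e+1}\simeq (A/J)\otimes\OO_Z(-C)$ to compute the common difference $\delta=-r^2C^2/e^2$, and then sum and substitute the Proposition's formula for $\chi(A/J)$. Your explicit framing in terms of an arithmetic progression is a bit cleaner than the paper's direct term-by-term summation, but the content is identical; your caution about extending Lemma~\ref{Jsheaves} to $i+k=e$ is already covered by that lemma as stated.
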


\begin{proof}
  By Lemma \ref{Jsheaves}, we have $( J / J^2 )^{\otimes e} \simeq J^e / J^{e
  + 1} \simeq A / J \otimes_Z \OO_Z ( - C )$. By Lemma
  \ref{bimodulechiformula}, we have $\chi ( A / J \otimes_Z \OO_Z ( - C ) ) =
  \chi ( ( J / J^2 )^{\otimes e} ) = e \chi ( J / J^2 ) - ( e - 1 ) \chi ( A /
  J )$. Note that the rank of $A / J$ as an $\OO_C$-module is $r^2 / e$, so
  $\chi ( A / J \otimes_Z \OO_Z ( - C ) ) = \chi ( A / J ) - r^2 C^2 / e$.
  Putting these together, we get
  \begin{eqnarray*}
    \chi ( J / J^2 ) & = & - \frac{r^2 C^2}{e^2} + \chi ( A / J ) .
  \end{eqnarray*}
  Recall (\ref{firstformula}) from the beginning of this section,
  \begin{eqnarray*}
    \chi ( A \otimes_Z \OO_C ) & = & \chi ( J^{e - 1} / J^e ) + \chi ( J^{e -
    2} / J^{e - 1} ) + \cdots + \chi ( J / J^2 ) + \chi ( A / J )\\
    & = & \chi ( A / J ) + \sum_{i = 1}^{e - 1} \left( i \chi ( J / J^2 ) - (
    i - 1 ) \chi ( A / J ) \right)\\
    & = & \chi ( A / J ) + \frac{e ( e - 1 )}{2}  \left( - \frac{r^2
    C^2}{e^2} + \chi ( A / J ) \right) - \frac{( e - 1 ) ( e - 2 )}{2} \chi (
    A / J )\\
    & = & - \frac{e - 1}{2 e} r^2 C^2 + \left( 1 + \frac{e ( e - 1 )}{2} -
    \frac{( e - 1 ) ( e - 2 )}{2} \right) \chi ( A / J )\\
    & = & - \frac{e - 1}{2 e} r^2 C^2 + e \chi ( A / J )\\
    & = & \frac{r^2}{2} \left( - ( 1 - \frac{1}{e} ) C^2 + \left( 2 \chi (
    \OO_C ) - C \cdot \Delta_A + \left( 1 - \frac{1}{e} \right) C^2 \right)
    \right)\\
    & = & \frac{r^2}{2} \left( 2 \chi ( \OO_C ) - C \cdot \Delta_A \right)
  \end{eqnarray*}
\end{proof}

\subsection{Proof of Theorem \ref{orderadjunction}}

It remains to prove Theorem \ref{orderadjunction} in the case of a general
effective divisor $E$.

\begin{lemma}
  \label{chiformulaforE}Let $E = n_1 E_1 + \ldots + n_s E_s$ be an effective
  divisor on $Z$ and $V$ be a rank $r$ vector bundle on $Z$. Then
  \begin{eqnarray}
    \chi ( V \otimes_Z \OO_E ) & = & - \frac{r E^2}{2} + \sum_{i = 1}^s n_i
    \left( \frac{r E_i^2}{2} + \chi ( V \otimes_Z \OO_{E_i} ) \right) . 
    \label{chiformulaforEeq}
  \end{eqnarray}
\end{lemma}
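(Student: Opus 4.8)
The plan is to induct on the total multiplicity $n = n_1 + \cdots + n_s$ of $E$, stripping off one irreducible component at a time. The engine of the induction is the following short exact sequence. Suppose $E = E' + C$ where $C$ is one of the $E_i$ and $E'$ is again effective. From the chain of inclusions $\OO_Z(-E) = \OO_Z(-E')\otimes\OO_Z(-C) \subseteq \OO_Z(-E') \subseteq \OO_Z$ we obtain
\[ 0 \longrightarrow \OO_Z(-E')/\OO_Z(-E) \longrightarrow \OO_E \longrightarrow \OO_{E'} \longrightarrow 0, \]
and since $\OO_Z(-E')$ is locally free, tensoring the sequence $0\to\OO_Z(-C)\to\OO_Z\to\OO_C\to 0$ by it identifies $\OO_Z(-E')/\OO_Z(-E)$ with $\OO_C(-E')$. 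Tensoring the resulting sequence with the locally free sheaf $V$ preserves exactness, so taking Euler characteristics yields
\[ \chi(V\otimes_Z\OO_E) = \chi(V\otimes_Z\OO_C(-E')) + \chi(V\otimes_Z\OO_{E'}). \]
Now $V\otimes_Z\OO_C$ is a rank $r$ bundle on the curve $C$ and $\OO_C(-E')$ is a line bundle of degree $-E'\cdot C$, so Riemann--Roch on $C$ gives $\chi(V\otimes_Z\OO_C(-E')) = \chi(V\otimes_Z\OO_C) - r(E'\cdot C)$. This produces the recursion
\[ \chi(V\otimes_Z\OO_E) = \chi(V\otimes_Z\OO_C) - r(E'\cdot C) + \chi(V\otimes_Z\OO_{E'}). \]

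For the base case $n=1$ we have $E=E_i$ for a single index $i$, and the right-hand side of (\ref{chiformulaforEeq}) is $-rE_i^2/2 + \bigl(rE_i^2/2 + \chi(V\otimes_Z\OO_{E_i})\bigr) = \chi(V\otimes_Z\OO_{E_i})$, as wanted. For the inductive step, assume $n_1 \geq 1$ and apply the recursion with $C = E_1$ and $E' = (n_1-1)E_1 + n_2E_2 + \cdots + n_sE_s$, which is effective of total multiplicity $n-1$. Substituting the inductive hypothesis for $\chi(V\otimes_Z\OO_{E'})$, the terms $\chi(V\otimes_Z\OO_{E_i})$ with $i\geq 2$ survive with coefficient $n_i$, and the single copy of $\chi(V\otimes_Z\OO_{E_1})$ coming from $\chi(V\otimes_Z\OO_C)$ combines with the $(n_1-1)$ copies from the hypothesis to give coefficient $n_1$. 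The purely numerical terms reduce to the identity $E^2 = (E')^2 + 2(E'\cdot E_1) + E_1^2$, which holds because $E = E' + E_1$. This completes the induction.

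I do not anticipate a genuine obstacle here: the argument is entirely formal once the right short exact sequence is in place, and works for any rank $r$ vector bundle $V$ (indeed the identity does not involve $K_Z$ or the Chern data of $V$ except through the $\chi(V\otimes_Z\OO_{E_i})$). The only points needing a little care are the identification $\OO_Z(-E')/\OO_Z(-E)\cong\OO_C(-E')$ and the check that the quadratic terms telescope correctly via $E^2 = (E')^2 + 2(E'\cdot E_1) + E_1^2$.
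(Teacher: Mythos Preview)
Your proof is correct and follows essentially the same route as the paper: both induct on the number of irreducible components (counted with multiplicity), peel off one component $C=E_j$ using the short exact sequence $0 \to V\otimes_Z\OO_C(-E') \to V\otimes_Z\OO_E \to V\otimes_Z\OO_{E'} \to 0$, and reduce the step to the identity $\chi(V\otimes_Z\OO_C(-E')) = \chi(V\otimes_Z\OO_C) - r(E'\cdot C)$. Your derivation is a bit more explicit about the origin of the exact sequence and the use of Riemann--Roch on $C$, but the argument is the same.
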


\begin{proof}
  We prove (\ref{chiformulaforEeq}) by induction. For irreducible $E$, there
  is nothing to prove. Suppose (\ref{chiformulaforEeq}) holds for $E$, we show
  that it holds too for $E + E_j$. Using the following exact sequence
  \[ 0 \longrightarrow V \otimes_Z \OO_{E_j} ( - E ) \longrightarrow V
     \otimes_Z \OO_{E + E_j} \longrightarrow V \otimes_Z \OO_E \longrightarrow
     0 \]
  we obtain
  \begin{eqnarray*}
    \chi ( V \otimes_Z \OO_{E + E_j} ) & = & \chi ( V \otimes_Z \OO_{E_j} ( -
    E ) ) - \frac{r E^2}{2} + \sum_{i = 1}^s n_i \left( \frac{r E_i^2}{2} +
    \chi ( V \otimes_Z \OO_{E_i} ) \right) .
  \end{eqnarray*}
  Since $\chi ( V \otimes_Z \OO_{E_j} ( - E ) ) = \chi ( V \otimes_Z \OO_{E_j}
  ) - r E_j \cdot E$, we get
  \begin{eqnarray*}
    \chi ( V \otimes_Z \OO_{E + E_j} ) & = & \chi ( V \otimes_Z \OO_{E_j} ) +
    \frac{r E_j^2}{2} - \frac{r ( E + E_j )^2}{2} + \sum_{i = 1}^s n_i \left(
    \frac{r E_i^2}{2} + \chi ( V \otimes_Z \OO_{E_i} ) \right)
  \end{eqnarray*}
  which shows that (\ref{chiformulaforEeq}) holds for $E + E_j$. 
\end{proof}

Theorem \ref{orderadjunction} follows then directly from Corollary
\ref{chiformulaforArestrictedtoC} and Lemma \ref{chiformulaforE}.
\begin{eqnarray*}
  \chi ( A \otimes_Z \OO_E ) & = & - \frac{r^2 E^2}{2} + \sum_{i = 1}^s n_i
  \left( \frac{r^2 E_i^2}{2} + \frac{r^2}{2} \left( 2 \chi ( \OO_{E_i} ) - E_i
  \cdot \Delta_A \right) \right)\\
  & = & \frac{r^2}{2} \left( - E^2 + \sum_{i = 1}^s n_i \left( E_i^2 - ( K_A
  + E_i ) E_i \right) \right)\\
  & = & - \frac{r^2}{2} ( K_A + E ) E
\end{eqnarray*}

\section{Concluding Remarks}

We have shown that the notion of numerical rationality includes many
interesting examples of orders which arise naturally in the context of
noncommutative birational geometry. Our definition is natural in that it does
not depend on the choice of resolution, nor does it depend on the choice of
representative in a Morita equivalence class (if the centre has rational
singularities). Moreover, the adjunction formula for orders makes it easy to
check whether an order is numerically rational.

\end{document}